\newtheorem{corollary}{Corollary}[section]
\newtheorem{lemma}[corollary]{Lemma}
\newtheorem{proposition}[corollary]{Proposition}
\newtheorem{theorem}[corollary]{Theorem}
\theoremstyle{definition}
\newtheorem{definition}[corollary]{Definition}
\newtheorem{remark}[corollary]{Remark}
\newtheorem*{acknowledgements}{\sc Acknowledgements}
\numberwithin{equation}{section}
\definecolor{ddorange}{rgb}{1,0.5,0}
\definecolor{ddcyan}{rgb}{0,0.2,1.0}
\def\Lin {\mathop {\rm Lin}\nolimits}
\def\div {\mathop {\rm div}\nolimits}
\def\tr {\mathop {\rm tr}\nolimits}
\def\de {\mathrm{d}}
\def\R {\mathbb R}
\def\N {\mathbb N}
\def\M {\mathbb M^{n\times n}_{\rm sym}}
\def\C {\mathbb C}
\def\B {\mathbb B}
\def\EE {\mathrm{E}}
\begin{document}

\title[A rate-independent model for geomaterials under compression]{A rate-independent model for geomaterials under compression coupling strain gradient plasticity and damage
} 

\author[M. Caponi]{Maicol Caponi}
\address[Maicol Caponi]{Dipartimento di Matematica e Applicazioni ``R. Caccioppoli''
\newline\indent Università degli Studi di Napoli ``Federico II'' 
\newline\indent Via Cintia, Monte S. Angelo, 80126 Naples, Italy}
\email{maicol.caponi@unina.it}

\author[V. Crismale]{Vito Crismale}
\address[Vito Crismale]{Dipartimento di Matematica ``Guido Castelnuovo''
\newline\indent Sapienza Università di Roma 
\newline\indent Piazzale Aldo Moro 2, I-00185 Roma, Italy}
\email{vitocrismale@uniroma1.it}

\begin{abstract}
We study a strain gradient-enhanced version of a model for geomaterials under compression by Marigo and Kazymyrenko (2019) coupling damage and small-strain associative plasticity. We prove that the jumps in time of the plastic variable may happen only along jumps of the damage variable. Moreover, we perform a vanishing-viscosity analysis showing existence of Balanced Viscosity quasistatic solutions \emph{à la} Mielke-Rossi-Savaré.
\end{abstract}

\maketitle

\noindent
{\bf Keywords}: variational models, quasistatic evolution, energetic solutions, vanishing-viscosity,
Drucker-Prager elasto-plasticity, gradient damage models, complete damage, strain gradient plasticity.

\medskip

\noindent
{\bf MSC 2020}: 
74C05, 
74R05, 
74G65, 
35Q74, 
49J45. 


\maketitle


\section{Introduction}

In this paper we study a strain gradient-enhanced version of a model for geomaterials under compression by Marigo and Kazymyrenko~\cite{KazMar18} coupling damage and small-strain associative plasticity, proving a better regularity in time for the corresponding evolutions. 

The model of~\cite{KazMar18} is based on a micromechanical analysis, accounting for microcracks inside the material as the main responsible of the progressive loss of rigidity and then for the macroscopic elasto-plastic coupled behavior.

Given a \emph{reference domain} $\Omega\subset\R^n$ and denoting by $u\colon \Omega\to \R^n$ the \emph{displacement field}, in the general framework of small-strain elasto-plasticity the total strain $\EE u = (\nabla u)^{\mathrm{sym}} \in \M$ is additively decomposed into \emph{elastic strain} and \emph{plastic strain}:
$$\EE u=e+p$$
and the \emph{stress tensor} $\sigma\colon \Omega\to \M$ depends linearly on $e$ according to Hooke's law 
$$\sigma=\C e,$$
where $\C$ is the fourth order positive definite \emph{Hooke's tensor}, with $\sigma$ lying in a fixed closed and convex set $K \subset \M$ (the \emph{constraint set}) and satisfying equilibrium conditions involving the external loads.

In order to describe the model in~\cite{KazMar18}, consider the reference case of a triaxial test - a shear compression plus a compression normal to the shear plane - and the observed phenomenon of \emph{dilatance}, i.e., the irreversible increase of volume in compression. A micromechanical justification is that in the shear compression the volume may increase with that of the internal voids between the granuli, interpreted as microcracks, due to a less efficient particle organization; conversely, in the normal compression a part of these microcracks may be closed, preventing a free sliding and thus the free relaxation to the initial configuration in unloading.

Then a \emph{damage variable} $\alpha\colon [0,T]\times\Omega\to [0,1]$ is considered in~\cite{KazMar18}, related to the density
of closed microcracks and reflecting into an internal blocked energy depending on plastic strain, 
being related
to irreversible deformations.  
Assuming Coulomb law for the sliding with friction between the lips of closed microcracks, plasticity follows a standard Drucker-Prager criterion with an associative flow rule, that is the plastic flow is normal to the constraint set $K$.
Moreover, damage is assumed to be an irreversible process (consequently $\alpha$ is nonincreasing in time) and the growth of the microcracks is modelled by the terms usually present in gradient damage models
$$D(\alpha(t))+ l_\alpha \|\nabla \alpha(t)\|_{L^2}^2,$$
$l_\alpha>0$, cf.~\cite{CRASII},~\cite[Section~4]{AMV},~\cite[Remark~3.1]{KazMar18}.

In~\cite{Cr3} the existence of quasistatic evolutions for such a model is proven, fulfilling the notion of \emph{energetic solutions} by Mielke and Theil~\cite{MieThe99MMRI}: this is based on a \emph{global stability condition}, which prescribes at each time the minimization, among the admissible configurations, of the total internal energy plus the dissipation potential, and on an \emph{energy-dissipation balance} between the total variation in time of the internal energy, the total dissipated energy, and the work of the external loadings. 

In~\cite{UllWamAleSamDegFra21} the model has been enriched by including, still employing an irreversible degradation function $\alpha$, also the tensile response. This accounts for the fact that the evolution of opening microcracks in tension leads to brittle behavior and mode I fracture. Conversely, the response in compression shares many features with~\cite{KazMar18}, but plasticity is required to satisfy a non-associative flow rule.

The present contribution focuses on a different modification of the model in~\cite{KazMar18}, obtained by formally including in the internal energy the $L^2$ norm of the plastic strain gradient, times a dimensional (length) constant. Terms involving higher order derivatives are considered on the one hand by their capability to capture the correct scale size of shear bands, usually observed in failure in soils induced by shearing, and on the other hand to avoid mesh dependence in numerical simulations.
Inspired by the work of Aifantis~\cite{Aif84}, several plastic strain gradient models have been proposed, for both crystalline and granular materials (e.g.~\cite{FleHut01, EGB03, Peerlings07, Forest09, PohPeeGeeSwa11, AnaAslChe12, MAM13, ZreKal16}).

The framework of our strain gradient enhancement is that of the \emph{explicit gradient theories}, in which the gradient terms typically evolve according to ordinary differential equations determined by constitutive relations; in particular we follow the lines of~\cite{FleHut01, Forest09, PohPeeGeeSwa11}. 

Therefore, the stored energy at time $t$ is the sum of the elastic energy of the sound material, of the kinematical hardening term (depending on $\alpha$), and of the strain gradient term
\begin{equation*}
\frac{1}{2}\int_{\Omega} \C e(t,x) : e(t,x) \,\de x + \int_\Omega \B(\alpha(t,x)) p(t,x) \colon p(t,x) \,\de x + l_p \|\nabla p(t)\|_{L^2}^2,
\end{equation*}
for $\B$ a fourth-order tensor positive semidefinite (usually positive definite except for $\alpha=0$). The gradient term, which enforces the plasticity to be in $H^1$ (in space), introduces a characteristic internal length related to the scale of the observed shear bands.  In this setting, the plastic dissipation from a plastic strain $q$ is the functional
$$\int_\Omega H(p(t,x)-q(x))\, \de x,\qquad H(\xi)=\sup_{\sigma\in K} \sigma \colon \xi.$$
We observe that in the original model $p(t)$ is in general not in $L^2$ in the set $\{\alpha=0\}$, so shear bands with infinitesimal thickness may therein display. Therefore, in~\cite{Cr3} the dissipation from a plastic strain $q$ is the relaxation of the functional above for $p(t)$ a  bounded Radon measure. 

In the present setting, the additional regularity of the plastic strain with respect to~\cite{Cr3} makes the existence of energetic solutions much easier to prove; the core of the paper is the study of the regularity in time of the evolutions.
In fact, with general nonconvex total energies, evolutions may display jumps in time. Basing on standard techniques we show in Lemma~\ref{lem:continuity} that energetic solutions, whose existence is proven in  Theorems~\ref{thm:glob_qe}, are strongly continuous outside an at most countable set of time instants.

Further, we develop a more precise analysis exploiting the particular structure of the model: we prove that the variations in time of the whole set of variables with respect to the target norms are controlled in terms of the variation in time of the damage in the $L^1(\Omega)$-norm, plus that of the external loading (cf.\ Theorem~\ref{thm:cont_time} for the precise statement). This ensures that, as soon as damage does not jump, the whole evolution, and in particular the plastic strain, has no time discontinuities.
Our analysis is in the spirit of~\cite{DMDSMo}, which proves that small strain perfect plasticity is absolutely continuous in time.

The interplay between damage and plastic strain is a key point in the analysis of coupled damage-plasticity models. We consider, for instance, the uniaxial responses in~\cite{AMV}, a reference work for the variational analysis of such coupling, and in~\cite{KazMar18}: in~\cite{AMV} it is shown that strain concentrates into  `cohesive crack points' where the space derivative of the damage is discontinuous; in~\cite{KazMar18}, assuming solutions smooth in time and a linear $D$, it is observed that damage evolves only with plasticity.

By inspecting our proof it is apparent that the strain gradient regularization is crucial for the improved regularity. This could suggest, in analogous cases with gradient plasticity, to carefully verify if jumps in time of plasticity but not of damage are analytically ruled out and if this is in accordance with the expected experimental results.  

In the final part of the paper we study the existence of quasistatic evolution obtained through a vanishing viscosity approximation, satisfying the notion of Balanced Viscosity (BV) solutions, see Definition~\ref{def:BV_sol}.

BV solutions, which have been described in an ‘abstract’ context in~\cite{EfMi} and the subsequent~\cite{MieRosSav12, MieRosSav13}, are obtained trough the approximation of a rate-independent system by viscously perturbed system and provide a selection criterion for solutions with a mechanically feasible behavior at jumps, as well as a description of trajectories joining states before and after jumps. In fact,
the notion of energetic solutions is not satisfactory from the physical point of view, whenever jumps develop in time: these jumps may appear ‘too long and too early’, being forced to overtake energy barriers between energy wells (see the characterization in~\cite{RosSav12} and the references therein).


The vanishing-viscosity technique has also  been adopted in various concrete applications, ranging from plasticity  (cf., e.g.,~\cite{DMDMM08, DMSo, DMDSSo, BabFraMor12, FrSt2013, Sol14}), to damage,  fracture, and fatigue  (see for instance~\cite{KnMiZa08ILMC, Lazzaroni-Toader, KRZ, CrLa, AlmLazLuc19, ACO2018, CrRo19}).

We show existence of BV solutions (cf.\ Theorem~\ref{thm:BV_evol}), mostly taking advantage of the analysis carried out for the regularity result. Arguing similarly, we show that as soon as the damage variable is regularized in time with respect to the $L^2$-norm (in space), then all the evolution gains the same regularity in time, in the energy space.
Moreover, using an argument from~\cite{KRZ, CrLa}, we get that the evolution is absolutely continuous in time uniformly respect to the viscosity parameter.
Eventually, such regularity permits follow the lines of~\cite{CrLa} to conclude existence of Balanced Viscosity solutions. 

The paper is organised as follows: In Section~\ref{sec:prel} we fix the notation used throughout the paper and we introduced the mathematical framework of our model. In Section~\ref{sec:gl_qe} we first prove in Theorem~\ref{thm:glob_qe} the existence of global quasistatic evolutions, and then in Theorem~\ref{thm:cont_time} we investigate their regularity in time. We continue with Section~\ref{sec:ap_ve}, where we introduced a $\varepsilon$-regularization in time with respect to the $L^2$-norm in space of the damage variable, and in Theorem~\ref{thm:eap_vis_ev} we prove the existence of $\varepsilon$-approximate viscous evolution. Finally, in Section~\ref{sec:res_qvs} we prove the existence of BV solutions (see Theorem~\ref{thm:BV_evol}). 

\section{Preliminaries}\label{sec:prel}

\subsection{Notation} The space of $n\times n$ symmetric matrices with real entries is denoted by $\M$. Given two matrices $A_1,A_2\in \M$, their scalar product is denoted by $A_1\colon A_2$ and the induced norm of $A\in\M$ by $|A|$. The identity matrix is denoted by $I \in \M$. 

For a measurable set $B \subset \R^n$ we use $\mathcal L^n(B)$ to denote the $n$-dimensional Lebesgue measure of $B$. By $\chi_B\colon \R^n \to \{ 0 ,1 \}$ we denote the corresponding characteristic function.  Given an open subset $\Omega$ of $\R^n$, the set of all distributions on $\Omega$, namely the continuous dual space of $C_c^\infty (\Omega)$, endowed with the strong dual topology, is denoted by $\mathcal D'(\Omega)$. We adopt standard notation for Lebesgue spaces on measurable subsets $B\subset\mathbb R^n$ and Sobolev spaces on open subsets $\Omega\subset\mathbb R^n$. According to the context, we use $\|\cdot\|_{L^p(B)}$ to denote the norm in $L^p(B)$ for all $1\le p\le\infty$. A similar convention is also used to denote the norms in Sobolev spaces. The boundary values of a Sobolev function are always intended in the sense of traces.

The partial derivatives with respect to the variable $x_i$ are denoted by $\partial_i$. Given an open subset $\Omega\subset\mathbb R^n$ and a function $u\colon\Omega\to\R^d$, we denote its Jacobian matrix by $\nabla u$, whose components are $(\nabla u)_{ij}\coloneqq\partial_j u_i$ for $i=1,\dots,d$ and $j=1,\dots,n$. For a function $u\colon \Omega \to \R^n$ we use $\EE u\coloneqq(\nabla u)^{\mathrm{sym}}=\frac{1}{2}(\nabla u+\nabla u^T)\in\M$ to denote the symmetric part of the gradient. Given a tensor field $F\colon \Omega\to\M$, by $\div F$ we mean its divergence with respect to lines, namely $(\div F)_i\coloneqq\sum_{j=1}^n\partial_jF_{ij}$ for $i=1,\dots,n$. 

\subsection{Mathematical framework}
We fix the mathematical framework of the model of complete damage and strain gradient plasticity presented in the introduction. For simplicity, all the physical constants are fixed to 1.

Throughout the paper the {\it reference configuration} $\Omega$ is a bounded domain with Lipschitz boundary in $\R^n$, where $n=2,3$. The {\it displacement field} is represented by a function $u\in H^1(\Omega;\R^n)$, and we decompose its {\it total strain} as $\EE u=e+p$, where $e$ denotes the {\it elastic strain} and $p$ the {\it plastic strain}. To simplify the notation, we set
$$\mathcal A\coloneqq H^1(\Omega;\R^n)\times L^2(\Omega;\M)\times H^1(\Omega;\M).$$ Given a function $w\in H^1(\Omega;\R^n)$, the set of admissible displacements and strains for the boundary datum $w$ on $\partial\Omega$ is defined as
\begin{align*}
\mathcal A(w)\coloneqq \{(u,e,p)\in \mathcal A\,:\,\text{$\EE u=e+p$ in $\Omega$, $u=w$ on $\partial\Omega$}\}.
\end{align*}

\begin{remark}
In this model we require a higher regularity in the plastic strain $p$, which usually is only a Radon measure. This property is ensured by the addition in the total energy~\eqref{eq:tot_en} of the $L^2$ norm of the plastic strain gradient term.
\end{remark}

We fix $T>0$. Let us consider a time-dependent boundary displacement $w$ satisfying
\begin{equation}\label{eq:w}
w\in H^1((0,T);H^1(\Omega;\R^n)).
\end{equation}
The damage variable is a function $\alpha\in H^1(\Omega;[0,1])$. Given $\alpha^0\in H^1(\Omega;[0,1])$ we set
\begin{equation*}
\mathcal D(\alpha^0)\coloneqq \{\alpha\in H^1(\Omega;[0,1])\,:\,\alpha\le \alpha^0\},
\end{equation*}
so that 
\begin{equation*}
\mathcal D(\alpha^2)\subset \mathcal D(\alpha^1)\quad\text{for all $\alpha^2\in\mathcal D(\alpha^1)$}.
\end{equation*}
The {\it energy dissipated in damage growth} is the functional $D\colon H^1(\Omega;[0,1])\to [0,\infty)$ given by
\begin{equation*}
D(\alpha)\coloneqq \int_\Omega d(\alpha(x)) \,\de x\quad\text{for all $\alpha\in H^1(\Omega;[0,1])$},
\end{equation*}
where $d\colon [0,1]\to\R$ satisfies
\begin{align}
&d\in C^{1,1}([0,1];[0,\infty)),\qquad \dot d(0)\le 0\label{eq:d}
\end{align}
(notice that the assumption $\dot d(0)\le 0$ is compatible with the ones proposed in~\cite{KazMar18}).
The {\it elastic energy} is the functional $\mathcal Q\colon L^2(\Omega;\M)\to [0,\infty)$ given by
\begin{equation*}
\mathcal Q(e)\coloneqq \frac{1}{2}\int_\Omega\C e(x):e(x) \,\de x\quad\text{for all $e\in L^2(\Omega;\M)$},
\end{equation*}
where $\C\in \Lin(\M;\M)$ satisfies
\begin{align}
&\C_{ijkl}=\C_{klij}=\C_{jikl}\quad\text{for all $i,j,k,l=1,\dots,n$},\label{eq:C1}\\
&\gamma_1|\xi|^2\le \C\xi:\xi\le\gamma_2|\xi|^2\quad\text{for all $\xi\in\M$},\label{eq:C2}
\end{align}
with constants $\gamma_1,\gamma_2>0$. In particular, it holds
\begin{equation*}
|\C\xi|\le \gamma_2 |\xi|\quad\text{for all $\xi\in\M$}.
\end{equation*}
The {\it kinematical hardening term} is the functional $\tilde Q\colon H^1(\Omega;[0,1])\times H^1(\Omega;\M)\to [0,\infty)$ given by
\begin{equation*}
\tilde Q(\alpha,p)\coloneqq \int_\Omega \B(\alpha(x))p(x):p(x) \,\de x\quad\text{for all $\alpha\in H^1(\Omega;[0,1])$ and $p\in H^1(\Omega;\M)$},
\end{equation*}
where $\B\colon [0,1]\to \Lin(\M;\M)$ satisfies
\begin{align}
&\B\in C^{1,1}([0,1];\Lin(\M;\M)),\qquad \dot{\B}(0)\xi:\xi\le 0\quad\text{for all $\xi\in\M$},\label{eq:B1}\\
&\B_{ijkl}(\alpha)=\B_{klij}(\alpha)=\B_{jikl}(\alpha)\quad\text{for all $i,j,k,l=1,\dots,n$ and $\alpha\in [0,1]$},\label{eq:B2}\\
&\B(\alpha)\xi:\xi\ge 0\quad\text{for all $\alpha\in[0,1]$ and $\xi\in\M$}.\label{eq:B3}
\end{align}

\begin{remark}\label{rem:BCS}
Thanks to~\eqref{eq:B2} and~\eqref{eq:B3}, we derive the Cauchy-Schwartz Inequality \begin{equation*}
|\B(\alpha)\xi_1:\xi_2|^2\le (\B(\alpha)\xi_1:\xi_1)(\B(\alpha)\xi_2:\xi_2)\quad\text{for all $\alpha\in[0,1]$ and $\xi_1,\xi_2\in\M$}.
\end{equation*}
\end{remark}
Define the {\it total energy} $\mathcal E\colon H^1(\Omega;[0,1])\times L^2(\Omega;\M)\times H^1(\Omega;\M)\to [0,\infty)$ as
\begin{equation}\label{eq:tot_en}
\mathcal E(\alpha,e,p)\coloneqq \mathcal Q(e)+D(\alpha)+\|\nabla\alpha\|_2^2+\tilde Q(\alpha,p)+\|\nabla p\|_2^2.
\end{equation}

\begin{remark}\label{rem:emb}
By the Sobolev Embedding Theorem, for $n=2,3$ and $d\in\N$ we have that the embedding $H^1(\Omega;\R^d)\hookrightarrow L^\theta(\Omega;\R^d)$ is continuous for all $\theta\in[1,6]$ and compact for all $\theta\in[1,6)$. In particular, by~\eqref{eq:B1} for all $\alpha\in H^1(\Omega;[0,1])$, $\beta\in H^1(\Omega)$, and $p,q\in H^1(\Omega;\M)$ we have:
\begin{align*}
&\left|\int_\Omega \B(\alpha(x))p(x):q(x) \,\de x\right|\le \|\B\|_\infty\|p\|_2\|q\|_2,\\
&\left|\int_\Omega \dot{\B}(\alpha(x))\beta(x)p(x):q(x) \,\de x\right|\le \|\dot{\B}\|_\infty\|\beta\|_2\|p\|_4\|q\|_4,\\
&\left|\int_\Omega [\B(\alpha_1(x))-\B(\alpha_2(x))]p(x):q(x) \,\de x\right|\le \|\dot{\B}\|_\infty\|\alpha_1-\alpha_2\|_2\|p\|_4\|q\|_4,\\
&\left|\int_\Omega [\dot{\B}(\alpha_1(x))-\dot{\B}(\alpha_2(x))]\beta(x)p(x):q(x) \,\de x\right|\le \|\ddot{\B}\|_\infty\|\alpha_1-\alpha_2\|_4\|\beta\|_4\|p\|_4\|q\|_4.
\end{align*}
\end{remark}

The {\it constraint set} for the stress tensor $\sigma$ is identified by a closed and convex set  $K\subset\M$ satisfying
\begin{equation}\label{eq:K}
\{\sigma\in\M\,:\,|\sigma|\le r_H\}\subset K
\end{equation}
for a radius $r_H> 0$. 

\begin{remark}
Notice that $K$ can be unbounded. In particular, as in~\cite{Cr3}, we can consider constraint set of the form 
$$K\coloneqq\{\sigma\in\M\,:\,\tau\sigma_m+|\sigma_D|-\kappa\ge 0\}$$
for $\tau,\kappa>0$, where $\sigma_m\coloneqq\frac{\tr\sigma}{n}\in\R$ is the {\it mean stress} and $\sigma_D\coloneqq\sigma-\sigma_mI$ is the {\it deviatoric part} of $\sigma$.
\end{remark}

Let us consider the support function $H\colon\M\to [0,\infty]$ of $K$, which is defined as
\begin{equation*}
H(\xi)\coloneqq \sup_{\sigma\in K}\sigma:\xi\quad\text{for all $\xi\in\M$}.
\end{equation*}
Clearly $H$ is convex, lower semicontinuous, and positively 1-homogeneous. Moreover
\begin{equation}\label{eq:rH}
r_H|\xi|\le H(\xi)\quad\text{for all $\xi\in\M$}.
\end{equation}
The {\it plastic potential} $\mathcal H\colon L^1(\Omega;\M)\to [0,\infty]$ is defined as 
\begin{equation*}
\mathcal H(p)\coloneqq \int_\Omega H(p(x)) \,\de x\quad\text{for all $p\in L^1(\Omega;\M)$}.
\end{equation*}

\begin{lemma}\label{lem:lsc}
The functional $\mathcal H$ is lower semicontinuous in $L^1(\Omega;\M)$.
\end{lemma}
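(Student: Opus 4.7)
The plan is the standard subsequence + Fatou argument for nonnegative convex integrands. The two key facts from the setup are that $H\colon \M\to[0,\infty]$ is lower semicontinuous (being a supremum of linear functions) and $H\ge 0$ everywhere; that is all we need, the 1-homogeneity and the bound~\eqref{eq:rH} play no role here.

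First I would take an arbitrary sequence $p_k\to p$ in $L^1(\Omega;\M)$ and aim to show $\mathcal H(p)\le \liminf_k \mathcal H(p_k)$. Without loss of generality assume $\liminf_k \mathcal H(p_k)=\ell<\infty$ (otherwise nothing to prove) and pass to a subsequence $p_{k_j}$ realizing this liminf. Since $p_{k_j}\to p$ in $L^1(\Omega;\M)$, a further (not relabelled) subsequence converges to $p$ almost everywhere on $\Omega$.

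Now by the lower semicontinuity of $H$,
\begin{equation*}
H(p(x))\le \liminf_{j\to\infty} H(p_{k_j}(x))\quad\text{for a.e.\ $x\in\Omega$.}
\end{equation*}
Because $H\ge 0$, I can apply Fatou's lemma to the nonnegative functions $x\mapsto H(p_{k_j}(x))$:
\begin{equation*}
\mathcal H(p)=\int_\Omega H(p(x))\, \de x\le \int_\Omega \liminf_{j\to\infty} H(p_{k_j}(x))\, \de x \le \liminf_{j\to\infty}\int_\Omega H(p_{k_j}(x))\, \de x=\ell.
\end{equation*}
This chains the desired inequality and concludes the proof.

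There is no serious obstacle: the only subtlety is that lower semicontinuity of $H$ only yields an a.e.\ pointwise liminf inequality along an a.e.-convergent subsequence, so one must pass to a subsequence chosen to realize the liminf of $\mathcal H(p_k)$ \emph{before} extracting the a.e.\ convergent sub-subsequence. No uniform integrability hypothesis is needed because $H$ is nonnegative, so Fatou applies directly; in particular the argument is insensitive to whether $K$ (and hence $H$) is bounded.
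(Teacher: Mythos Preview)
Your proof is correct and follows essentially the same route as the paper's: extract a subsequence realizing the $\liminf$, pass to a further a.e.-convergent subsequence, then combine the pointwise lower semicontinuity of $H$ with Fatou's lemma for nonnegative integrands. The only cosmetic difference is that the paper extracts a single subsequence serving both purposes at once, and does not bother with the (harmless but unnecessary) reduction to $\ell<\infty$.
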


\begin{proof}
Let $(p_k)_k\subset L^1(\Omega;\M)$ and $p\in L^1(\Omega;\M)$ be such that $p_k\to p$ in $L^1(\Omega;\M)$. We consider a subsequence $(p_{k_j})_j\subset (p_k)_k$ satisfying
$$\lim_{j\to\infty}\mathcal H(p_{k_j})=\liminf_{k\to\infty}\mathcal H(p_k),\qquad p_{k_j}(x)\to p(x)\quad\text{for a.e.\ $x\in\Omega$}.$$
Since $H\colon\M\to [0,\infty]$ is lower semicontinuous, by Fatou Lemma we have
$$\mathcal H(p)=\int_\Omega H(p(x))\,\de x\le \int_\Omega \liminf_{j\to\infty}H(p_{k_j}(x))\,\de x\le \liminf_{j\to\infty}\int_\Omega H(p_{k_j}(x))\,\de x=\lim_{j\to\infty}\mathcal H(p_{k_j})=\liminf_{k\to\infty}\mathcal H(p_k),$$
which implies the lower semicontinuous of $\mathcal H$ in $L^1(\Omega;\M)$.
\end{proof}

\begin{remark}\label{rem:lsc}
Since $\mathcal H\colon L^1(\Omega;\M)\to [0,\infty]$ is convex, from Lemma~\ref{lem:lsc} we derive that $\mathcal H$ weakly (sequentially) lower semicontinuous in $L^1(\Omega;\M)$. By arguing in the same way, we derive that also the functional $p\mapsto \int_{t_1}^{t_2}\mathcal H(p(t))\,\de t$ is lower semicontinuous in $L^1([t_1,t_2];L^1(\Omega;\M))$ for all $[t_1,t_2]\subset[0,T]$. \end{remark}

Given $[t_1,t_2]\subset[0,T]$ and $p\colon [t_1,t_2]\to L^1(\Omega;\M)$, the {\it $\mathcal H$-dissipation of $p$} on $[t_1,t_2]$ is defined as
$$\mathcal V_{\mathcal H}(p;t_1,t_2)\coloneqq \sup\left\{\sum_{j=1}^N\mathcal H(p(s_j)-p(s_{j-1}))\,:\,N\in\N,\,t_1=s_0<s_1<\dots<s_{N-1}<s_N=t_2\right\}.$$
We recall that this notion has been introduced in~\cite[Appendix]{DMDSMo}. By~\eqref{eq:rH} we have
\begin{equation}\label{eq:rHvar}
r_H\mathcal V(p;t_1,t_2)\le \mathcal V_{\mathcal H}(p;t_1,t_2),
\end{equation}
where $\mathcal V(p;t_1,t_2)$ is the total variation of $p\colon [t_1,t_2]\to L^1(\Omega;\M)$, which is defined as
$$\mathcal V(p;t_1,t_2)\coloneqq \sup\left\{\sum_{j=1}^N\mathcal \|p(s_j)-p(s_{j-1})\|_1\,:\,N\in\N,\,t_1=s_0<s_1<\dots<s_{N-1}<s_N=t_2\right\}.$$

\begin{lemma}
Let $[t_1,t_1]\subset[0,T]$ and $p_k\colon [t_1,t_2]\to L^1(\Omega;\M)$, $k\in\N$, satisfy
$$p_k(r)\rightharpoonup p(r)\quad\text{in $L^1(\Omega;\M)$ as $k\to\infty$ for all $r\in[t_1,t_2]$}.$$
Then
\begin{equation}\label{eq:Hdis-lsc}
\mathcal V_{\mathcal H}(p;t_1,t_2)\le\liminf_{k\to\infty}\mathcal V_{\mathcal H}(p_k;t_1,t_2).    
\end{equation}
\end{lemma}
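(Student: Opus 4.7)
The plan is to reduce the inequality to the weak lower semicontinuity of $\mathcal{H}$ on $L^1(\Omega;\M)$, which is already granted by Lemma~\ref{lem:lsc} together with Remark~\ref{rem:lsc} (since $\mathcal{H}$ is convex). Once the pointwise weak convergence $p_k(r)\rightharpoonup p(r)$ is combined with the definition of $\mathcal{V}_{\mathcal H}$ as a supremum over finite partitions, the result becomes a standard ``Fatou-type'' argument on finite sums.

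Concretely, first I would fix an arbitrary partition $t_1=s_0<s_1<\dots<s_N=t_2$ of $[t_1,t_2]$. For each index $j\in\{1,\dots,N\}$, the linear combination $p_k(s_j)-p_k(s_{j-1})$ converges weakly in $L^1(\Omega;\M)$ to $p(s_j)-p(s_{j-1})$ as $k\to\infty$, by hypothesis. Applying the weak lower semicontinuity of $\mathcal{H}$ on $L^1(\Omega;\M)$ (Remark~\ref{rem:lsc}) to each of the $N$ increments gives
\begin{equation*}
\mathcal{H}\bigl(p(s_j)-p(s_{j-1})\bigr)\le\liminf_{k\to\infty}\mathcal{H}\bigl(p_k(s_j)-p_k(s_{j-1})\bigr)\qquad\text{for every } j.
\end{equation*}

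Next I would sum these $N$ inequalities and use the super-additivity of $\liminf$ (finite sums) together with the trivial bound $\sum_{j=1}^N\mathcal{H}(p_k(s_j)-p_k(s_{j-1}))\le\mathcal{V}_{\mathcal H}(p_k;t_1,t_2)$ that follows from the definition of the $\mathcal{H}$-dissipation. This yields
\begin{equation*}
\sum_{j=1}^N \mathcal{H}\bigl(p(s_j)-p(s_{j-1})\bigr)\le\liminf_{k\to\infty}\sum_{j=1}^N\mathcal{H}\bigl(p_k(s_j)-p_k(s_{j-1})\bigr)\le\liminf_{k\to\infty}\mathcal{V}_{\mathcal H}(p_k;t_1,t_2).
\end{equation*}
Since the right-hand side does not depend on the chosen partition, taking the supremum over all admissible partitions on the left produces exactly \eqref{eq:Hdis-lsc}.

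There is essentially no serious obstacle here, because the partition is finite and $\liminf$ is super-additive on finitely many sequences; the supremum in the definition of $\mathcal{V}_{\mathcal H}$ is taken after the passage to the limit, so no uniform bound on the number of partition points is required. The only subtle point worth double-checking is that $\mathcal{H}$ may take the value $+\infty$ on $L^1(\Omega;\M)$, but this does not affect the Fatou inequality nor the validity of Remark~\ref{rem:lsc}, since the convex lower-semicontinuous extended-real-valued functional $\mathcal{H}$ remains weakly sequentially lower semicontinuous on $L^1(\Omega;\M)$.
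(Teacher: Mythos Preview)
Your proof is correct and follows essentially the same approach as the paper: fix a finite partition, use the weak lower semicontinuity of $\mathcal{H}$ from Remark~\ref{rem:lsc} on each increment, sum via super-additivity of $\liminf$, bound by $\mathcal{V}_{\mathcal H}(p_k;t_1,t_2)$, and then take the supremum over partitions. Your remark about $\mathcal{H}$ possibly taking the value $+\infty$ is a nice extra check not explicitly made in the paper.
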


\begin{proof}
We fix $n\in\N$ and a collection of points $(s_j)_{j=0}^N$ such that
\begin{equation}\label{eq:partition}
t_1=s_0<s_1<\dots<s_{N-1}<s_N=t_2.   
\end{equation}
For all $j=1,\dots,n$ we have 
$$p_k(s_j)-p_k(s_{j-1})\rightharpoonup p(s_j)-p(s_{j-1})\quad\text{in $L^1(\Omega;\M)$ as $k\to\infty$}.$$
Therefore, by Remark~\ref{rem:lsc} \begin{align*}
\sum_{j=1}^N\mathcal H(p(s_j)-p(s_{j-1}))&\le \sum_{j=1}^N\liminf_{k\to\infty}\mathcal H(p_k(s_j)-p_k(s_{j-1}))\\
&\le \liminf_{k\to\infty}\sum_{j=1}^N\mathcal H(p_k(s_j)-p_k(s_{j-1}))\le \liminf_{k\to\infty}\mathcal V_{\mathcal H}(p_k;t_1,t_2).
\end{align*}
By taking the supremum over $N\in\N$ and all collections of points $(s_j)_{j=0}^N$ satisfying~\eqref{eq:partition} we get~\eqref{eq:Hdis-lsc}.
\end{proof}

We recall the following result.

\begin{lemma}\label{lem:Cdelta}
For all $\theta\in[2,6)$ and $\delta>0$ there exists a constant $C_{\theta,\delta}>0$ such that
\begin{equation*}
\|u\|_\theta^2\le C_{\theta,\delta}\|u\|_1^2+\delta\|\nabla u\|_2^2\quad\text{for all $u\in H^1(\Omega;\R^d)$}.
\end{equation*}
\end{lemma}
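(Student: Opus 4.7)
The statement is a standard Ehrling-type interpolation inequality, and the natural route is a compactness-contradiction argument based on the fact that the embedding $H^1(\Omega;\R^d)\hookrightarrow L^\theta(\Omega;\R^d)$ is compact for $\theta\in[2,6)$ when $n\in\{2,3\}$ (Remark~\ref{rem:emb}).

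The plan is to fix $\theta\in[2,6)$ and $\delta>0$ and suppose, for contradiction, that the claimed inequality fails for every constant. Then one obtains a sequence $(u_k)\subset H^1(\Omega;\R^d)$, necessarily with $u_k\neq 0$, such that
\begin{equation*}
\|u_k\|_\theta^2 > k\,\|u_k\|_1^2 + \delta\,\|\nabla u_k\|_2^2.
\end{equation*}
I would normalize by setting $v_k\coloneqq u_k/\|u_k\|_\theta$, so that $\|v_k\|_\theta=1$ and
\begin{equation*}
1 > k\,\|v_k\|_1^2 + \delta\,\|\nabla v_k\|_2^2.
\end{equation*}
This immediately yields $\|v_k\|_1\le 1/\sqrt{k}\to 0$ and $\|\nabla v_k\|_2\le 1/\sqrt{\delta}$.

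Next I would observe that since $\Omega$ is bounded and $\theta\ge 2$, Hölder's inequality gives $\|v_k\|_2\le |\Omega|^{1/2-1/\theta}\|v_k\|_\theta=|\Omega|^{1/2-1/\theta}$, hence $(v_k)$ is bounded in $H^1(\Omega;\R^d)$. By the compact embedding $H^1(\Omega;\R^d)\hookrightarrow L^\theta(\Omega;\R^d)$ (valid since $\theta<6$ and $n\le 3$), up to a subsequence $v_k\to v$ strongly in $L^\theta(\Omega;\R^d)$ for some $v\in L^\theta(\Omega;\R^d)$, and, extracting a further subsequence, also a.e.\ in $\Omega$. Passing the $L^\theta$ convergence to the $L^1$ side, or using that $\|v_k\|_1\to 0$ together with the a.e.\ convergence and Fatou's lemma, forces $v=0$ a.e.\ in $\Omega$.

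This contradicts $\|v\|_\theta=\lim_{k\to\infty}\|v_k\|_\theta=1$, concluding the proof. No step should be delicate: the only point to keep in mind is that the choice $\theta<6$ is what guarantees the compactness of the Sobolev embedding in dimension $n\in\{2,3\}$, which is the only ingredient beyond elementary normalization and Hölder's inequality; the bound $\theta\ge 2$ is used purely to compare $\|v_k\|_2$ with $\|v_k\|_\theta$ on the bounded domain $\Omega$.
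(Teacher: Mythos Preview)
Your proof is correct and follows essentially the same compactness-contradiction argument as the paper: normalize so that $\|v_k\|_\theta=1$, read off $\|v_k\|_1\to 0$ and $\|\nabla v_k\|_2$ bounded, use $\theta\ge 2$ on the bounded domain to bound $\|v_k\|_2$ (the paper phrases this as the continuous embedding $L^\theta\hookrightarrow L^2$, which is exactly your H\"older step), then invoke the compact embedding $H^1\hookrightarrow L^\theta$ to reach the contradiction $0=\|v\|_\theta=1$.
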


\begin{proof}
We fix $\theta\in[2,6)$ and $\delta>0$ and we argue by contradiction. We assume that for all $k\in \N$ there exists $u_k\in H^1(\Omega;\R^d)$ satisfying
\begin{equation*}
\|u_k\|_\theta^2> k\|u_k\|_1^2+\delta \|\nabla u_k\|_2^2.
\end{equation*}
We set $\hat u_k\coloneqq \frac{u_k}{\|u_k\|_\theta}$ and we have
\begin{equation*}
\|\hat u_k\|_\theta=1,\quad 1>k\|\hat u_k\|_1^2+\delta\|\hat \nabla u_k\|_2^2\quad\text{for all $k\in\N$}.
\end{equation*}
In particular, by the continuity of the embedding $L^\theta(\Omega;\R^d)\hookrightarrow L^2(\Omega;\R^d)$ we deduce the existence of a constant $C_\theta>0$ such that
$$\|\hat u_k\|_{H^1}^2=\|\hat u_k\|_2^2+\|\nabla \hat u_k\|_2^2\le C_\theta+\frac{1}{\delta}\quad\text{for all $k\in\N$}.$$
By the Sobolev Embedding Theorem there exists a function $\hat u\in H^1(\Omega;\R^d)$ with $\|\hat u\|_\theta=1$ and a subsequence $(\hat u_{k_j})_j\subset (\hat u_k)_k$ such that 
\begin{equation*}
\hat u_{k_j}\to \hat u\quad\text{in $L^\theta(\Omega;\R^d)$ as $j\to\infty$}.
\end{equation*}
On the other hand,
\begin{equation*}
    \|\hat u_k\|_1^2\le \frac{1}{k}\quad\text{for all $k\in\N$},
\end{equation*}
which gives
\begin{equation*}
\hat u_k\to 0\quad\text{in $L^1(\Omega;\R^d)$ as $k\to\infty$}.
\end{equation*}
This implies that $\hat u=0$, which contradicts $\|\hat u\|_\theta=1$.
\end{proof}

The following is an easy consequence of Lemma~\ref{lem:Cdelta}.

\begin{corollary}\label{coro:H1norm}
There exists a constant $C>0$ such that
\begin{equation}\label{eq:H1norm}
\|u\|_{H^1}^2\le C\left(\|u\|_1^2+\|\nabla u\|_2^2\right)\quad\text{for all $u\in H^1(\Omega;\R^d)$}.
\end{equation}
\end{corollary}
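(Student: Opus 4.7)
The plan is to simply invoke Lemma~\ref{lem:Cdelta} with the specific choice $\theta=2$ and any fixed $\delta>0$, for instance $\delta=1$. This immediately yields a constant $C_{2,1}>0$ such that
\begin{equation*}
\|u\|_2^2\le C_{2,1}\|u\|_1^2+\|\nabla u\|_2^2\quad\text{for all }u\in H^1(\Omega;\R^d).
\end{equation*}

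Adding $\|\nabla u\|_2^2$ to both sides and recalling that $\|u\|_{H^1}^2=\|u\|_2^2+\|\nabla u\|_2^2$, I obtain
\begin{equation*}
\|u\|_{H^1}^2\le C_{2,1}\|u\|_1^2+2\|\nabla u\|_2^2\le C\bigl(\|u\|_1^2+\|\nabla u\|_2^2\bigr),
\end{equation*}
with $C\coloneqq\max\{C_{2,1},2\}$, which is exactly the claim. There is no real obstacle here since Lemma~\ref{lem:Cdelta} has already done all the analytic work (the compactness argument hidden in the contradiction proof); the corollary is just a bookkeeping consequence obtained by specializing the exponent and absorbing the gradient term.
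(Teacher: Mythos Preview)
Your proof is correct and essentially identical to the paper's own argument: the paper also applies Lemma~\ref{lem:Cdelta} with $\theta=2$, $\delta=1$, adds $\|\nabla u\|_2^2$, and takes $C=\max\{2,C_1\}$.
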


\begin{proof}
By Lemma~\ref{lem:Cdelta} with $\theta=2$ and $\delta=1$ there exists a constant $C_1>0$ such that 
$$\|u\|_{H^1}^2=\|u\|_2^2+\|\nabla u\|_2^2\le C_1\|u\|_1^2+2\|\nabla u\|_2^2\le \max\{2,C_1\}\left(\|u\|_1^2+\|\nabla u\|_2^2\right),$$
which gives~\eqref{eq:H1norm}
\end{proof}


\section{Global quasistatic evolutions}\label{sec:gl_qe}

In this section we prove Theorem~\ref{thm:glob_qe}, i.e., the existence of global quasistatic evolutions (also referred to as energetic solutions) for the model of geomaterials coupling damage and small-strain associative plasticity introduced before. Moreover, in Theorem~\ref{thm:cont_time} we perform a detailed analysis of the regularity in time of these evolutions.

\subsection{Definition and preliminary results}

We start by introducing the mathematical formulation of our model of geomaterials. Following~\cite{Cr3} and the definition of energetic solution of~\cite{MieThe99MMRI}, we give the following notion of global quasistatic evolutions.

\begin{definition}\label{def:QSE}
Assume~\eqref{eq:w}--\eqref{eq:B3} and~\eqref{eq:K}. A quadruple $(\alpha,u,e,p)$ from $[0,T]$ into $H^1(\Omega;[0,1])\times\mathcal A$ is a {\it global quasistatic evolution} with datum $w$ if:
\begin{itemize}
    \item[(qs0)] for a.e.\ $x\in\Omega$ the map
    \begin{equation*}
t\mapsto \alpha(t,x)\quad\text{is non increasing on $[0,T]$}; 
\end{equation*}
    \item[(qs1)] $(u(t),e(t),p(t))\in\mathcal A(w(t))$ for all $t\in[0,T]$ and
    \begin{align*}
     &\mathcal E(\alpha(t),e(t),p(t))\le \mathcal E(\beta,\eta,q)+\mathcal H(q-p(t))\quad\text{for all $(\beta,v,\eta,q)\in \mathcal D(\alpha(t))\times \mathcal A(w(t))$;}
    \end{align*}
    \item[(qs2)] $p\colon [0,T]\to H^1(\Omega;\M)$ has bounded $\mathcal H$-variation and for all $t\in[0,T]$
  \begin{align*}
      &\mathcal E(\alpha(t),e(t),p(t))+\mathcal V_{\mathcal H}(p;0,t)= \mathcal E(\alpha(0),e(0),p(0))+\int_0^t(\C e(s),\EE \dot w(s))\,\de s.
  \end{align*}
\end{itemize}
\end{definition}

\begin{remark}
The first condition (qs0) models the {\it irreversibility} of the damage process, (qs1) is the {\it global stability condition}, and (qs2) is the {\it energy-dissipation balance}.
\end{remark}

Let us check that the integral in (qs2) is well defined. To this aim, we first prove the following stability result.

\begin{lemma}\label{lem:stab}
Assume~\eqref{eq:d}--\eqref{eq:B3}. Let $w_k\in H^1(\Omega;\R^n)$, and $(\alpha_k,u_k,e_k,p_k)\in H^1(\Omega;[0,1])\times\mathcal A(w_k)$ for all $k\in\N$. Assume that as $k\to\infty$
\begin{align*}
   &u_k\rightharpoonup u_\infty\quad\text{in $H^1(\Omega;\R^n)$}, & &\alpha_k\rightharpoonup \alpha_\infty\quad\text{in $H^1(\Omega)$},\\
   &e_k\rightharpoonup e_\infty\quad\text{in $L^2(\Omega;\M)$}, & &p_k\rightharpoonup p_\infty\quad\text{in $H^1(\Omega;\M)$},\\
   &w_k\rightharpoonup w_\infty\quad\text{in $H^1(\Omega;\R^n)$}. & &
\end{align*}
Then $(\alpha_\infty,e_\infty,p_\infty)\in\mathcal A(w_\infty)$. If in addition
\begin{equation}\label{eq:gl_kst}
\mathcal E(\alpha_k,e_k,p_k)\le \mathcal E(\beta,\eta,q)+\mathcal H(q-p_k) \quad\text{for all $(\beta,v,\eta,q)\in\mathcal D(\alpha_k)\times \mathcal A(w_k)$},
\end{equation}
then
\begin{equation}\label{eq:gl_infst}
\mathcal E(\alpha_\infty,e_\infty,p_\infty)\le \mathcal E(\beta,\eta,q)+\mathcal H(q-p_\infty) \quad\text{for all $(\beta,v,\eta,q)\in\mathcal D(\alpha_\infty)\times \mathcal A(w_\infty)$}.
\end{equation}
\end{lemma}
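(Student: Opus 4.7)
\emph{Part 1 (admissibility).} The first claim follows by passing the linear constraints to the limit: $\EE u_\infty = e_\infty + p_\infty$ in $L^2(\Omega;\M)$ as the weak limit of the identity $\EE u_k = e_k + p_k$; $u_\infty - w_\infty \in H^1_0(\Omega;\R^n)$ since $H^1_0$ is weakly closed and each $u_k - w_k$ lies in it, whence $u_\infty = w_\infty$ on $\partial\Omega$; and $0 \le \alpha_\infty \le 1$ a.e.\ by Rellich compactness and pointwise a.e.\ convergence of $\alpha_k$ along a subsequence.

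\emph{Part 2 (joint recovery sequence for stability).} For any competitor $(\beta, v, \eta, q) \in \mathcal D(\alpha_\infty) \times \mathcal A(w_\infty)$, the plan is to set
\[
v_k := v + (u_k - u_\infty), \quad \eta_k := \eta + (e_k - e_\infty), \quad q_k := q + (p_k - p_\infty), \quad \beta_k := \min(\beta, \alpha_k),
\]
so that $(\beta_k, v_k, \eta_k, q_k) \in \mathcal D(\alpha_k) \times \mathcal A(w_k)$. The decisive feature is $q_k - p_k \equiv q - p_\infty$, hence $\mathcal H(q_k - p_k) \equiv \mathcal H(q - p_\infty)$ is constant in $k$ and requires no limit passage. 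Substituting into~\eqref{eq:gl_kst} and rearranging, it suffices to show
\[
\liminf_{k \to \infty} \bigl[\mathcal E(\alpha_k, e_k, p_k) - \mathcal E(\beta_k, \eta_k, q_k)\bigr] \ge \mathcal E(\alpha_\infty, e_\infty, p_\infty) - \mathcal E(\beta, \eta, q).
\]
For the elastic and plastic Dirichlet parts, expand via the identity $|a+b|^2 - |a|^2 = 2\,a\colon b + |b|^2$ with $a$ the weakly convergent sequence ($e_k$ or $\nabla p_k$) and $b$ the fixed perturbation ($\eta - e_\infty$ or $\nabla q - \nabla p_\infty$): the cross term is linear in $a$ against a fixed test, hence converges, while the $|b|^2$ remainder is $k$-independent. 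The hardening differences $\tilde Q(\alpha_k, p_k) - \tilde Q(\beta_k, q_k)$ converge by the Rellich compact embeddings $H^1 \hookrightarrow L^4$ (for $p_k, q_k$) and $H^1 \hookrightarrow L^p$, $p < 6$ (for $\alpha_k, \beta_k$), combined with the product estimates of Remark~\ref{rem:emb}; and $D(\alpha_k) - D(\beta_k)$ converges by dominated convergence, using $d \in C^{1,1}([0,1])$ together with the a.e.\ bound $|\beta_k - \beta| \le (\alpha_\infty - \alpha_k)^+ \to 0$.

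\emph{Part 3 (main obstacle: the damage Dirichlet energy).} The delicate step is the liminf of $\|\nabla \alpha_k\|_2^2 - \|\nabla \beta_k\|_2^2$, because $\nabla \alpha_k \rightharpoonup \nabla \alpha_\infty$ only weakly in $L^2$ and $\nabla \beta_k$ is glued from $\nabla \beta$ and $\nabla \alpha_k$. Exploiting $\beta_k = \min(\beta, \alpha_k)$ and Stampacchia's theorem ($\nabla \beta = \nabla \alpha_k$ a.e.\ on $\{\beta = \alpha_k\}$, and $\nabla \beta = \nabla \alpha_\infty$ a.e.\ on $\{\beta = \alpha_\infty\}$), one gets
\[
\|\nabla \alpha_k\|_2^2 - \|\nabla \beta_k\|_2^2 = \int_{\{\beta < \alpha_k\}} \bigl(|\nabla \alpha_k|^2 - |\nabla \beta|^2\bigr)\dx = \int_{\{\beta < \alpha_k\}} |\nabla \alpha_k - \nabla \beta|^2 \dx + 2\!\int_{\{\beta < \alpha_k\}} \nabla \beta \colon (\nabla \alpha_k - \nabla \beta)\dx.
\]
The plan is to show these summands contribute at least $\|\nabla \alpha_\infty - \nabla \beta\|_2^2$ and $2 \int \nabla \beta \colon (\nabla \alpha_\infty - \nabla \beta)\dx$ respectively, whose sum equals $\|\nabla \alpha_\infty\|_2^2 - \|\nabla \beta\|_2^2$ and closes~\eqref{eq:gl_infst}. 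For the quadratic summand, restrict the integration to the ``clean'' subset $\{\beta < \alpha_\infty\}$ (on which $\chi_{\{\beta < \alpha_k\}} \to 1$ a.e.) and apply weak $L^2$-lsc: the extra set $\{\beta = \alpha_\infty\}$ contributes nothing since $|\nabla \alpha_\infty - \nabla \beta|^2 = 0$ there. For the linear summand, the same coincidence identity gives $\nabla \beta \colon (\nabla \alpha_\infty - \nabla \beta) = 0$ on $\{\beta = \alpha_\infty\}$, making the ambiguity of $\chi_{\{\beta < \alpha_k\}}$ on that set harmless; the convergence then follows by weak--strong pairing between the strongly $L^2$-convergent indicator-weighted $\nabla \beta$ and the weakly convergent $\nabla \alpha_k - \nabla \beta$. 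The arbitrariness of $(\beta, v, \eta, q)$ then delivers the limiting stability~\eqref{eq:gl_infst}.
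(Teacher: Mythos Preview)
Your overall strategy is correct and coincides with the paper's: the same recovery sequence $(\beta_k,v_k,\eta_k,q_k)=(\beta\wedge\alpha_k,\,v+u_k-u_\infty,\,\eta+e_k-e_\infty,\,q+p_k-p_\infty)$ is used, the key cancellation $q_k-p_k\equiv q-p_\infty$ is exploited, and all ``lower-order'' terms are handled via Rellich compactness exactly as you describe.

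The one point where your presentation differs from the paper is the damage Dirichlet term, and here your argument has a small gap. You claim that $\chi_{\{\beta<\alpha_k\}}\nabla\beta$ converges \emph{strongly} in $L^2$, but on $\{\beta=\alpha_\infty\}$ the indicator $\chi_{\{\beta<\alpha_k\}}=\chi_{\{\alpha_\infty<\alpha_k\}}$ may genuinely oscillate (take e.g.\ $\alpha_k=\alpha_\infty+\tfrac1k\sin(k\,\cdot)$ in one dimension), so strong convergence fails in general. The fix is to group the indicator with the \emph{other} factor: since $\chi_{\{\beta<\alpha_k\}}(\nabla\alpha_k-\nabla\beta)=\nabla(\alpha_k-\beta)^+$ and $(\alpha_k-\beta)^+\rightharpoonup(\alpha_\infty-\beta)^+=\alpha_\infty-\beta$ in $H^1(\Omega)$, both your quadratic and linear summands are handled at once by weak lower semicontinuity of $\|\nabla(\alpha_k-\beta)^+\|_2^2$ and weak convergence of the pairing with the fixed $\nabla\beta$.

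The paper packages this more cleanly via the lattice identity
\[
\|\nabla\beta\|_2^2+\|\nabla\alpha_k\|_2^2=\|\nabla(\beta\wedge\alpha_k)\|_2^2+\|\nabla(\beta\vee\alpha_k)\|_2^2,
\]
which gives $\|\nabla\alpha_k\|_2^2-\|\nabla\beta_k\|_2^2=\|\nabla(\beta\vee\alpha_k)\|_2^2-\|\nabla\beta\|_2^2$, after which one only needs $\beta\vee\alpha_k\rightharpoonup\alpha_\infty$ in $H^1$ and weak lower semicontinuity of the norm. This is equivalent to your decomposition (indeed $(\alpha_k-\beta)^+=\alpha_k\vee\beta-\beta$) but avoids splitting into two pieces.
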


\begin{proof}
We argue as in~\cite[Theorem 3.6]{Cr1}. Since
$$\EE u_k=e_k+p_k\quad\text{in $\Omega$ for all $k\in\N$},\qquad u_k=w_k\quad\text{on $\partial\Omega$ for all $k\in\N$},$$
by passing to the limit as $k\to\infty$ and using the continuity of the trace operator, we derive that 
$$\EE u_\infty=e_\infty+p_\infty\quad\text{in $\Omega$},\qquad u_\infty=w_\infty\quad\text{on $\partial\Omega$}.$$
Therefore, $(u_\infty,e_\infty,p_\infty)\in\mathcal A(w_\infty)$. 

We fix $(\beta,v,\eta,q)\in\mathcal D(\alpha_\infty)\times \mathcal A(w_\infty)$ and for all $k\in\N$ we set
\begin{align*}
\beta_k\coloneqq \beta\wedge\alpha_k,\qquad v_k\coloneqq v-u_\infty+u_k,\qquad\eta_k\coloneqq \eta-e_\infty+e_k,\qquad q_k\coloneqq q-p_\infty+p_k.
\end{align*}
By construction $(\beta_k,v_k,\eta_k,q_k)\in\mathcal D(\alpha_k)\times \mathcal A(w_k)$ for all $k\in\mathbb N$, and as $k\to\infty$
\begin{align}
   &v_k\rightharpoonup v\quad\text{in $H^1(\Omega;\R^n)$}, & &\beta_k\rightharpoonup \beta \quad\text{in $H^1(\Omega)$},\label{eq:vqk1}\\
   &\eta_k\rightharpoonup \eta\quad\text{in $L^2(\Omega;\M)$}, & &q_k\rightharpoonup q\quad\text{in $H^1(\Omega;\M)$}.\label{eq:vqk2}
\end{align}
We use $(\beta_k,v_k,\eta_k,q_k)$ as test function in~\eqref{eq:gl_kst}, and from the identity
$$\|\nabla \beta\|_2^2+\|\nabla \alpha_k\|_2^2=\|\nabla (\beta\wedge\alpha_k)\|_2^2+\|\nabla (\beta\vee\alpha_k)\|_2^2,$$
we derive
\begin{align}
&D(\alpha_k)+\|\nabla (\beta\vee \alpha_k)\|_2^2-\|\nabla \beta\|_2^2+\tilde Q(\alpha_k,p_k)\nonumber\\
&=D(\alpha_k)+\|\nabla \alpha_k\|_2^2-\|\nabla \beta_k\|_2^2+\tilde Q(\alpha_k,p_k)\nonumber\\
&\le \mathcal Q(\eta_k)-\mathcal Q(e_k)+D(\beta_k)+\|\nabla q_k\|_2^2-\|\nabla p_k\|_2^2+\tilde Q(\beta_k,q_k)+\mathcal H(q_k-p_k)\nonumber\\
&=\frac{1}{2}(\C(\eta-e_\infty+2e_k),\eta-e_\infty)_2+D(\beta_k)+\tilde Q(\beta_k,q_k)\nonumber\\
&\quad+(\nabla q-\nabla p_\infty+2\nabla p_k,\nabla q-\nabla p_\infty)_2+\mathcal H(q-p_\infty).\label{eq:minvqk}
\end{align}
By~\eqref{eq:vqk1}--\eqref{eq:vqk2} and Remark~\ref{rem:emb}, for all $\theta\in[1,6)$ we have as $k\to\infty$
\begin{align*}
&\alpha_k\to\alpha_\infty\quad\text{in $L^\theta(\Omega)$},& &\beta_k\to \beta \quad\text{in $L^\theta(\Omega)$},\\
&p_k\to p_\infty\quad\text{in $L^\theta(\Omega;\M)$},& &q_k\to q \quad\text{in $L^\theta(\Omega;\M)$}.
\end{align*}
In particular, by using~\eqref{eq:d} and~\eqref{eq:B1} and arguing as in Remark~\ref{rem:emb} we deduce
\begin{align*}
&\lim_{k\to\infty}D(\alpha_k)=D(\alpha_\infty),& & \lim_{k\to\infty} D(\beta_k)=D(\beta),\\
&\lim_{k\to\infty}\tilde Q(\alpha_k,p_k)=\tilde Q(\alpha_\infty,p_\infty),& & \lim_{k\to\infty}\tilde Q(\beta_k,q_k)=\tilde Q(\beta,q).
\end{align*}
Moreover, $\beta\vee \alpha_k\rightharpoonup \alpha_\infty$ in $H^1(\Omega)$ as $k\to\infty$, which implies
$$\|\nabla\alpha_\infty\|_2^2\le \liminf_{k\to\infty}\|\nabla (\beta\vee\alpha_k)\|_2^2.$$
Hence, by passing to the limit as $k\to\infty$ in~\eqref{eq:minvqk} we get
\begin{align*}
&D(\alpha_\infty)+\|\nabla \alpha_\infty\|_2^2-\|\nabla \beta\|_2^2+\tilde Q(\alpha_\infty,p_\infty)\\
&\le\frac{1}{2}(\C(\eta+e_\infty),\eta-e_\infty)_2+D(\beta)+\tilde Q(\beta,q)+(\nabla q+\nabla p_\infty,\nabla q-\nabla p_\infty)_2+\mathcal H(q-p_\infty)\\
&=\mathcal Q(\eta)-\mathcal Q(e_\infty)+D(\beta)+\tilde Q(\beta,q)+\|\nabla q\|_2^2-\|\nabla p_\infty\|_2^2+\mathcal H(q-p_\infty),
\end{align*}
which gives~\eqref{eq:gl_infst}.
\end{proof}

Thanks to Lemma~\ref{lem:stab} we deduce the following regularity result.

\begin{lemma}\label{lem:measurability}
Assume~\eqref{eq:w}--\eqref{eq:B3} and~\eqref{eq:K}. Let $(\alpha,u,e,p)$ from $[0,T]$ into $H^1(\Omega;[0,1])\times\mathcal A$ be a quadruple satisfying {\em (qs0)} and {\em (qs1)}, and such that $p\colon [0,T]\to H^1(\Omega;\M)$ has bounded $\mathcal H$-variation. Then there exists a constant $C>0$, independent on $t\in[0,T]$, such that
\begin{equation}\label{eq:unif_bound}
\|\alpha(t)\|_{H^1}+\|u(t)\|_{H^1}+\|e(t)\|_2+\|p(t)\|_{H^1}\le C\quad\text{for all $t\in[0,T]$}.
\end{equation}
Moreover, there exists a countable set $N\subset [0,T]$ such that the quadruple $(\alpha,u,e,p)$ is weakly continuous from $[0,T]\setminus N$ into $H^1(\Omega)\times \mathcal A$. In particular, the quadruple $(\alpha,u,e,p)$ is strongly measurable from $[0,T]$ into $H^1(\Omega)\times \mathcal A$.
\end{lemma}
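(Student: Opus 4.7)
The plan is to deduce the uniform bound from the global stability (qs1) by choosing a competitor anchored at the \emph{final} time $T$, and then to establish weak continuity outside a countable set through two scalar monotone functions of time, after which strong measurability follows from Pettis' theorem.

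For the bound~\eqref{eq:unif_bound}, I would test (qs1) at time $t$ against the admissible competitor
$$(\beta,v,\eta,q)=\bigl(\alpha(t),\,u(T)+w(t)-w(T),\,e(T)+\EE w(t)-\EE w(T),\,p(T)\bigr),$$
which yields
$$\mathcal E(\alpha(t),e(t),p(t))\le\mathcal E\bigl(\alpha(t),e(T)+\EE w(t)-\EE w(T),p(T)\bigr)+\mathcal H(p(T)-p(t)).$$
The dissipation term is bounded by $\mathcal V_{\mathcal H}(p;t,T)\le\mathcal V_{\mathcal H}(p;0,T)<\infty$, since the argument is a \emph{forward} increment; this detail is crucial as $K$ may be unbounded and $\mathcal H$ asymmetric. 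Using $w\in C([0,T];H^1(\Omega;\R^n))$ from~\eqref{eq:w}, the right-hand side is bounded by a constant independent of $t$. From~\eqref{eq:C2} and~\eqref{eq:B3} I would read off $\|e(t)\|_2,\|\nabla\alpha(t)\|_2,\|\nabla p(t)\|_2\le C$; since $\alpha(t)\in[0,1]$ this gives $\|\alpha(t)\|_{H^1}\le C$; \eqref{eq:rHvar} provides $\|p(t)\|_1\le\|p(0)\|_1+r_H^{-1}\mathcal V_{\mathcal H}(p;0,T)\le C$, so Corollary~\ref{coro:H1norm} yields $\|p(t)\|_{H^1}\le C$; Korn's inequality applied to $u(t)-w(t)\in H^1_0(\Omega;\R^n)$ with $\EE(u(t)-w(t))=e(t)+p(t)-\EE w(t)$ then produces $\|u(t)\|_{H^1}\le C$.

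Next, I would set $N:=N_1\cup N_2$, where $N_1$ is the at most countable discontinuity set of the non-increasing map $t\mapsto\int_\Omega\alpha(t,x)\,\de x$ (well-defined by (qs0)) and $N_2$ that of the non-decreasing $t\mapsto\mathcal V_{\mathcal H}(p;0,t)$. Fix $t\in[0,T]\setminus N$ and $t_n\to t$. The pointwise monotonicity of $\alpha(\cdot,x)$ together with convergence of the integrals forces $\alpha(t_n)\to\alpha(t)$ in $L^1(\Omega)$, which upgraded through the uniform $H^1$-bound yields $\alpha(t_n)\rightharpoonup\alpha(t)$ in $H^1(\Omega)$. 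Similarly, $r_H\|p(t_n)-p(t)\|_1\le\mathcal H(p(s)-p(r))\le\mathcal V_{\mathcal H}(p;r,s)\to 0$, where $r<s$ denote the ordered pair $\{t,t_n\}$, so $p(t_n)\to p(t)$ in $L^1(\Omega;\M)$ and hence $p(t_n)\rightharpoonup p(t)$ in $H^1(\Omega;\M)$. Extracting weak limits $\hat e,\hat u$ of $e(t_n),u(t_n)$ along subsequences, Lemma~\ref{lem:stab} gives that $(\alpha(t),\hat u,\hat e,p(t))$ is globally stable at time $t$. Testing this stability with $\beta=\alpha(t)$, $q=p(t)$ reduces to the strictly convex minimization $\mathcal Q(\hat e)\le\mathcal Q(\eta)$ over admissible $\eta$, which is also satisfied by $e(t)$; strict convexity forces $\hat e=e(t)$, and a Korn argument on $\hat u-u(t)\in H^1_0(\Omega;\R^n)$ with vanishing symmetric gradient gives $\hat u=u(t)$. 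Uniqueness of cluster points yields weak continuity at $t$.

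Strong measurability follows from Pettis' theorem: the target space is separable, the quadruple is weakly measurable (being weakly continuous outside the countable set $N$), and its range is essentially separably valued. The principal technical obstacle is the one addressed in the first step, namely the asymmetric plastic dissipation in the unbounded-$K$ setting: testing (qs1) with the initial-time competitor $p(0)$ would produce the backward increment $\mathcal H(p(0)-p(t))$, which need not be controlled by $\mathcal V_{\mathcal H}$, whence the apparently unusual choice of anchoring the competitor at $T$.
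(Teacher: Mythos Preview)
Your overall strategy matches the paper's proof closely, and the parts concerning $p$, the identification of $(u,e)$ via Lemma~\ref{lem:stab} and strict convexity of $\mathcal Q$, and the invocation of Pettis' theorem are all correct. Your remark about anchoring the plastic competitor at $T$ to obtain the forward increment $\mathcal H(p(T)-p(t))$ is also exactly the right observation.

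There is, however, a genuine gap in the uniform bound for $\|\nabla\alpha(t)\|_2$. With your choice $\beta=\alpha(t)$, the terms $D(\alpha(t))$ and $\|\nabla\alpha(t)\|_2^2$ appear identically on \emph{both} sides of the inequality
\[
\mathcal E(\alpha(t),e(t),p(t))\le\mathcal E\bigl(\alpha(t),\eta,p(T)\bigr)+\mathcal H(p(T)-p(t))
\]
and therefore cancel. After cancellation you are left with
\[
\mathcal Q(e(t))+\tilde Q(\alpha(t),p(t))+\|\nabla p(t)\|_2^2
\le \mathcal Q(\eta)+\tilde Q(\alpha(t),p(T))+\|\nabla p(T)\|_2^2+\mathcal H(p(T)-p(t)),
\]
from which you can indeed read off bounds on $\|e(t)\|_2$ and $\|\nabla p(t)\|_2$, but \emph{not} on $\|\nabla\alpha(t)\|_2$. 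Equivalently, your claim that ``the right-hand side is bounded by a constant independent of $t$'' is false as stated, since $\mathcal E(\alpha(t),\eta,p(T))$ still contains the unknown $\|\nabla\alpha(t)\|_2^2$. Without this bound you cannot assert $\alpha(t_n)\rightharpoonup\alpha(t)$ in $H^1(\Omega)$, and Lemma~\ref{lem:stab} then fails to apply.

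The fix is immediate: take $\beta=0\in\mathcal D(\alpha(t))$ instead (this is what the paper does). Then the right-hand side involves $D(0)+\|\nabla 0\|_2^2+\tilde Q(0,p(T))$, which is genuinely independent of $t$, and $\|\nabla\alpha(t)\|_2^2$ survives on the left. With this single change your argument goes through and is essentially identical to the paper's.
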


\begin{proof}
Since $p\colon [0,T]\to L^1(\Omega;\M)$ has bounded $\mathcal H$-variation, by~\eqref{eq:rHvar} we deduce that $p$ has bounded total variation. In particular, $\|p(t)\|_1$ is uniformly bounded in $[0,T]$, being
\begin{equation}\label{eq:pbound}
\|p(t)\|_1\le \|p(t)-p(0)\|_1+\|p(0)\|_1\le \mathcal V(p;0,T)+\|p(0)\|_1\quad\text{for all $t\in[0,T]$}.    
\end{equation}
Since $(0,u(T)-w(T)+w(t),e(T)-\EE w(T)+\EE w(t),p(T))\in\mathcal D(\alpha(t))\times\mathcal A(w(t))$, by the global stability condition (qs1) we deduce
\begin{align*}
&\mathcal E(\alpha(t),e(t),p(t))\\
&\le \mathcal Q(e(T))+Q(\EE w(T)-\EE w(t))+(\C e(T),\EE w(T)-\EE w(t))_2+\tilde Q(0,p(T))+\|\nabla p(T)\|_2^2+\mathcal H(p(T)-p(t))\\
&\le \frac{\gamma_2}{2}\|e(T)\|_2^2+ \frac{\gamma_2}{2}\|\EE w(T)-\EE w(t)\|_2^2+ \gamma_2\|e(T)\|_2\|\EE w(T)-\EE w(t)\|_2\\
&\quad+\|\B\|_\infty\|p(T)\|_2^2+\|\nabla p(T)\|_2^2+\mathcal V_{\mathcal H}(p,t,T)\\
&\le\gamma_2\|e(T)\|_2^2+\gamma_2\left(\int_0^T\|\EE \dot w(r)\|_2\,\de r\right)^2+\|\B\|_\infty\|p(T)\|_2^2+ \|\nabla p(T)\|_2^2+\mathcal V_{\mathcal H}(p;0,T).
\end{align*}
Notice that the quantity on the right-hand side is independent on $t\in[0,T]$. Since
$$\mathcal E(\alpha(t),e(t),p(t))\ge \frac{\gamma_1}{2}\|e(t)\|_2^2+\|\nabla\alpha(t)\|_2^2+\|\nabla p(t)\|_2^2,$$
we deduce that there exists a constant $C_1>0$, independent on $t\in[0,T]$, such that
$$\|\nabla \alpha(t)\|_2^2+\|e(t)\|_2^2+\|\nabla p(t)\|_2^2\le C_1\quad\text{for all $t\in[0,T]$}.$$
By using~\eqref{eq:pbound}, Corollary~\ref{coro:H1norm}, and the fact that $0\le \alpha(t)\le 1$ a.e.\ in $\Omega$ for all $t\in[0,T]$ we obtain a constant $C_2>0$, independent on $t\in[0,T]$, such that
\begin{equation}\label{eq:aepbound}
\|\alpha(t)\|_{H^1}^2+\|e(t)\|_2^2+\|p(t)\|_{H^1}^2\le C_2\quad\text{for all $t\in[0,T]$}.   
\end{equation}
Finally, since $u(t)-w(t)\in H^1_0(\Omega;\R^n)$, by Korn Inequality there exists a constant $C_3>0$, independent on $t\in[0,T]$, such that 
\begin{align*}
\|u(t)\|_2&\le \|u(t)-w(t)\|_2+\|w(t)\|_2\\
&\le C\|\EE u(t)-\EE w(t)\|_2+\|w(t)\|_2\le C\|e(t)\|_2+C\|p(t)\|_2+C\|\EE w(t)\|_2+\|w(t)\|_2.    
\end{align*}
Therefore, by~\eqref{eq:aepbound} and Korn Inequality there exists a constant $C_5>0$, independent on $t\in[0,T]$, such that 
$$\|u(t)\|_{H^1}\le C.$$
This implies~\eqref{eq:unif_bound}. 

It remains to prove that there exists a countable set $N\subset [0,T]$ such that the quadruple $(\alpha,u,e,p)$ is weakly continuous from $[0,T]\setminus N$ into $H^1(\Omega)\times \mathcal A$. Indeed, if this holds, then the quadruple $(\alpha,u,e,p)$ is weakly measurable from $[0,T]$ into $H^1(\Omega)\times \mathcal A$, and therefore strongly measurable, since $H^1(\Omega)\times \mathcal A$ is a separable Banach space.

We start by observing that $\alpha\colon [0,T]\to H^1(\Omega;[0,1])$ satisfies 
$$\|\alpha(t)\|_\infty\le 1\quad\text{for all $t\in[0,T]$},\qquad \alpha(t_2)\le \alpha(t_1)\quad\text{a.e.\ in $\Omega$ for all $0\le t_1\le t_2\le T$}.$$
By~\cite[Lemma A.2]{Cr1} there exists a countable set $N_1\subset [0,T]$ such that the function $\alpha\colon [0,T]\setminus N_1\to L^2(\Omega)$ is (strongly) continuous. In particular, thanks to the uniform estimate~\eqref{eq:unif_bound} we derive that $\alpha\colon [0,T]\setminus N_1\to H^1(\Omega)$ is weakly continuous. Similarly, since $p$ has bounded total variation in $L^1(\Omega;\M)$, there exists a countable set $N_2\subset [0,T]$ such that the function $p\colon [0,T]\setminus N_2\to L^1(\Omega;\M)$ is strongly continuous.
Hence, the uniform estimate~\eqref{eq:unif_bound} yields that $p\colon [0,T]\setminus N_2\to H^1(\Omega;\M)$ is weakly continuous. 

Let us define $N\coloneqq N_1\cup N_2$. We claim that the pair $(u,e)$ is weakly continuous from $[0,T]\setminus N$ into $H^1(\Omega;\R^n)\times L^2(\Omega;\M)$. Indeed, let us fix $t_\infty\in [0,T]\setminus N$ and let us consider a sequence $(t_k)_k\subset [0,T]\setminus N$ such that $t_k\to t_\infty$ as $k\to\infty$. By the uniform estimate~\eqref{eq:unif_bound} there exists a constant $C>0$ independent on $k\in\N$ such that 
$$\|u(t_k)\|_{H^1}+\|e(t_k)\|_2\le C\quad\text{for all $k\in\N$}.$$
Therefore, there exists a subsequence $(t_{k_j})_j\subset(t_k)_k$ and a pair $(u_\infty,e_\infty)\in H^1(\Omega;\R^n)\times L^2(\Omega;\M)$ such that as $j\to\infty$
\begin{align*}
u(t_{k_j})\rightharpoonup u_\infty\quad\text{in $H^1(\Omega;\R^n)$},\qquad e(t_{k_j})\rightharpoonup e_\infty\quad\text{in $L^2(\Omega;\M)$}.
\end{align*}
Moreover, as $j\to\infty$ we have
\begin{align*}
\alpha(t_{k_j})\rightharpoonup \alpha(t_\infty)\quad\text{in $H^1(\Omega)$},\qquad p(t_{k_j})\rightharpoonup p(t_\infty)\quad\text{in $H^1(\Omega;\M)$},\qquad
w(t_{k_j})\to w(t_\infty)\quad\text{in $H^1(\Omega;\R^n)$}.
\end{align*}
Thus, we can apply Lemma~\ref{lem:stab} to deduce that $(\alpha(t_\infty),u_\infty,e_\infty,p(t_\infty))\in\mathcal A(w(t_\infty))$ and satisfies 
\begin{align*}
&\mathcal E(\alpha(t_\infty),e_\infty,p(t_\infty))\le \mathcal E(\beta,\eta,q)+\mathcal H(q-p(t_\infty))\quad\text{for all $(\beta,v,\eta,q)\in \mathcal D(\alpha(t_\infty))\times \mathcal A(w(t_\infty))$.}
\end{align*}
In particular, by choosing $\beta=\alpha(t_\infty)$ and $q=p(t_\infty)$, we derive that the pair $(u_\infty,e_\infty)$ minimizes the functional $F\colon H^1(\Omega;\R^n)\times L^2(\Omega;\M)\to [0,\infty)$, defined as 
$$F(v,\eta)\coloneqq \mathcal Q(\eta)\quad\text{for all $(v,\eta)\in H^1(\Omega;\R^n)\times L^2(\Omega;\M)$},$$
over the convex set 
$$G\coloneqq \{(v,\eta)\in H^1(\Omega;\R^n)\times L^2(\Omega;\M)\,:\, (v,\eta,p(t_\infty))\in\mathcal A(w(t_\infty))\}.$$
This implies that $(u_\infty,e_\infty)$ is uniquely determined for all $t\in[0,T]$ by the strict convexity of $\mathcal Q$. Therefore, $u(t_\infty)=u_\infty$ and $e(t_\infty)=e_\infty$. In particular, we get that as $k\to\infty$
\begin{align*}
u(t_k)\rightharpoonup u(t_\infty)\quad\text{in $H^1(\Omega;\R^n)$},\qquad e(t_k)\rightharpoonup e(t_\infty)\quad\text{in $L^2(\Omega;\M)$},
\end{align*}
i.e., the pair $(u,e)$ is weakly continuous from $[0,T]\setminus N$ into $H^1(\Omega;\R^n)\times L^2(\Omega;\M)$. 
\end{proof}

\begin{remark}
By Lemma~\ref{lem:measurability} the integral in (qs2) is well defined for all quadruple $(\alpha,u,e,p)$ from $[0,T]$ into $H^1(\Omega;[0,1])\times\mathcal A$ satisfying (qs0) and (qs1) and such that $p\colon [0,T]\to H^1(\Omega;\M)$ has bounded $\mathcal H$-variation. 
\end{remark}

\begin{remark}\label{rem:unif_bound}
If $(\alpha,u,e,p)$ from $[0,T]$ into $H^1(\Omega;[0,1])\times \mathcal A$ is a global quasistatic evolution, then we can improve the estimate in~\eqref{eq:unif_bound}. Indeed, by Lemma~\ref{lem:measurability} we have that the function $t\mapsto \|e(t)\|_2$ is bounded and measurable on $[0,T]$. In particular, if we define 
$$M\coloneqq\sup_{t\in[0,T]}\|e(t)\|_2<\infty,$$
by (qs2) we derive
$$\frac{\gamma_1}{2}M^2\le \mathcal E(\alpha(0),e(0),p(0))+\gamma_2 M\int_0^T\|\EE \dot w(s)\|_2\,\de s.$$
This implies the existence of a constant $C_1$, which depends only on $w$, $d$, $\C$, $\B$, and on the initial datum $(\alpha(0),e(0),p(0))$, satisfying
$$M=\sup_{t\in[0,T]}\|e(t)\|_2\le C_1.$$
Therefore, by using again (qs2) we derive the existence of a further constant $C_2$, depending only on $w$, $d$, $\C$, $\B$, $K$, and the initial datum $(\alpha(0),u(0),e(0),p(0))$, such that
\begin{equation*}
\|\alpha(t)\|_{H^1}+\|u(t)\|_{H^1}+\|e(t)\|_2+\|p(t)\|_{H^1}\le C_2\quad\text{for all $t\in[0,T]$}.
\end{equation*}     
\end{remark}

\subsection{Existence of a global quasistatic evoltion}

In this subsection we prove the existence of a global quasistatic evolution satisfying a fixed initial configuration. As in~\cite{Cr3}, the proof relies on the De Giorgi’s Minimizing Movement approach to quasistatic evolutions, where time-continuous evolutions are approximated by discrete-time ones, constructed by solving incremental minimization problems.

We consider an initial configuration $$(\alpha^0,u^0,e^0,p^0)\in H^1(\Omega;[0,1])\times\mathcal A(w(0)),$$ 
which satisfies
\begin{align}\label{eq:ic}
\mathcal E(\alpha^0,e^0,p^0)\le \mathcal E(\beta,\eta,q)+\mathcal H(q-p^0)\quad\text{for all $(\beta,v,\eta,q)\in \mathcal D(\alpha^0)\times \mathcal A(w(0))$}.
\end{align}
For all $k\in\N$ we define
\begin{equation*}
\tau_k\coloneqq \frac{T}{k},\quad t_k^i\coloneqq i\tau_k,\quad w_k^i\coloneqq w(t_k^i)\quad\text{for $i=0,\dots,k$}.
\end{equation*}
Starting from
\begin{equation*}
(\alpha_k^0,u_k^0,e_k^0,p_k^0)\coloneqq (\alpha^0,u^0,e^0,p^0)\in H^1(\Omega;[0,1])\times \mathcal A(w(0)),
\end{equation*}
for all $i=1,\dots,k$ we define
\begin{equation*}
(\alpha_k^i,u_k^i,e_k^i,p_k^i)\in \mathcal D(\alpha_k^{i-1})\times \mathcal A(w_k^i)
\end{equation*}
as the solution of
\begin{equation}\label{eq:gl_minki}
\min_{(\beta,v,\eta,q)\in \mathcal D(\alpha_k^{i-1})\times \mathcal A(w_k^i)}\left\{\mathcal E(\beta,\eta,q)+\mathcal H(q-p_k^{i-1})\right\}.
\end{equation}
Notice that 
\begin{equation*}
(\alpha_k^{i-1},w_k^i,\EE w_k^i,0)\in \mathcal D(\alpha_k^{i-1})\times \mathcal A(w_k^i)\neq\emptyset\quad\text{for all $i=1,\dots,k$}.
\end{equation*}
Moreover, for all $i=1,\dots,k$ and $(\alpha,u,e,p)\in\mathcal D(\alpha_k^{i-1})\times \mathcal A(w_k^i)$ we have
\begin{align*}
\mathcal E(\alpha,e,p)+\mathcal H(p-p_k^i)\ge \frac{\gamma_1}{2}\|e\|_2^2+\|\nabla\alpha\|_2^2+\|\nabla p\|_2^2+r_H\|p\|_1-r_H\|p_k^i\|_1
\end{align*}
and the functional
\begin{equation*}
(\alpha,u,e,p)\mapsto \mathcal E(\alpha,e,p)+\mathcal H(p-p_k^i)
\end{equation*}
is sequentially lower semicontinuous with respect to the weak convergences in the spaces $H^1(\Omega)$, $H^1(\Omega;\R^n)$, $L^2(\Omega;\M)$, and $H^1(\Omega;\M)$. Hence, by the Direct Method of Calculus of Variations, for all $k\in\N$ and $i=1\dots,k$ there exists a solution $(\alpha_k^i,u_k^i,e_k^i,p_k^i)\in\mathcal D(\alpha_k^i)\times\mathcal A(w_k^i)$ to the minimum problem~\eqref{eq:gl_minki}. In particular, since 
$$\mathcal H(q-p_k^{i-1})-\mathcal H(p_k^i-p_k^{i-1})\le \mathcal H(q-p_k^i),$$
the quadruple $(\alpha_k^i,u_k^i,e_k^i,p_k^i)$ satisfies
\begin{align}\label{eq:gl_stki}
\mathcal E(\alpha_k^i,e_k^i,p_k^i)\le \mathcal E(\beta,\eta,q)+\mathcal H(q-p_k^i)\quad\text{for all $(\beta,v,\eta,q)\in \mathcal D(\alpha_k^{i-1})\times \mathcal A(w_k^i)$.}
\end{align}

In the following lemma, we  derive an energy estimates for the discrete evolution $\{(\alpha_k^i,u_k^i,e_k^i,p_k^i)\}_{i=1}^k$. First, for all $k\in\N$ we define
\begin{equation*}
\dot w_k^i\coloneqq \frac{w_k^i-w_k^{i-1}}{\tau_k}\quad\text{for $i=1,\dots,k$}.
\end{equation*}

\begin{lemma}\label{lem:glslkey}
Assume~\eqref{eq:d}--\eqref{eq:B3},~\eqref{eq:K}, and~\eqref{eq:ic}. For all $k\in \N$ and $i=1,\dots,k$ we have
\begin{equation}\label{eq:gl_denin}
\mathcal E(\alpha_k^i,e_k^i,p_k^i)+\sum_{j=1}^i\mathcal H(p_k^j-p_k^{j-1})\le \mathcal E(\alpha^0,e^0,p^0)+\sum_{j=1}^i\tau_k(\C e_k^{j-1},\EE \dot w_k^j)_2+\delta_k,
\end{equation}
where
\begin{equation}\label{eq:deltak}
\delta_k\coloneqq \tau_k\frac{\gamma_2}{2}\int_0^T\|\EE \dot w(r)\|_2^2\,\de r\to 0\quad\text{as $k\to\infty$}.
\end{equation}
In particular, there exists a constant $C>0$ independent on $k$ and $i$, such that
\begin{align*}
&\max_{i=0,\dots,k}\|e_k^i\|_2+\max_{i=0,\dots,k}\|p_k^i\|_{H^1}+\max_{i=0,\dots,k}\|\alpha_k^i\|_{H^1}+\sum_{i=1}^k\mathcal H(p_k^j-p_k^{j-1})\le C.
\end{align*}
\end{lemma}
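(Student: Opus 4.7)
The plan is to apply a standard De Giorgi discrete energy estimate, using as competitor in each incremental minimization the previous time step shifted by the boundary datum. Fix $j\in\{1,\dots,k\}$ and test \eqref{eq:gl_minki} against
\begin{equation*}
(\beta,v,\eta,q) \coloneqq (\alpha_k^{j-1},\, u_k^{j-1} + w_k^j - w_k^{j-1},\, e_k^{j-1} + \EE w_k^j - \EE w_k^{j-1},\, p_k^{j-1}).
\end{equation*}
One checks directly that this quadruple lies in $\mathcal D(\alpha_k^{j-1})\times \mathcal A(w_k^j)$: admissibility of $\beta$ is trivial, while the boundary condition $v = w_k^j$ on $\partial\Omega$ and $\EE v = \eta + q$ in $\Omega$ follow from $(u_k^{j-1},e_k^{j-1},p_k^{j-1})\in\mathcal A(w_k^{j-1})$. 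Since $q - p_k^{j-1} = 0$, the dissipation vanishes, and expanding the quadratic form through $\mathcal Q(e + f) = \mathcal Q(e) + (\C e,f)_2 + \mathcal Q(f)$ yields the one-step inequality
\begin{equation*}
\mathcal E(\alpha_k^j,e_k^j,p_k^j) + \mathcal H(p_k^j - p_k^{j-1}) \le \mathcal E(\alpha_k^{j-1},e_k^{j-1},p_k^{j-1}) + (\C e_k^{j-1},\EE w_k^j - \EE w_k^{j-1})_2 + \mathcal Q(\EE w_k^j - \EE w_k^{j-1}).
\end{equation*}

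The cross term equals $\tau_k(\C e_k^{j-1},\EE \dot w_k^j)_2$ by definition of $\dot w_k^j$. For the quadratic remainder, writing $w_k^j - w_k^{j-1} = \int_{t_k^{j-1}}^{t_k^j} \dot w(r)\,\de r$ and applying Jensen's inequality together with the upper bound in \eqref{eq:C2} gives
\begin{equation*}
\sum_{j=1}^k \mathcal Q(\EE w_k^j - \EE w_k^{j-1}) \le \frac{\gamma_2}{2}\sum_{j=1}^k \tau_k \int_{t_k^{j-1}}^{t_k^j}\|\EE\dot w(r)\|_2^2\,\de r = \delta_k,
\end{equation*}
which establishes \eqref{eq:deltak}, and telescoping the one-step inequalities from $j=1$ to $i$ yields \eqref{eq:gl_denin}.

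For the uniform bounds, I would set $M_k\coloneqq \max_{0\le i\le k}\|e_k^i\|_2$. From \eqref{eq:C2}, \eqref{eq:B3} and $d\ge 0$ we have the coercivity
\begin{equation*}
\mathcal E(\alpha_k^i,e_k^i,p_k^i) \ge \frac{\gamma_1}{2}\|e_k^i\|_2^2 + \|\nabla\alpha_k^i\|_2^2 + \|\nabla p_k^i\|_2^2,
\end{equation*}
while the work term in \eqref{eq:gl_denin} is controlled by $\gamma_2 M_k \int_0^T\|\EE\dot w(r)\|_2\,\de r$. Plugging this into \eqref{eq:gl_denin} produces a quadratic inequality of the form $\frac{\gamma_1}{2}M_k^2 \le C_0 + \gamma_2 W M_k + \delta_k$ with $W\coloneqq \int_0^T\|\EE\dot w\|_2\,\de r$, which bounds $M_k$ uniformly in $k$ via the quadratic formula. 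Feeding the resulting estimate back into \eqref{eq:gl_denin} yields uniform control on $\|\nabla\alpha_k^i\|_2$, $\|\nabla p_k^i\|_2$ and on $\sum_{j=1}^k \mathcal H(p_k^j - p_k^{j-1})$. To upgrade these to $H^1$ bounds, I would use $0\le\alpha_k^i\le 1$ for the damage variable and, for the plastic strain, the telescoping chain
\begin{equation*}
\|p_k^i\|_1 \le \|p^0\|_1 + \sum_{j=1}^i \|p_k^j - p_k^{j-1}\|_1 \le \|p^0\|_1 + r_H^{-1}\sum_{j=1}^i \mathcal H(p_k^j - p_k^{j-1})
\end{equation*}
coming from \eqref{eq:rH}, combined with Corollary \ref{coro:H1norm}. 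The only subtle step is the quadratic bootstrap for $M_k$, which must be carried out before the remaining bounds follow linearly; otherwise the argument is routine.
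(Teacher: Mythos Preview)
Your proof is correct and follows essentially the same approach as the paper: the same competitor $(\alpha_k^{j-1}, u_k^{j-1}+w_k^j-w_k^{j-1}, e_k^{j-1}+\EE w_k^j-\EE w_k^{j-1}, p_k^{j-1})$ in the incremental problem, the same telescoping sum, the same Jensen-type estimate for the quadratic remainder, and the same bootstrap (quadratic inequality for $M_k$, then gradient bounds, then $L^1$ control on $p_k^i$ via \eqref{eq:rH} and Corollary~\ref{coro:H1norm}). The paper is slightly terser in deriving the bound on $\max_i\|e_k^i\|_2$, but the substance is identical.
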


\begin{proof}
We fix $i=1,\dots,k$ and for $j=1,\dots,i$ we consider
$$(\alpha_k^{j-1},u_k^{j-1}-w_k^{j-1}+w_k^j,e_k^{j-1}-\EE w_k^{j-1}+\EE w_k^j,p_k^{j-1})\in \mathcal D(\alpha_k^{j-1})\times \mathcal A(w_k^j).$$
By the minimality condition~\eqref{eq:gl_minki}
we get
\begin{align*}
\mathcal E(\alpha_k^j,e_k^j,p_k^j)+\mathcal H(p_k^j-p_k^{j-1}) &\le \mathcal E(\alpha_k^{j-1},e_k^{j-1}-\EE w_k^{j-1}+\EE w_k^j,p_k^{j-1})\\
&\le\mathcal E(\alpha_k^{j-1},e_k^{j-1},p_k^{j-1})+\tau_k(\C e_k^{j-1},\EE \dot w_k^j)_2+\frac{\gamma_2}{2}\|\EE w_k^j-\EE w_k^{j-1}\|_2^2.
\end{align*}
We sum over $j=1,\dots,i$ and we use the inequality
\begin{equation*}
\sum_{j=1}^k\|\EE w_k^j-\EE w_k^{j-1}\|_2^2\le\tau_k\int_0^T\|\EE \dot w(r)\|_2^2\,\de r
\end{equation*}
to obtain~\eqref{eq:gl_denin}.

By the discrete energy inequality~\eqref{eq:gl_denin} we get
$$\frac{\gamma_1}{2}\|e_k^i\|_2^2\le \mathcal E(\alpha^0,e^0,p^0)+\gamma_2\sum_{j=1}^i\tau_k\|\EE \dot w_k^j\|_2\|e_k^{j-1}\|_2+\delta_k.$$
Since $\delta_k\to 0$ as $k\to\infty$, there exists a constant $C_1>0$, independent on $k$ and $i$, such that
$$\max_{i=0,\dots,k}\|e_k^i\|_2\le C_1.$$
Therefore, by~\eqref{eq:gl_denin} we can  find another constant $C_2>0$, independent on $k$ and $i$, such that
\begin{align*}
&\max_{i=0,\dots,k}\|\nabla\alpha_k^i\|_2^2+ \max_{i=0,\dots,k}\|\nabla p_k^i\|_2^2+\sum_{i=1}^k\mathcal H(p_k^j-p_k^{j-1})\le C_2.
\end{align*}
By Corollary~\ref{coro:H1norm} and the inequality 
\begin{align*}
&\|p_k^i\|_1\le \|p^0\|_1+\sum_{j=1}^k\|p_k^j-p_k^{j-1}\|_1\le \|p^0\|_1+\frac{1}{r_H}\sum_{j=1}^k\mathcal H(p_k^j-p_k^{j-1}),
\end{align*}
we deduce the existence a constant $C_3>0$, independent on $k$ and $i$, such that
\begin{equation*}\max_{i=0,\dots,k}\|p_k^i\|_{H^1}\le C_3.
\end{equation*}
Since $0\le \alpha_k^i\le 1$ in $\Omega$, there exists a constant $C_4>0$ independent on $k$ and $i$ such that
\begin{equation*}
\max_{i=0,\dots,k}\|\alpha_k^i\|_{H^1}\le\sqrt{|\Omega|}+\max_{i=0,\dots,k}\|\nabla\alpha_k^i\|_2\le C_4.
\end{equation*}
This concludes the proof.
\end{proof}

We introduce the {\it piecewise constant interpolant} $\overline \alpha_k\colon [0,T]\to H^1(\Omega;[0,1])$ as
\begin{align}\label{eq:overline_ak}
\overline \alpha_k(t)\coloneqq\alpha_k^i\quad\text{for $t\in (t_k^{i-1},t_k^i]$, $i=1,\dots,k$},\qquad \overline \alpha_k(0)=\alpha_k^0,
\end{align}
and similarly we consider $\overline u_k\colon [0,T]\to H^1(\Omega;\R^n)$, $\overline e_k\colon [0,T]\to L^2(\Omega;\M)$, $\overline p_k\colon [0,T]\to H^1(\Omega;\M)$, and $\overline w_k\to H^1(\Omega;\R^n)$. Moreover, we define $\overline \tau_k\colon[0,T]\to [0,T]$ as 
\begin{equation}\label{eq:overline_tauk}
\overline\tau_k(t)\coloneqq t_k^i\quad\text{for $t\in (t_k^{i-1},t_k^i]$, $i=1,\dots,k$},\qquad \overline\tau_k(0)=0.
\end{equation}
By definition, we have
$$\sum_{j=1}^k\mathcal H(p_k^j-p_k^{j-1})=\mathcal V_{\mathcal H}(\overline p_k;0,T),$$
and thanks to Lemma~\ref{lem:glslkey} we deduce that the piecewise constant interpolants satisfy the following estimate: there is a constant $C>0$ independent on $k\in\N$ and $t\in[0,T]$ such that 
\begin{equation}\label{eq:k_unif_bound_gl}
\|\overline e_k(t)\|_2+\|\overline p_k(t)\|_{H^1}+\|\overline \alpha_k(t)\|_{H^1}+\mathcal V_{\mathcal H}(\overline p_k;0,T)\le C\quad\text{for all $t\in[0,T]$}.
\end{equation}
We can now prove our existence result for global quasistatic evolution.

\begin{theorem}\label{thm:glob_qe}
Assume~\eqref{eq:w}--\eqref{eq:B3},~\eqref{eq:K}, and~\eqref{eq:ic}. Then there exists a global quasistatic evolution $(\alpha,u, e, p)$ from $[0,T]$ into $H^1(\Omega;[0,1])\times\mathcal A$ satisfying the initial condition 
\begin{equation}\label{eq:initial}
(\alpha(0),u(0), e(0), p(0))=(\alpha^0,u^0,e^0,p^0).    
\end{equation}
\end{theorem}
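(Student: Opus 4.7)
\medskip
\noindent\textbf{Proof proposal.} The plan is to carry out the standard De Giorgi minimizing movement scheme already set up before the statement: starting from the discrete-in-time minimizers $(\alpha_k^i,u_k^i,e_k^i,p_k^i)$ satisfying~\eqref{eq:gl_stki} and the piecewise constant interpolants $(\overline\alpha_k,\overline u_k,\overline e_k,\overline p_k)$, I would extract a limit evolution and verify (qs0)--(qs2).

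First I would produce the candidate evolution. By the uniform bound~\eqref{eq:k_unif_bound_gl}, $\overline p_k$ has uniformly bounded $\mathcal H$-variation, hence by~\eqref{eq:rHvar} uniformly bounded total variation in $L^1(\Omega;\M)$; a Helly-type selection argument (as in~\cite{DMDSMo,Cr3}) gives a not relabelled subsequence and a map $p\colon[0,T]\to H^1(\Omega;\M)$ with bounded $\mathcal H$-variation such that $\overline p_k(t)\rightharpoonup p(t)$ in $H^1(\Omega;\M)$ for every $t\in[0,T]$. Since $\overline\alpha_k(t)$ is monotone nonincreasing in $t$ with values in $[0,1]$, a further diagonal extraction yields $\overline\alpha_k(t)\rightharpoonup\alpha(t)$ in $H^1(\Omega)$ for every $t$, and (qs0) is automatic. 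For each fixed $t$, the uniform $H^1\times L^2$ bound on $(\overline u_k(t),\overline e_k(t))$ gives weak limits $(u(t),e(t))$ along a $t$-dependent subsequence, and by Lemma~\ref{lem:stab} applied at time $t$ (using $\overline w_k(t)\to w(t)$ in $H^1$) the triple $(u(t),e(t),p(t))$ lies in $\mathcal A(w(t))$ and satisfies the stability~\eqref{eq:gl_infst}, which is exactly (qs1); strict convexity of $\mathcal Q$ in the argument already used in Lemma~\ref{lem:measurability} makes $(u(t),e(t))$ uniquely determined by $(\alpha(t),p(t),w(t))$, so the whole sequence converges without $t$-dependent subsequencing.

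Next I would pass to the limit in the energy balance. Rewriting~\eqref{eq:gl_denin} with the interpolants at each $t$ gives
\begin{equation*}
\mathcal E(\overline\alpha_k(t),\overline e_k(t),\overline p_k(t))+\mathcal V_{\mathcal H}(\overline p_k;0,\overline\tau_k(t))\le \mathcal E(\alpha^0,e^0,p^0)+\int_0^{\overline\tau_k(t)}(\C\overline e_k(s-\tau_k),\EE \dot w(s))_2\,\de s+\delta_k.
\end{equation*}
The left-hand side is lower semicontinuous: for $\mathcal Q(\overline e_k(t))$, $\|\nabla\overline\alpha_k(t)\|_2^2$, $\|\nabla\overline p_k(t)\|_2^2$ by weak lower semicontinuity; for the $D$ and $\tilde Q$ terms by Remark~\ref{rem:emb} combined with the compactness $H^1\hookrightarrow L^\theta$ for $\theta<6$; for $\mathcal V_{\mathcal H}$ by~\eqref{eq:Hdis-lsc}. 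On the right-hand side, $\overline e_k(s-\tau_k)\rightharpoonup e(s)$ in $L^2$ pointwise, and dominated convergence (thanks to~\eqref{eq:k_unif_bound_gl}) yields the limit $\int_0^t(\C e(s),\EE\dot w(s))_2\,\de s$, so we obtain the upper energy-dissipation inequality ``$\le$'' in (qs2). The reverse inequality is then the standard consequence of the global stability~(qs1): for any partition $0=s_0<\cdots<s_N=t$, applying (qs1) at time $s_{j-1}$ against the competitor $(\alpha(s_j),u(s_j)-w(s_j)+w(s_{j-1}),e(s_j)-\EE w(s_j)+\EE w(s_{j-1}),p(s_j))$ and summing yields a Riemann sum bounded by $\mathcal H$-increments; refining the partition and using the absolute continuity of $s\mapsto\EE w(s)$ in $L^2$ gives the lower bound $\mathcal E(\alpha(t),e(t),p(t))+\mathcal V_{\mathcal H}(p;0,t)\ge \mathcal E(\alpha^0,e^0,p^0)+\int_0^t(\C e(s),\EE\dot w(s))_2\,\de s$. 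Combining both inequalities yields (qs2), and by construction $(\alpha(0),u(0),e(0),p(0))=(\alpha^0,u^0,e^0,p^0)$, proving~\eqref{eq:initial}.

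The main obstacle I anticipate is the upper bound passage to the limit for the mixed cross-terms in $\mathcal E$ depending on $\alpha$ and $p$ through $\tilde Q$ and $D$: these are not weakly continuous on $H^1$ alone, so their convergence hinges on the strong $L^\theta$ compactness from Remark~\ref{rem:emb}. Once one has that, together with the Helly selection for $\overline p_k$ and the monotone selection for $\overline\alpha_k$, all remaining ingredients (stability via Lemma~\ref{lem:stab}, lower semicontinuity of the $\mathcal H$-dissipation, and the Riemann sum limit) are standard.
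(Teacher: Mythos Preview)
Your proposal is correct and follows essentially the same three-step strategy as the paper's proof: Helly-type selection for $\overline\alpha_k$ (via monotonicity) and $\overline p_k$ (via bounded $\mathcal H$-variation), identification of $(u(t),e(t))$ through Lemma~\ref{lem:stab} and the strict convexity of $\mathcal Q$ as in Lemma~\ref{lem:measurability}, then the upper energy inequality by lower semicontinuity of each term and the lower inequality by testing (qs1) along partitions. The only slightly imprecise step is the claim that $\overline e_k(s-\tau_k)\rightharpoonup e(s)$ pointwise in $s$; the paper handles this by observing that $\underline e_k\coloneqq\overline e_k(\cdot-\tau_k)\rightharpoonup e$ weakly in $L^2((0,T);L^2(\Omega;\M))$, which is what is actually needed to pass to the limit in the work integral.
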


\begin{proof}
{\bf Step 1}. Starting from $(\alpha^0,u^0,e^0,p^0)$ we consider the incremental problem~\eqref{eq:gl_minki} for all $k\in\N$ and $i=1,\dots,k$, and we obtain a sequence of approximate solutions $(\overline\alpha_k,\overline u_k,\overline e_k,\overline p_k)$ from $[0,T]$ into $H^1(\Omega;[0,1])\times\mathcal A$. Since all functions $\overline\alpha_k$ are non increasing in time, we can apply a generalized version of the classical Helly Theorem to conclude that there
exist a subsequence, still denoted by $\overline\alpha_k$, and a function $\alpha\colon[0,T]\to H^1(\Omega;[0,1])$ non increasing in time, such that for all $t\in[0,T]$ as $k\to\infty$
$$\overline\alpha_k(t)\rightharpoonup \alpha(t)\quad\text{in $H^1(\Omega)$}.$$ 
In particular, thanks to Remark~\ref{rem:emb}, for all $t\in[0,T]$ and $\theta\in[1,6)$ we have as $k\to\infty$
$$\overline\alpha_k(t)\to \alpha(t)\quad\text{in $L^\theta(\Omega)$}.$$ 
Moreover, by~\eqref{eq:k_unif_bound_gl} there exists a constant $C>0$, independent on $k$, such that
$$r_H\mathcal V(\overline p_k;0,T)\le\mathcal V_{\mathcal H}(\overline p_k;0,T)=\sum_{j=1}^k\mathcal H(p_k^j-p_k^{j-1})\le C,\qquad\|\overline p_k(0)\|_{H^1}=\|p^0\|_{H^1}\le C.$$ 
Hence, there exist a further subsequence, still denoted by $\overline p_k$, and a function $p\colon[0,T]\to H^1(\Omega;\M)$ with bounded variation in $L^1(\Omega;\M)$ such that  for all $t\in[0,T]$ as $k\to\infty$
$$\overline p_k(t)\rightharpoonup p(t)\quad\text{in $H^1(\Omega;\M)$}.$$ 
In particular, by~\eqref{eq:Hdis-lsc} we get
$$\mathcal V_{\mathcal H}(p;0,T)\le \liminf_{k\to\infty}\mathcal V_{\mathcal H}(\overline p_k;0,T)\le C,$$
which yields that $p$ has bounded $\mathcal H$-variation. Furthermore, for all $t\in[0,T]$ and $\theta\in[1,6)$ we have as $k\to\infty$
$$\overline p_k(t)\to p(t)\quad\text{in $L^\theta(\Omega;\M)$}.$$ 

{\bf Step 2}. For all fixed $t\in[0,T]$ there exist a subsequence $k_j$ and functions $e^*\in L^2(\Omega;\M)$ and $u^*\in H^1(\Omega;\R^n)$, depending on $t\in[0,T]$, such that as $j\to\infty$
$$\overline e_{k_j}(t)\rightharpoonup e^*\quad\text{in $L^2(\Omega;\M)$},\qquad \overline u_{k_j}(t)\to u^*\quad\text{in $H^1(\Omega;\R^n)$}.$$
By construction $(\overline \alpha_k(t),\overline u_k(t),\overline e_k(t),\overline p_k(t))\in H^1(\Omega;[0,1])\times \mathcal A(\overline w_k(t))$ and in view of~\eqref{eq:gl_stki}
\begin{align*}
\mathcal E(\overline\alpha_k(t),\overline e_k(t),\overline p_k(t))\le \mathcal E(\beta,\eta,q)+\mathcal H(q-\overline p_k(t))\quad\text{for all $(\beta,v,\eta,q)\in \mathcal D(\overline \alpha_k(t))\times \mathcal A(\overline w_k(t))$}.
\end{align*}
By writing the inequality for $k_j$ and using Lemma~\ref{lem:stab} we deduce
\begin{align*}
\mathcal E(\alpha(t),e^*,p(t))\le \mathcal E(\beta,\eta,q)+\mathcal H(q-p(t))\quad\text{for all $(\beta,v,\eta,q)\in \mathcal D(\alpha(t))\times \mathcal A(w(t))$}.
\end{align*}
Therefore, we can argue as in Lemma~\ref{lem:measurability} to derive that $(u^*,e^*)$ is uniquely determined for all $t\in[0,T]$. In particular, by defining $u(t)\coloneqq u^*$ and $e(t)\coloneqq e^*$ we deduce that as $k\to\infty$
$$\overline e_k(t)\rightharpoonup e(t)\quad\text{in $L^2(\Omega;\M)$},\qquad \overline u_k(t)\to u(t)\quad\text{in $H^1(\Omega;\R^n)$},$$
and the quadruple $(\alpha,u,e,p)$ satisfies (qs0), (qs1), and~\eqref{eq:initial}.

{\bf Step 3}. By Lemma~\ref{lem:measurability}, we deduce that $(\alpha,u,e,p)$ are strongly measurable from $[0,T]$ into $H^1(\Omega)\times\mathcal A$. We define $\underline e_k\colon [0,T]\to L^2(\Omega;\M)$ as 
\begin{align*}
\underline e_k(t)\coloneqq e_k^{i-1}\quad\text{for $t\in [t_k^{i-1},t_k^i)$, $i=1,\dots,k$},\qquad \underline e_k(T)=e_k^k.
\end{align*}
By the discrete energy inequality~\eqref{eq:gl_denin}, for all $k\in\N$, $t\in (t_k^{i-1},t_i]$, and $i=1,\dots,k$ we have
\begin{equation}\label{eq:dis-qs-en-in}
\mathcal E(\overline\alpha_k(t),\overline e_k(t),\overline p_k(t))+\mathcal V_{\mathcal H}(\overline p_k;0,t_k^i)\le \mathcal E(\alpha^0,e^0,p^0)+\int_0^{\overline\tau_k(t)}(\C \underline e_k(r),\EE \dot w(r))_2\,\de r+\delta_k.
\end{equation}
Since $\underline e_k(t)=\overline e_k(t-\tau_k)$ for a.e.\ $t\in [\tau_k,T]$, we derive that as $k\to\infty$
$$\underline e_k \rightharpoonup e\quad\text{in $L^2((0,T);L^2(\Omega;\M))$}.$$
Hence, we can use the convergences of Steps 1 and 2 to pass to the limit as $k\to\infty$ in~\eqref{eq:dis-qs-en-in}, and to obtain that the quadruple $(\alpha,u,e,p)$ satisfies for all $t\in[0,T]$
$$\mathcal E(\alpha(t),e(t),p(t))+\mathcal V_{\mathcal H}(p;0,t)\le \mathcal E(\alpha^0,e^0,p^0)+\int_0^t(\C e(r),\EE \dot w(r))_2\,\de r,$$
being all the terms on the left-hand side are sequentially lower semicontinuous with respect to the convergences of $\overline \alpha_k,\overline e_k,\overline p_k$. It remains to prove the opposite inequality.

We fix $t\in(0,T]$ and for all $k\in\N$ we consider a collection of points $\{s_k^i\}_{i=0}^k$ in $[0,t]$ satisfying
$$0=s_k^0<s_k^1<\cdots<s_k^{k-1}<s_k^k=t,\quad\lim_{k\to\infty}\max_{i=1,\dots,k}|s_k^i-s_k^{i-1}|\to 0.$$
For all $i=1,\dots,k$ we have $$(\alpha(s_k^i),u(s_k^i)-w(s_k^i)+w(s_k^{i-1}),e(s_k^i)-\EE w(s_k^i)+\EE w(s_k^{i-1}),p(s_k^i))\in \mathcal D(\alpha(s_k^{i-1}))\times \mathcal A(w(s_k^{i-1})).$$
By using (qs1) in $t=s_k^{i-1}$ for all $k\in\N$ and $i=1,\dots,k$ we deduce
 \begin{align*}
\mathcal E(\alpha(s_k^{i-1}),e(s_k^{i-1}),p(s_k^{i-1}))&\le \mathcal E(\alpha(s_k^i),e(s_k^i)-\EE w(s_k^i)+\EE w(s_k^{i-1}),p(s_k^i))+\mathcal H(p(s_k^i)-p(s_k^{i-1}))\\
&=\mathcal E(\alpha(s_k^i),e(s_k^i),p(s_k^i))+\mathcal H(p(s_k^i)-p(s_k^{i-1}))\\
&\quad-(\C e(s_k^i),\EE w(s_k^i)-\EE w(s_k^{i-1}))_2+\mathcal Q(\EE w(s_k^i)-\EE w(s_k^{i-i}))\\
&\le \mathcal E(\alpha(s_k^i),e(s_k^i),p(s_k^i))+\mathcal H(p(s_k^i)-p(s_k^{i-1}))-\int_{s_k^{i-1}}^{s_k^i}(\C e(s_k^i),\EE \dot w(r))_2\,\de r\\
&\quad +\max_{i=1,\dots,k}|s_k^i-s_k^{i-1}|\frac{\gamma_2}{2}\int_0^t\|\EE \dot w(r)\|_2^2\,\de r.
\end{align*}
By summing over $i=1,\dots,k$ we get
\begin{align}
\mathcal E(\alpha^0,e^0,p^0)&\le \mathcal E(\alpha(t),e(t),p(t))+\mathcal V_{\mathcal H}(p;0,t)-\int_0^t(\C \tilde e_k(r),\EE \dot w(r))_2\,\de r\nonumber\\
&\quad +\max_{i=1,\dots,k}|s_k^i-s_k^{i-1}|\frac{\gamma_2}{2}\int_0^t\|\EE \dot w(r)\|_2^2\,\de r,\label{eq:other_in}
\end{align}
where $\tilde e_k\colon [0,T]\to L^2(\Omega;\M)$ is defined as
$$\tilde e_k(t)\coloneqq e(s_k^i)\quad\text{for all $t\in[s_k^{i-1},s_k^i)$},\qquad \tilde e_k(T)\coloneqq e(T).$$
Since $\tilde e_k(r)\to e(r)$ in $L^2(\Omega;\M)$ for a.e.\ $r\in[0,t]$ as $k\to\infty$, we can use the Dominated Convergence Theorem to derive
\begin{equation*}
\lim_{k\to\infty}\int_0^t(\C \tilde e_k(r),\EE \dot w(r))_2\,\de r=\int_0^t(\C e(r),\EE \dot w(r))_2\,\de r.
\end{equation*}
By passing to the limit as $k\to\infty$ in~\eqref{eq:other_in} we get the other opposite energy inequality. Hence, $(\alpha,u,e,p)$ satisfies (qs2) and it is a global quasistatic evolution from $[0,T]$ into $H^1(\Omega;[0,1])\times\mathcal A$. 
\end{proof}

\subsection{Continuity in time}

In this subsection we investigate the regularity in time of global quasistatic evolutions $(\alpha,u,e,p)$ in the sense of Definition~\ref{def:QSE}. More precisely, we show that every evolution is continuous at every continuity point $t$ of $\alpha\colon [0,T]\to L^1(\Omega)$. We point out that such a property improves the continuity result of Lemma~\ref{lem:measurability}.

We start by proving several preliminary results. In the first one we show that every global quasistatic evolutions is strongly continuous up to a countable set.

\begin{lemma}\label{lem:continuity}
Assume~\eqref{eq:d}--\eqref{eq:B3} and~\eqref{eq:K}. Let $(\alpha,u,e,p)$ be a global quasistatic evolution from $[0,T]$ into $H^1(\Omega;[0,1])\times\mathcal A$. There exists a countable set $N\subset [0,T]$ such that the quadruple $(\alpha,u,e,p)$ is strongly continuous from $[0,T]\setminus N$ into $H^1(\Omega)\times \mathcal A$. 
\end{lemma}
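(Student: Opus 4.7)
The plan is to identify a countable exceptional set $N$ and then, at each $t_0\in[0,T]\setminus N$, use the energy-dissipation balance (qs2) to upgrade the weak continuity already provided by Lemma~\ref{lem:measurability} to strong continuity, exploiting the fact that continuity of the total energy forces convergence of each nonnegative weakly lower semicontinuous constituent, via a standard Radon–Riesz argument in the relevant Hilbert spaces.

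First I would construct $N$ as the union of three countable sets: $N_1$, the $L^2(\Omega)$-discontinuities of the monotone map $t\mapsto\alpha(t)$ (countable as in Lemma~\ref{lem:measurability}); $N_2$, the $L^1(\Omega;\M)$-discontinuities of $t\mapsto p(t)$, which are countable since by~\eqref{eq:rHvar} the map has bounded total variation; and $N_3$, the (countable) discontinuity set of the nondecreasing real function $t\mapsto \mathcal V_{\mathcal H}(p;0,t)$. Rearranging (qs2) as
\begin{equation*}
\mathcal E(\alpha(t),e(t),p(t)) = \mathcal E(\alpha^0,e^0,p^0) - \mathcal V_{\mathcal H}(p;0,t) + \int_0^t(\C e(s),\EE \dot w(s))_2\,\de s,
\end{equation*}
the integral term is absolutely continuous in $t$, so continuity of $\mathcal V_{\mathcal H}(p;0,\cdot)$ at $t_0\notin N_3$ implies continuity of $t\mapsto \mathcal E(\alpha(t),e(t),p(t))$ at every $t_0\in[0,T]\setminus N$.

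Next, fix $t_0\in[0,T]\setminus N$ and an arbitrary sequence $t_k\to t_0$. Thanks to the uniform bound~\eqref{eq:unif_bound} and the weak continuity of Lemma~\ref{lem:measurability} (applied along subsequences with the uniqueness argument therein), we obtain $\alpha(t_k)\rightharpoonup \alpha(t_0)$ in $H^1(\Omega)$, $p(t_k)\rightharpoonup p(t_0)$ in $H^1(\Omega;\M)$, $e(t_k)\rightharpoonup e(t_0)$ in $L^2(\Omega;\M)$ and $u(t_k)\rightharpoonup u(t_0)$ in $H^1(\Omega;\R^n)$. By the compact embedding $H^1(\Omega)\hookrightarrow L^\theta(\Omega)$ for $\theta<6$, the terms $D(\alpha(t_k))$ and $\tilde Q(\alpha(t_k),p(t_k))$ converge to the corresponding values at $t_0$ (using~\eqref{eq:d} and the estimates of Remark~\ref{rem:emb}). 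The three remaining pieces $\mathcal Q(e)$, $\|\nabla\alpha\|_2^2$, $\|\nabla p\|_2^2$ are each nonnegative and (weakly) lower semicontinuous along the established weak convergences; since their sum converges to the right limit (by Step~2), an elementary argument (convergence of a sum of lower semicontinuous nonnegative terms forces individual convergence) yields
\begin{equation*}
\mathcal Q(e(t_k))\to\mathcal Q(e(t_0)),\qquad \|\nabla\alpha(t_k)\|_2\to\|\nabla\alpha(t_0)\|_2,\qquad \|\nabla p(t_k)\|_2\to\|\nabla p(t_0)\|_2.
\end{equation*}

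Finally, I would upgrade weak convergence to strong via the Radon–Riesz property: the equivalence~\eqref{eq:C2} turns convergence of the quadratic form $\mathcal Q$ and weak $L^2$-convergence of $e(t_k)$ into strong $L^2$-convergence; convergence of $\|\nabla\alpha(t_k)\|_2$ and $\|\nabla p(t_k)\|_2$ together with weak $H^1$-convergence (and strong $L^2$-convergence from compactness) gives strong $H^1$-convergence of $\alpha(t_k)$ and $p(t_k)$. For the displacement, since $u(t_k)-w(t_k)\in H^1_0(\Omega;\R^n)$ and $\EE(u(t_k)-w(t_k)) = e(t_k)+p(t_k)-\EE w(t_k)$ converges strongly in $L^2$, Korn's inequality in $H^1_0$ yields strong $H^1$-convergence of $u(t_k)-w(t_k)$, hence of $u(t_k)$ by adding $w(t_k)\to w(t_0)$ in $H^1$. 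Since the limit is independent of the extracted subsequence, the full sequence converges strongly. The main obstacle I expect is purely bookkeeping, namely checking that the ``no concentration'' of the three lower semicontinuous pieces really follows from continuity of their sum; once this is in place, the rest is standard Hilbert space convergence.
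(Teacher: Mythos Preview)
Your proposal is correct and follows essentially the same approach as the paper: identify a countable exceptional set via the discontinuities of the monotone dissipation $t\mapsto\mathcal V_{\mathcal H}(p;0,t)$ (together with those already produced in Lemma~\ref{lem:measurability}), use (qs2) to get continuity of the total energy, then split the energy into pieces that converge by compact embedding ($D$ and $\tilde Q$) and pieces that are weakly lower semicontinuous ($\mathcal Q(e)$, $\|\nabla\alpha\|_2^2$, $\|\nabla p\|_2^2$), and finally invoke Radon--Riesz and Korn. The paper makes the ``no concentration'' step explicit via the elementary inequality $\liminf(a_k+b_k)\le \limsup a_k+\liminf b_k\le \limsup(a_k+b_k)$, which is exactly the bookkeeping you flagged; your three-set decomposition of $N$ is a harmless variant of the paper's two-set version.
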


\begin{proof}
From Lemma~\ref{lem:measurability} we know that there exists a countable set $N_1\subset [0,T]$ such that every global quasistatic evolution $(\alpha,u,e,p)$ is weakly continuous from $[0,T]\setminus N$ into $H^1(\Omega)\times \mathcal A$. Moreover, since the map $t\mapsto \mathcal V_{\mathcal H}(p;0,t)$ from $[0,T]$ into $\R$ is increasing, there exists a countable set $N_2\subset [0,T]$ such that $t\mapsto \mathcal V_{\mathcal H}(p;0,t)$ is continuous from $[0,T]\setminus N_2$ into $\R$. In particular, the  energy equality (qs2) yields that the map $t\mapsto \mathcal E(\alpha(t),e(t),p(t))$ is continuous from $[0,T]\setminus N_2$ into $\R$.

Let us define $N\coloneqq N_1\cup N_2$. We fix $t_\infty\in[0,T]\setminus N$ and a sequence $(t_k)_k\subset [0,T]\setminus N$ such that $t_k\to t_\infty$ as $k\to\infty$. Thanks to Lemma~\ref{lem:measurability} we have
\begin{align*}
&\alpha(t_k)\rightharpoonup \alpha(t_\infty)\quad\text{in $H^1(\Omega)$ as $k\to\infty$},& &u(t_k)\rightharpoonup u(t_\infty)\quad\text{in $H^1(\Omega;\R^n)$ as $k\to\infty$},\\
& e(t_k)\rightharpoonup e(t_\infty)\quad\text{in $L^2(\Omega;\M)$ as $k\to\infty$},& & p(t_k)\rightharpoonup p(t_\infty)\quad\text{in $H^1(\Omega;\M)$ as $k\to\infty$},
\end{align*}
and by the Sobolev Embedding Theorem (see also Remark~\ref{rem:emb}), for all $\theta\in [1,6)$ we get
\begin{align*}
&\alpha(t_k)\to \alpha(t_\infty)\quad\text{in $L^\theta(\Omega)$ as $k\to\infty$},& &u(t_k)\to u(t_\infty)\quad\text{in $L^\theta(\Omega;\R^n)$ as $k\to\infty$},\\
&p(t_k)\to p(t_\infty)\quad\text{in $L^\theta(\Omega;\M)$ as $k\to\infty$} & &w(t_k)\to w(t_\infty)\quad\text{in $H^1(\Omega;\R^n)$ as $k\to\infty$}.
\end{align*}  
In particular, we derive that
\begin{align*}
&\mathcal E(\alpha(t_\infty),e(t_\infty),p(t_\infty))\\
&\le\liminf_{k\to\infty}\mathcal Q(e(t_k))+\lim_{k\to\infty}D(\alpha(t_k))+\liminf_{k\to\infty}\|\nabla\alpha(t_k)\|_2^2+\lim_{k\to\infty}\tilde Q(\alpha(t_k),p(t_k))+\liminf_{k\to\infty}\|\nabla p(t_k)\|_2^2\\
&\le \lim_{k\to\infty}\mathcal E(\alpha(t_k),e(t_k),p(t_k))=\mathcal E(\alpha(t_\infty),e(t_\infty),p(t_\infty)).
\end{align*}
This implies that
\begin{align*}
Q(e(t_\infty))=\liminf_{k\to\infty}\mathcal Q(e(t_k)),\qquad \|\nabla\alpha(t_\infty)\|_2^2=\liminf_{k\to\infty}\|\nabla\alpha(t_k)\|_2^2,\qquad\|\nabla p(t_k)\|_2^2=\liminf_{k\to\infty}\|\nabla p(t_k)\|_2^2.    
\end{align*}
If we use also that
$$\liminf_{k\to\infty}(a_k+b_k)\le\limsup_{k\to\infty}a_k+\liminf_{k\to\infty}b_k\le \limsup_{k\to\infty}(a_k+b_k)\quad\text{for all two sequences $(a_k)_k,(b_k)_k\subset\R$},$$
we conclude that
\begin{align*}
Q(e(t_\infty))=\lim_{k\to\infty}\mathcal Q(e(t_k)),\qquad \|\nabla\alpha(t_\infty)\|_2^2=\lim_{k\to\infty}\|\nabla\alpha(t_k)\|_2^2,\qquad\|\nabla p(t_k)\|_2^2=\lim_{k\to\infty}\|\nabla p(t_k)\|_2^2.    
\end{align*}
Hence, thanks to~\eqref{eq:C2} we obtain the following strong convergences:
\begin{align*}
&\alpha(t_k)\to \alpha(t_\infty)\quad\text{in $H^1(\Omega)$ as $k\to\infty$},&
& e(t_k)\to e(t_\infty)\quad\text{in $L^2(\Omega;\M)$ as $k\to\infty$},\\
& p(t_k)\to p(t_\infty)\quad\text{in $H^1(\Omega;\M)$ as $k\to\infty$}.
\end{align*}
Finally, by Korn Inequality, we derive
$$\|\nabla u(t_k)-\nabla u(t_\infty)\|_2\le C\left(\|u(t_k)-u(t_\infty)\|_2+\|e(t_k)-e(t_\infty)\|_2+\|p(t_k)-p(t_\infty)\|_2\right)\to 0\quad\text{as $k\to\infty$}.$$
Therefore, the quadruple $(\alpha,u,e,p)$ is strongly continuous from $[0,T]\setminus N$ into $H^1(\Omega)\times \mathcal A$.
\end{proof}

Moreover, we introduce the following definition.

\begin{definition}
For all $\alpha\in H^1(\Omega;[0,1])$, $e\in L^2(\Omega;\M)$, and $p\in H^1(\Omega;\M)$ we define 
\begin{align*}
\partial_\alpha \mathcal E(\alpha,e,p)[\beta]\coloneqq &\int_\Omega \dot{d}(\alpha(x))\beta(x) \,\de x+2\int_\Omega \nabla\alpha(x)\cdot\nabla\beta(x) \,\de x+\int_\Omega \dot{\B}(\alpha(x))\beta(x)p(x):p(x) \,\de x
\end{align*}
 for all $\beta\in H^1(\Omega)$.
 \end{definition}
 
 By Remark~\ref{rem:emb} we deduce that $\partial_\alpha \mathcal E(\alpha,e,p)[\beta]$ is well defined, and we can identify $\partial_\alpha\mathcal E(\alpha,e,p)$ as an element of $ (H^1(\Omega))'$ by setting
 \begin{align*}
 \langle \partial_\alpha\mathcal E(\alpha,e,p),\beta\rangle\coloneqq \partial_\alpha \mathcal E(\alpha,e,p)[\beta]\quad\text{for all $\beta\in H^1(\Omega)$}.
 \end{align*}
 
In the next lemma we show that every global quasistatic evolution satisfies the following variational inequalities.

\begin{lemma}\label{lem:qs_var_ineq}
Assume~\eqref{eq:d}--\eqref{eq:B3} and~\eqref{eq:K}. Let $(\alpha,u,e,p)$ be a global quasistatic evolution from $[0,T]$ into $H^1(\Omega;[0,1])\times\mathcal A$. 
\begin{itemize}
\item[(a)] For all $t\in[0,T]$ we have
\begin{equation}\label{eq:st_1}
\partial_\alpha \mathcal E(\alpha(t),e(t),p(t))[\beta]\ge 0\quad\text{for all $\beta\in H^1(\Omega)$ with $\beta\le 0$}.
\end{equation}
\item[(b)] For all $t\in[0,T]$ and $(v,\eta,q)\in \mathcal A(0)$ we have
\begin{equation}\label{eq:st_2}
-\mathcal H(q)\le (\C e(t),\eta)_2+2(\B(\alpha(t))p(t),q)_2+2(\nabla p(t),\nabla q)_2\le \mathcal H(-q).
\end{equation}
\item[(c)] For all $t\in[0,T]$ and $(v,\eta,q)\in \mathcal A(w(t))$
\begin{align}
&\mathcal Q(e(t))+\mathcal Q(\eta-e(t))+\tilde Q(\alpha(t),p(t))+\tilde Q(\alpha(t),q-p(t))+\|\nabla p(t)\|_2^2+\|\nabla q-\nabla p(t)\|_2^2\nonumber\\
&\le \mathcal Q(\eta)+\tilde Q(\alpha(t),q)+\|\nabla q\|_2^2+\mathcal H(q-p(t)).\label{eq:st_3}
\end{align}
\end{itemize}
\end{lemma}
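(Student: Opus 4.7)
The plan is to derive all three inequalities by inserting carefully chosen one-parameter families of test states in the global stability condition (qs1) and passing to the limit. In~(a) only the damage variable is perturbed; in~(b) only the kinematic triple; and~(c) follows from~(b) by a polarization rearrangement.

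For~(a), fix $\beta\in H^1(\Omega)$ with $\beta\le 0$ and set $\alpha_\lambda\coloneqq(\alpha(t)+\lambda\beta)\vee 0$ for $\lambda>0$. Since $\beta\le 0$, we have $\alpha_\lambda\le\alpha(t)$ pointwise and $\alpha_\lambda\in[0,1]$, so $\alpha_\lambda\in\mathcal D(\alpha(t))$. Testing~(qs1) with $(\alpha_\lambda,u(t),e(t),p(t))$ the terms independent of $\alpha$ cancel and $\mathcal H(0)=0$, giving
\[
D(\alpha_\lambda)-D(\alpha(t))+\|\nabla\alpha_\lambda\|_2^2-\|\nabla\alpha(t)\|_2^2+\tilde Q(\alpha_\lambda,p(t))-\tilde Q(\alpha(t),p(t))\ge 0.
\]
I then divide by $\lambda$ and pass to the limsup as $\lambda\to 0^+$. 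A pointwise computation shows $(\alpha_\lambda-\alpha(t))/\lambda\to\beta$ a.e.\ on $\{\alpha(t)>0\}$ and $=0$ on $\{\alpha(t)=0\}$, with the uniform bound $|(\alpha_\lambda-\alpha(t))/\lambda|\le|\beta|$. Using the $C^{1,1}$ regularity of $d$ and $\B$ and dominated convergence, the $D$ and $\tilde Q$ increments converge to $\int_{\{\alpha(t)>0\}}\dot d(\alpha(t))\beta\,\de x$ and $\int_{\{\alpha(t)>0\}}\dot\B(\alpha(t))\beta p(t)\colon p(t)\,\de x$. For the Dirichlet term, from $\nabla\alpha_\lambda=\chi_{\{\alpha(t)+\lambda\beta>0\}}(\nabla\alpha(t)+\lambda\nabla\beta)$ one obtains
\[
\frac{\|\nabla\alpha_\lambda\|_2^2-\|\nabla\alpha(t)\|_2^2}{\lambda}\le\frac{\|\nabla\alpha(t)+\lambda\nabla\beta\|_2^2-\|\nabla\alpha(t)\|_2^2}{\lambda}\to 2\int_\Omega\nabla\alpha(t)\cdot\nabla\beta\,\de x
\]
after discarding a nonpositive remainder. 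Finally, since $\nabla\alpha(t)=0$ a.e.\ on $\{\alpha(t)=0\}$ and since $\dot d(0)\beta\ge 0$ and $\dot\B(0)\beta p(t)\colon p(t)\ge 0$ there (by~\eqref{eq:d} and~\eqref{eq:B1} combined with $\beta\le 0$), the integrals over $\{\alpha(t)>0\}$ can be extended to $\Omega$ with a further nonnegative contribution, yielding $0\le\partial_\alpha\mathcal E(\alpha(t),e(t),p(t))[\beta]$.

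For~(b), fix $(v,\eta,q)\in\mathcal A(0)$ and $\lambda>0$. Then $(u(t)+\lambda v,e(t)+\lambda\eta,p(t)+\lambda q)\in\mathcal A(w(t))$ and~(qs1), with unchanged damage, gives $0\le\mathcal E(\alpha(t),e(t)+\lambda\eta,p(t)+\lambda q)-\mathcal E(\alpha(t),e(t),p(t))+\mathcal H(\lambda q)$. Expanding the quadratic functionals and using the 1-homogeneity $\mathcal H(\lambda q)=\lambda\mathcal H(q)$, dividing by $\lambda$ and letting $\lambda\to 0^+$ produces the lower bound in~\eqref{eq:st_2}; the upper bound follows by replacing $(v,\eta,q)$ with $(-v,-\eta,-q)\in\mathcal A(0)$. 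For~(c), given $(v,\eta,q)\in\mathcal A(w(t))$ the difference $(v-u(t),\eta-e(t),q-p(t))$ lies in $\mathcal A(0)$; applying the lower bound of~(b) to this triple yields
\[
(\C e(t),\eta-e(t))_2+2(\B(\alpha(t))p(t),q-p(t))_2+2(\nabla p(t),\nabla q-\nabla p(t))_2+\mathcal H(q-p(t))\ge 0.
\]
Then the polarization identity $\mathcal Q(\eta)=\mathcal Q(e(t))+\mathcal Q(\eta-e(t))+(\C e(t),\eta-e(t))_2$, its analogue for $\tilde Q(\alpha(t),\cdot)$, and the parallelogram identity for $\|\nabla\cdot\|_2^2$ convert~\eqref{eq:st_3} into exactly the inequality just established.

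The main obstacle lies in~(a): the pointwise constraint $\alpha_\lambda\in[0,1]$ prevents a plain linearization along $\beta$, and the resulting defect concentrates on $\{\alpha(t)=0\}$. The hypotheses $\dot d(0)\le 0$ and $\dot\B(0)\xi\colon\xi\le 0$ from~\eqref{eq:d} and~\eqref{eq:B1} are exactly what give this defect the right sign when $\beta\le 0$, so it can be absorbed in favor of the inequality.
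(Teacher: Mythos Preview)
Your proof is correct and follows essentially the same route as the paper: the same test functions $(\alpha(t)+\lambda\beta)^+$ in~(a) and $(u(t)+\lambda v,e(t)+\lambda\eta,p(t)+\lambda q)$ in~(b), the same use of the sign hypotheses $\dot d(0)\le 0$ and $\dot\B(0)\xi\colon\xi\le 0$ to handle the obstacle $\{\alpha(t)=0\}$, and the same polarization in~(c). The only cosmetic difference is in~(a): the paper first extends $d$ and $\B$ linearly to $(-\infty,0)$ so that $\mathcal E((\alpha(t)+\lambda\beta)^+,\cdot)\le\mathcal E(\alpha(t)+\lambda\beta,\cdot)$ and then differentiates the unconstrained path, whereas you compute the difference quotient of the truncated path directly and split into $\{\alpha(t)>0\}$ and $\{\alpha(t)=0\}$; both arguments encode exactly the same inequality.
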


\begin{proof}
(a) We first extend $d$ and $\mathbb B$ to $(-\infty,0)$ by setting
\begin{equation}\label{eq:dB_ext}
d(\alpha)\coloneqq d(0)+\alpha \dot{d}(0)\quad\text{and}\quad \mathbb B(\alpha)=\mathbb B(0)+\alpha\dot{\mathbb B}(0)\quad\text{for all $\alpha\in (-\infty,0)$}.    
\end{equation}
By construction, 
\begin{align*}
&d\in C^1((-\infty,1];[0,\infty))\cap C^{0,1}((-\infty,1];[0,\infty)),\\
 &\mathbb B\in C^1((-\infty,1];\Lin(\M;\M))\cap C^{0,1}((-\infty,1];\Lin(\M;\M)).
\end{align*}
In particular, by the Dominated Convergence Theorem for all $\alpha\in H^1(\Omega;[0,1])$, $e\in L^2(\Omega;\M)$, and $p\in H^1(\Omega;\M)$, and for all $\beta\in H^1(\Omega)$ with $\beta\le 0$ there exists
\begin{equation}\label{eq:par_der}
\lim_{\lambda\to 0^+}\frac{\mathcal E(\alpha+\lambda \beta,e,p)-\mathcal E(\alpha,e,p)}{\lambda}=\partial_\alpha \mathcal E(\alpha,e,p)[\beta].
\end{equation}

We fix $t\in[0,T]$. Let $\beta\in H^1(\Omega)$ with $\beta\le 0$  and let $\lambda>0$. Since $(\alpha(t)+\lambda\beta)^+\in \mathcal D(\alpha(t))$, by (qs1) we get
\begin{align*}
\mathcal E(\alpha(t),e(t),p(t))&\le \mathcal E((\alpha(t)+\lambda\beta)^+,e(t),p(t))
\end{align*}
Moreover, by~\eqref{eq:d},~\eqref{eq:B1},~\eqref{eq:dB_ext}, and the fact that
$\alpha(t)\ge 0$ we have
\begin{align*}
&D((\alpha(t)+\lambda\beta)^+)\le D(\alpha(t)+\lambda\beta),& &\|\nabla((\alpha(t)+\lambda\beta)^+)\|^2_2\le \|\nabla(\alpha(t)+\lambda\beta)\|_2^2,\\
&\tilde Q((\alpha(t)+\lambda \beta)^+,p(t))\le \tilde Q(\alpha(t)+\lambda \beta,p(t)).  & &
\end{align*}
Hence, we derive
\begin{align*}
\mathcal E(\alpha(t),e(t),p(t))\le \mathcal E(\alpha(t)+\lambda\beta,e(t),p(t)).
\end{align*}
By dividing for $\lambda>0$, sending $\lambda\to 0^+$, and using~\eqref{eq:par_der} we get~\eqref{eq:st_1}.

(b) For all $(v,\eta,q)\in \mathcal A(0)$ and $\lambda>0$ we test (qs1) with 
\begin{equation*}
(\alpha(t),u(t)+\lambda v,e(t)+\lambda \eta,p(t)+\lambda q)\in\mathcal D(\alpha(t))\times\mathcal A(w(t)),
\end{equation*}
and we get
\begin{align*}
\mathcal Q(e(t))+\tilde Q(\alpha(t),p(t))+\|\nabla p(t)\|_2^2\le \mathcal Q(e(t)+\lambda \eta)+\tilde Q(\alpha(t),p(t)+\lambda q)+\|\nabla (p(t)+\lambda q)\|_2^2+\lambda\mathcal H(q).
\end{align*}
By dividing for $\lambda>0$ and sending $\lambda\to 0^+$ we get one inequality of~\eqref{eq:st_2}. To get the other inequality, it is enough to replace $(v,\eta,q)\in\mathcal A(0)$ with $(-v,-\eta,-q)\in\mathcal A(0)$.

(c) By~\eqref{eq:st_2} for all $(v,\eta,q)\in \mathcal A(w(t))$ we get
\begin{align*}
&\mathcal Q(e(t))+\mathcal Q(\eta-e(t))-\mathcal Q(\eta)+\tilde Q(\alpha(t),p(t))+\tilde Q(\alpha(t),q-p(t))-\tilde Q(\alpha(t),q)\\
&\quad+\|\nabla p(t)\|_2^2+\|\nabla q-\nabla p(t)\|_2^2-\|\nabla q\|_2^2\\
&=(\C e(t),e(t)-\eta)_2+2(\B(\alpha(t))p(t),p(t)-q)_2+2(\nabla p(t),\nabla p(t)-\nabla q)_2\le \mathcal H(q-p(t)),
\end{align*}
which is~\eqref{eq:st_3}.
\end{proof}

\begin{remark}
The assumptions $\dot d(0)\le 0$ and $\dot{\B}(0)\xi:\xi\le 0$ for all $\xi\in\M$ in~\eqref{eq:d} and~\eqref{eq:B1} are needed to deduce~\eqref{eq:st_1} for all $\beta\in H^1(\Omega)$ with $\beta\le 0$.
\end{remark}

By~\eqref{eq:st_2} we deduce that every global quasistatic evolution $(\alpha,u,e,p)$ satisfies 
\begin{equation}\label{eq:div0}
\div \C e(t)=0\quad\text{in $\mathcal D'(\Omega;\R^n)$ for all $t\in[0,T]$}.
\end{equation}
Indeed, since $(\phi,\EE \phi,0)\in\mathcal A(0)$ for all $\phi\in C_c^\infty(\Omega;\R^n)$ and $\mathcal H(0)=0$, by~\eqref{eq:st_2} we deduce
$$(\C e(t),\EE \phi)_2=0\quad\text{for all $\phi\in C_c^\infty(\Omega;\R^n)$},$$
which is~\eqref{eq:div0}. Let us recall also the following integration by parts formula.

\begin{lemma}
Let $e\in L^2(\Omega;\M)$ satisfy
\begin{align}\label{eq:divCe}
\div\C e=0\quad\text{in $\mathcal D'(\Omega;\R^n)$.}
\end{align} 
Then, for all $w\in H^1(\Omega;\R^n)$ and $(v,\eta,q)\in \mathcal A(w)$ we have
\begin{align}\label{eq:int_for}
(\C e,q)_2=-(\C e, \eta-\EE w)_2.
\end{align}
\end{lemma}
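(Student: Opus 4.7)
The plan is to rewrite the claimed identity as the vanishing of a single inner product and then to identify that inner product as the (extended) action of the distribution $\div \C e$.

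First I would use the admissibility condition $(v,\eta,q)\in\mathcal A(w)$, which encodes $\EE v=\eta+q$ in $\Omega$ and $v=w$ on $\partial\Omega$. Rewriting gives
\begin{equation*}
q+(\eta-\EE w)=\EE v-\EE w=\EE(v-w),
\end{equation*}
so that~\eqref{eq:int_for} is equivalent to
\begin{equation*}
(\C e,\EE(v-w))_2=0,
\end{equation*}
with $v-w\in H^1_0(\Omega;\R^n)$ since the boundary traces coincide.

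Next I would exploit the symmetry $\C_{ijkl}=\C_{jikl}$ from~\eqref{eq:C1}: this ensures $\C e\in L^2(\Omega;\M)$ takes values in symmetric matrices, so for any $\phi\in H^1(\Omega;\R^n)$ one has $(\C e,\EE\phi)_2=(\C e,\nabla\phi)_2$. It therefore suffices to show $(\C e,\nabla\phi)_2=0$ for every $\phi\in H^1_0(\Omega;\R^n)$. For test functions $\phi\in C_c^\infty(\Omega;\R^n)$ this is exactly the distributional identity $(\C e,\nabla\phi)_2=-\langle \div\C e,\phi\rangle=0$ furnished by~\eqref{eq:divCe}.

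Finally, the map $\phi\mapsto(\C e,\nabla\phi)_2$ is continuous on $H^1(\Omega;\R^n)$ with $|(\C e,\nabla\phi)_2|\le\|\C e\|_2\|\nabla\phi\|_2$, and $C_c^\infty(\Omega;\R^n)$ is dense in $H^1_0(\Omega;\R^n)$, so the identity extends from smooth compactly supported $\phi$ to all of $H^1_0(\Omega;\R^n)$, in particular to $\phi=v-w$. There is no serious obstacle in this proof: the only point to be careful about is invoking symmetry of $\C e$ to convert the symmetrized-gradient pairing into the full-gradient pairing before applying the distributional hypothesis, since $\div\C e=0$ as a distribution is meaningful only against $\nabla\phi$, not directly against $\EE\phi$.
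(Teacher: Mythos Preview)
Your proof is correct and follows essentially the same approach as the paper: both reduce the claim to $(\C e,\EE\psi)_2=0$ for $\psi\in H^1_0(\Omega;\R^n)$ via the admissibility condition, then obtain this from $\div\C e=0$ on test functions and extend by density. You are simply more explicit than the paper about the symmetry of $\C e$ needed to pass from $\nabla\phi$ to $\EE\phi$, which the paper leaves implicit.
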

\begin{proof}
Since $e\in L^2(\Omega;\M)$, by using a density argument in~\eqref{eq:divCe} we derive
$$ (\C e,\EE \psi)_2=0\quad\text{for all $\psi\in H^1_0(\Omega;\R^n)$}.$$
In particular, for all $w\in H^1(\Omega;\R^n)$ and $(v,\eta,q)\in \mathcal A(w)$, we get
$$ (\C e,q)_2=(\C e,\EE u-\eta)_2=-(\C e, \eta-\EE w)_2+(\C e, \EE (u-w))_2=-(\C e, \eta-\EE w)_2,$$
which is~\eqref{eq:int_for}.
\end{proof}

Moreover, we need the following version of Gronwall Inequality, whose proof is analogous to the one of~\cite[Lemma 5.3]{DMDSMo}.

\begin{lemma}\label{lem:gronw}
Let $a,b\in\R$ with $a<b$. Let $f\colon [a,b]\to [0,\infty)$ be a bounded measurable function, let $g\colon [a,b]\to [0,\infty)$ be a non decreasing function, and let $h\colon [a,b]\to [0,\infty)$ be an integrable function. Suppose that
\begin{align}\label{eq:gronw1}
f(t)^2\le g(t)^2+\int_a^tf(r)h(r)\,\de r+\left(\int_a^t h(r)\,\de r\right)^2\quad\text{for all $t\in[a,b]$}.
\end{align}
Then, 
\begin{align}\label{eq:gronw2}
f(t)\le g(t)+\frac{3}{2}\int_a^th(r)\,\de r\quad\text{for all $t\in[a,b]$}.    
\end{align}
\end{lemma}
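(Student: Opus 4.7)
The plan is to fix $t\in[a,b]$ and linearize the hypothesis so that the non-decreasing (hence non-smooth) function $g$ enters only as a constant. Set $H(s)\coloneqq\int_a^s h(r)\,\de r$, $J(s)\coloneqq\int_a^s f(r)h(r)\,\de r$, and define, on the interval $[a,t]$, the function
\[
\Phi(s)\coloneqq\sqrt{g(t)^2+H(s)^2+J(s)}.
\]
Since $g$ is non-decreasing, for every $s\le t$ one has $g(s)\le g(t)$, and then \eqref{eq:gronw1} gives
\[
f(s)^2\le g(s)^2+J(s)+H(s)^2\le g(t)^2+H(s)^2+J(s)=\Phi(s)^2,
\]
so $f(s)\le\Phi(s)$ on $[a,t]$. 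Trivially also $H(s)\le\Phi(s)$.

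Next I would differentiate. Since $H$ and $J$ are absolutely continuous and $\Phi^2\ge g(t)^2$, assume for the moment $g(t)>0$ so that $\Phi$ stays uniformly bounded away from zero; then $\Phi$ is absolutely continuous on $[a,t]$ and
\[
2\Phi(s)\Phi'(s)=2H(s)h(s)+f(s)h(s)\quad\text{for a.e.\ $s\in[a,t]$}.
\]
Using $H\le\Phi$ and $f\le\Phi$ from the previous step, this yields
\[
2\Phi(s)\Phi'(s)\le 2\Phi(s)h(s)+\Phi(s)h(s)=3\Phi(s)h(s),
\]
i.e.\ $\Phi'(s)\le\tfrac{3}{2}h(s)$ a.e. Integrating on $[a,t]$ and noting $\Phi(a)=g(t)$ (because $H(a)=J(a)=0$), one obtains
\[
f(t)\le\Phi(t)\le\Phi(a)+\tfrac{3}{2}H(t)=g(t)+\tfrac{3}{2}H(t),
\]
which is \eqref{eq:gronw2}. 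The case $g(t)=0$ is handled by replacing $g$ with $g+\varepsilon$ (which is still non-decreasing and still satisfies \eqref{eq:gronw1} with a slightly larger right-hand side), applying the previous argument, and letting $\varepsilon\to 0^+$.

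The only real obstacle is the potential failure of absolute continuity of $\Phi$ at points where $\Phi$ vanishes; freezing $g$ at its value $g(t)$ avoids differentiating the non-smooth part of $g$, while the $\varepsilon$-perturbation handles the degenerate case. The structural point making the constant $\tfrac{3}{2}$ appear sharp in this computation is the splitting $2Hh+fh\le(2+1)\Phi h$, i.e.\ one factor of $h$ from the $H^2$ term plus a half-factor from the $J$ term.
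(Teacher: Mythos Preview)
Your proof is correct and follows essentially the same route as the paper: fix the target point, freeze $g$ at its value there, and integrate a differential inequality for the square root of the right-hand side of \eqref{eq:gronw1}. The only cosmetic differences are that the paper also freezes $\int_a^\cdot h$ at its value at the target point (so the radicand becomes $\gamma_0+V$ with a single unknown $V$), and it disposes of the degenerate case $\gamma_0=0$ by observing directly that then $h=0$ a.e.\ on $[a,t_0]$, rather than by an $\varepsilon$-shift.
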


\begin{proof}
Let $t_0\in[a,b]$ be fixed. Define
\begin{equation}\label{eq:gamma0}
\gamma_0\coloneqq g(t_0)^2+\left(\int_a^{t_0} h(r)\,\de r\right)^2.    
\end{equation}
If $\gamma_0=0$, then $g(t_0)=0$ and $h=0$ a.e.\ on $[0,t_0]$, which implies that $f(t_0)=0$ by~\eqref{eq:gronw1}, and so~\eqref{eq:gronw2} is satisfied. Therefore, we may assume that $\gamma_0>0$ and we define the function
$$V(t)\coloneqq\int_a^tf(r)h(r)\,\de r\quad\text{for all $r\in[a,t_0]$}.$$
By assumptions $V\in AC([a,t_0])$ and $\dot{V}(t)=f(t)h(t)$ for a.e.\ $t\in[a,t_0]$. Therefore, thanks to~\eqref{eq:gronw1} we derive that $\dot{V}(t)\le h(t)\sqrt{\gamma_0+V(t)}$ for a.e.\ $t\in[a,t_0]$. Integrating between $a$ and $t_0$, and using~\eqref{eq:gamma0} we get
$$2\sqrt{V(t_0)+\gamma_0}\le 2\sqrt{\gamma_0}+\int_a^{t_0}h(r)\,\de r\le 2g(t_0)+3\int_a^{t_0}h(r)\,\de r.$$
On the other hand, by~\eqref{eq:gronw1} we derive
$f(t_0)\le \sqrt{V(t_0)+\gamma_0}$, and so 
$$2f(t_0)\le 2g(t_0)+3\int_a^{t_0}h(r)\,\de r,$$
which gives~\eqref{eq:gronw2}.
\end{proof}

We can finally prove the strong continuity of a global quasistatic evolution $(\alpha,u,e,p)$ from $[0,T]$ into $H^1(\Omega)\times \mathcal A$ at every continuity point $t$ of $\alpha\colon [0,T]\to L^1(\Omega)$. 

\begin{theorem}\label{thm:cont_time}
Assume~\eqref{eq:d}--\eqref{eq:B3} and~\eqref{eq:K}. Let $(\alpha,u,e,p)$ be a global quasistatic evolution from $[0,T]$ into $H^1(\Omega;[0,1])\times \mathcal A$. Then, there exists a constant $C>0$ such that for all $t_1,t_2\in[0,T]$ with $t_1<t_2$ we have
\begin{align}
\|\alpha(t_2)-\alpha(t_1)\|_{H^1}&+\|e(t_2)-e(t_1)\|_2+\|p(t_2)-p(t_1)\|_{H^1}\nonumber\\
&\le C\left(\|\alpha(t_2)-\alpha(t_1)\|_1+\int_{t_1}^{t_2}\|\EE \dot w(r)\|_2\,\de r\right),\label{eq:cont1}\\
\|u(t_2)-u(t_1)\|_{H^1}&\le C\left(\|\alpha(t_2)-\alpha(t_1)\|_1+\int_{t_1}^{t_2}\|\dot w(r)\|_{H^1}\,\de r\right).\label{eq:cont2}
\end{align}
\end{theorem}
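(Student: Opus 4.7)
Following the strategy of~\cite{DMDSMo} for absolute continuity in rate-independent systems, fix $t_1 \in [0, T]$ and, for $t \in [t_1, T]$, set $\Delta \alpha = \alpha(t) - \alpha(t_1) \le 0$, $\Delta e = e(t) - e(t_1)$, $\Delta p = p(t) - p(t_1)$, $\Delta w = w(t) - w(t_1)$. The goal is to derive a Gronwall-type inequality
\[
f(t)^2 \le C_1 \|\Delta \alpha\|_1^2 + C_2 \int_{t_1}^t f(r) \|\EE \dot w(r)\|_2 \,\de r + C_3 \Bigl(\int_{t_1}^t \|\EE \dot w(r)\|_2 \,\de r\Bigr)^2
\]
for $f(t) \coloneqq \|\alpha(t) - \alpha(t_1)\|_{H^1} + \|e(t) - e(t_1)\|_2 + \|p(t) - p(t_1)\|_{H^1}$; then Lemma~\ref{lem:gronw} yields~\eqref{eq:cont1}, and~\eqref{eq:cont2} follows by Korn's inequality applied to $\Delta u - \Delta w \in H^1_0(\Omega; \R^n)$, whose symmetric gradient equals $\Delta e + \Delta p - \EE \Delta w$.

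\textbf{Main inequality via cross-testing.} Apply the stability (qs1) at $t_1$ with the admissible test $(\alpha(t), u(t) - \Delta w, e(t) - \EE \Delta w, p(t)) \in \mathcal D(\alpha(t_1)) \times \mathcal A(w(t_1))$ and the inequality (c) of Lemma~\ref{lem:qs_var_ineq} at $t$ with the mirror test $(u(t_1) + \Delta w, e(t_1) + \EE \Delta w, p(t_1)) \in \mathcal A(w(t))$. Summing and simplifying via the parallelogram identity $\mathcal Q(e(t_1) + \EE \Delta w) + \mathcal Q(e(t) - \EE \Delta w) = \mathcal Q(e(t_1)) + \mathcal Q(e(t)) - (\C \Delta e, \EE \Delta w)_2 + 2 \mathcal Q(\EE \Delta w)$ produces on the LHS the coercive quantities $\mathcal Q(\Delta e) + \|\nabla \Delta p\|_2^2 + \tilde Q(\alpha(t), \Delta p)$ together with $\|\nabla \alpha(t_1)\|_2^2 - \|\nabla \alpha(t)\|_2^2$, and on the RHS the work term $\mathcal Q(\EE \Delta w) + (\C \Delta e, \EE \Delta w)_2$, the dissipation $\mathcal H(\Delta p) + \mathcal H(-\Delta p)$, and the $\alpha$-energy increments $D(\alpha(t)) - D(\alpha(t_1))$ and $\tilde Q(\alpha(t), p(t_1)) - \tilde Q(\alpha(t_1), p(t_1))$. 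The $C^{1,1}$ regularity~\eqref{eq:d}--\eqref{eq:B1}, the pointwise bound $|\Delta \alpha| \le 1$ (so $\|\Delta \alpha\|_q^q \le \|\Delta \alpha\|_1$ for every $q \ge 1$), the Sobolev embedding $H^1(\Omega) \hookrightarrow L^4(\Omega)$ applied to $p(t_1)$ (uniformly bounded by Remark~\ref{rem:unif_bound}), together with Remark~\ref{rem:emb} and Young's inequality, control these residues and the $\B$-cross term $\int_\Omega (\B(\alpha(t))-\B(\alpha(t_1)))(p(t)+p(t_1)):\Delta p\,\de x$ linearly by $\|\Delta \alpha\|_1$ up to absorbable factors of $\|\Delta p\|_{H^1}$.

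\textbf{Dissipation cancellation and $\alpha$-closure.} The main technical obstacle is absorbing $\mathcal H(\Delta p) + \mathcal H(-\Delta p)$, which for the possibly unbounded $K$ of~\eqref{eq:K} is not controlled by $\|\Delta p\|_1$ alone. Apply inequality (b) of Lemma~\ref{lem:qs_var_ineq} at $t_1$ and at $t$ with the admissible tests $(\pm(\Delta u - \Delta w), \pm(\Delta e - \EE \Delta w), \pm \Delta p) \in \mathcal A(0)$ in the upper-bound form $A \le \mathcal H(-q)$; summing and simplifying yields
\[
\mathcal H(\Delta p) + \mathcal H(-\Delta p) \ge 2 \mathcal Q(\Delta e) + 2 \|\nabla \Delta p\|_2^2 + 2(\B(\alpha(t)) p(t) - \B(\alpha(t_1)) p(t_1), \Delta p)_2 - (\C \Delta e, \EE \Delta w)_2,
\]
and pairing this against the coercive LHS of the main inequality (the strain gradient $\|\nabla p\|_2^2$-term being crucial for the match) enables the dissipation to be absorbed. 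For $\|\nabla \Delta \alpha\|_2^2$, additionally test (qs1) at $t_1$ with $(\alpha(t), u(t_1), e(t_1), p(t_1))$ (no change in $p$, hence no dissipation), bounding $\|\nabla \alpha(t_1)\|_2^2 - \|\nabla \alpha(t)\|_2^2$ by residues of the same type, and combine with the variational inequality (a) of Lemma~\ref{lem:qs_var_ineq} at $\beta = \Delta \alpha \le 0$ bounding $-2(\nabla \alpha(t_1), \nabla \Delta \alpha)_2$; the polarisation $\|\nabla \Delta \alpha\|_2^2 = [\|\nabla \alpha(t)\|_2^2 - \|\nabla \alpha(t_1)\|_2^2] - 2(\nabla \alpha(t_1), \nabla \Delta \alpha)_2$ then closes the $\|\Delta \alpha\|_{H^1}$ estimate (using also $\|\Delta\alpha\|_2^2 \le \|\Delta \alpha\|_1$). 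Finally, Young's inequality $(\C \Delta e, \EE \Delta w)_2 \le \varepsilon \|\Delta e\|_2^2 + C_\varepsilon \|\EE \Delta w\|_2^2$ together with $\|\EE \Delta w(t)\|_2 \le \int_{t_1}^t \|\EE \dot w\|_2 \,\de r$ and a Fubini-type rewriting of $\|\Delta e(t)\|_2 \cdot \int_{t_1}^t \|\EE \dot w\|_2 \,\de r$ as $\int_{t_1}^t f(r) \|\EE \dot w(r)\|_2 \,\de r$ (up to boundary contributions absorbed into $g(t)^2$) produces the target Gronwall form, and Lemma~\ref{lem:gronw} concludes.
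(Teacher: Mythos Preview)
Your outline has a real gap in the ``Dissipation cancellation'' step. After your cross-testing, the dissipation term $\mathcal H(\Delta p)+\mathcal H(-\Delta p)$ sits on the \emph{right-hand side} of the main inequality, so to absorb it you would need an \emph{upper} bound. The inequality you derive from Lemma~\ref{lem:qs_var_ineq}(b), namely
\[
\mathcal H(\Delta p)+\mathcal H(-\Delta p)\;\ge\; 2\mathcal Q(\Delta e)+2\|\nabla\Delta p\|_2^2+2\bigl(\B(\alpha(t))p(t)-\B(\alpha(t_1))p(t_1),\Delta p\bigr)_2-(\C\Delta e,\EE\Delta w)_2,
\]
is a \emph{lower} bound and therefore cannot be substituted. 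If you try to subtract it from the main inequality, the coercive quantity $\mathcal Q(\Delta e)+\|\nabla\Delta p\|_2^2+\tilde Q(\alpha(t),\Delta p)$ appears with the wrong sign. Moreover, for the unbounded constraint sets allowed by~\eqref{eq:K} (Drucker--Prager), $\mathcal H(-\Delta p)$ may be $+\infty$, so the (c)-inequality at time $t$ with competitor $p(t_1)$ can be vacuous to begin with.

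The missing ingredient is the energy-dissipation balance (qs2), which you never invoke. The paper proceeds by testing (c) only at $t_1$ (producing $\mathcal H(\Delta p)$ alone on the right) and then uses (qs2) between $t_1$ and $t_2$ in the form
\[
\mathcal H(\Delta p)\le\mathcal V_{\mathcal H}(p;t_1,t_2)=\mathcal E(\alpha(t_1),e(t_1),p(t_1))-\mathcal E(\alpha(t_2),e(t_2),p(t_2))+\int_{t_1}^{t_2}(\C e(s),\EE\dot w(s))_2\,\de s
\]
to replace the dissipation by an energy drop plus work. Summing this with the (a)- and (c)-inequalities at $t_1$, the full-energy terms cancel and one is left with second-order residues in $\Delta\alpha$ and the running integral $\int_{t_1}^{t_2}(\C(e(s)-e(t_1)),\EE\dot w(s))_2\,\de s$, which is precisely the $\int f\,h$ structure required by Lemma~\ref{lem:gronw}. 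Your endpoint work term $(\C\Delta e,\EE\Delta w)_2$ and the proposed ``Fubini-type rewriting'' would not be needed once (qs2) is used; the running integrand comes for free.
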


\begin{remark}
Therefore, every global quasistatic evolution $(\alpha,u,e,p)$ is (strongly) continuous from $[0,T]$ into $H^1(\Omega)\times\mathcal A$, except for a countable subset of $[0,T]$, which is the set of discontinuity points of $\alpha$ with respect to the convergence in $L^1(\Omega)$. Moreover, if $\alpha\in AC([0,T];L^1(\Omega))$, then $(\alpha,u,e,p)$ are absolutely continuous from $[0,T]$ into $H^1(\Omega)\times \mathcal A$.
\end{remark}

\begin{proof}
We fix $t_1,t_2\in[0,T]$ with $t_1<t_2$. We test~\eqref{eq:st_1} in $t_1$ with $\beta\coloneqq\alpha(t_2)-\alpha(t_1)$ and we use the identity
$$\|\nabla \alpha(t_2)\|_2^2-\|\nabla \alpha(t_2)-\nabla\alpha(t_1)\|_2^2-\|\nabla \alpha(t_1)\|_2^2=2(\nabla\alpha(t_1),\nabla\alpha(t_2)-\nabla\alpha(t_1))_2,$$
to derive that
\begin{align}
&\|\nabla \alpha(t_2)-\nabla\alpha(t_1)\|_2^2\nonumber\\
&\le \|\nabla \alpha(t_2)\|_2^2-\|\nabla \alpha(t_1)\|_2^2+(\dot{d}(\alpha(t_1)),\alpha(t_2)-\alpha(t_1))_2+(\dot{\B}(\alpha(t_1))(\alpha(t_2)-\alpha(t_1))p(t_1),p(t_1))_2.\label{eq:lowbou1}  
\end{align}
We now test~\eqref{eq:st_3} in $t_1$ with
\begin{equation*}
(v,\eta,q)\coloneqq(u(t_2)-w(t_2)+w(t_1),e(t_2)-\EE w(t_2)+\EE w(t_1),p(t_2))\in \mathcal A(w(t_1)),
\end{equation*}
and we get
\begin{align*}
&\mathcal Q(e(t_1))+\mathcal Q(e(t_2)-e(t_1)-\EE(w(t_2)-w(t_1)))\\
&\quad+\tilde Q(\alpha(t_1),p(t_1))+\tilde Q(\alpha(t_1),p(t_2)-p(t_1))+\|\nabla p(t_1)\|_2^2+\|\nabla p(t_2)-\nabla p(t_1)\|_2^2\\
&\le \mathcal Q(e(t_2)-\EE(w(t_2)-w(t_1)))+\tilde Q(\alpha(t_1),p(t_2))+\|\nabla p(t_2)\|_2^2+\mathcal H(p(t_2)-p(t_1)).
\end{align*}
In particular
\begin{align}
&\mathcal Q(e(t_2)-e(t_1))+\tilde Q(\alpha(t_1),p(t_2)-p(t_1))+\|\nabla p(t_2)-\nabla p(t_1)\|_2^2\nonumber\\
&\le \mathcal Q(e(t_2))-\mathcal Q(e(t_1))-(\C e(t_1),\EE(w(t_2)-w(t_1)))_2\nonumber\\
&\quad+\tilde Q(\alpha(t_1),p(t_2))-\tilde Q(\alpha(t_1),p(t_1))+\|\nabla p(t_2)\|_2^2-\|\nabla p(t_1)\|_2^2+\mathcal H(p(t_2)-p(t_1)).\label{eq:lowbou2}
\end{align}
Moreover, by (qs2) evaluated in $t_1$ and $t_2$, we have that 
\begin{align}
\mathcal H(p(t_2)-p(t_1))&\le \mathcal V_{\mathcal H}(p;t_1,t_2)\nonumber\\
&= \mathcal E(\alpha(t_1),e(t_1),p(t_1))-\mathcal E(\alpha(t_2),e(t_2),p(t_2))+\int_{t_1}^{t_2}(\C e(s),\EE \dot w(s))_2\,\de s.\label{eq:lowbou3}   
\end{align}
If we sum~\eqref{eq:lowbou1} and~\eqref{eq:lowbou2} and we use~\eqref{eq:lowbou3} we deduce that
\begin{align*}
&\mathcal Q(e(t_2)-e(t_1))+\tilde Q(\alpha(t_1),p(t_2)-p(t_1))+\|\nabla p(t_2)-\nabla p(t_1)\|_2^2+  \|\nabla \alpha(t_2)-\nabla \alpha(t_1)\|_2^2\\
&\le \int_{t_1}^{t_2}(\C (e(s)-e(t_1)),\EE \dot w(s))_2\,\de s+D(\alpha(t_1))-D(\alpha(t_2))+(\dot{d}(\alpha(t_1)),\alpha(t_2)-\alpha(t_1))_2\\
&\quad+\tilde Q(\alpha(t_1),p(t_2))-\tilde Q(\alpha(t_2),p(t_2))+(\dot{\B}(\alpha(t_1))(\alpha(t_2)-\alpha(t_1))p(t_1),p(t_1))_2.
\end{align*}
By~\eqref{eq:d} and~\eqref{eq:B1}, for all $\alpha_1,\alpha\in [0,1]$ and $\xi\in\M$ we have
\begin{align*}
|d(\alpha_1)-d(\alpha_2)-\dot{d}(\alpha_1)(\alpha_1-\alpha_2)| &\le \|\ddot{d}\|_\infty|\alpha_1-\alpha_2|^2,\\
|\B(\alpha_1)\xi:\xi-\B(\alpha_2)\xi:\xi-\dot{\B}(\alpha_1)(\alpha_1-\alpha_2)\xi:\xi| &\le \|\ddot{\B}\|_\infty|\alpha_1-\alpha_2|^2|\xi|^2.
\end{align*}
Therefore, thnkas to~\eqref{eq:C2} and the uniform estimates~\eqref{eq:unif_bound} (see also Remark~\ref{rem:unif_bound}), we have
\begin{align}
&\frac{\gamma_2}{2}\|e(t_2)-e(t_1)\|_2^2+\tilde Q(\alpha(t_1),p(t_2)-p(t_1))+\|\nabla p(t_2)-\nabla p(t_1)\|_2^2+  \|\nabla \alpha(t_2)-\nabla \alpha(t_1)\|_2^2\nonumber\\
&\le \int_{t_1}^{t_2}(\C (e(s)-e(t_1)),\EE \dot w(s))_2\,\de s+\|\ddot{d}\|_\infty\|\alpha(t_2)-\alpha(t_1)\|_2^2+\|\ddot{\B}\|_\infty\|\alpha(t_2)-\alpha(t_1)\|_4^2\|p(t_1)\|_4^2\nonumber\\
&\quad+\tilde Q(\alpha(t_1),p(t_2))-\tilde Q(\alpha(t_2),p(t_2))-\tilde Q(\alpha(t_1),p(t_1))+\tilde Q(\alpha(t_2),p(t_1)),\nonumber\\
&=\int_{t_1}^{t_2}(\C (e(s)-e(t_1)),\EE \dot w(s))_2\,\de s+\|\ddot{d}\|_\infty\|\alpha(t_2)-\alpha(t_1)\|_2^2+\|\ddot{\B}\|_\infty\|\alpha(t_2)-\alpha(t_1)\|_4^2\|p(t_1)\|_4^2\nonumber\\
&\quad+((\B(\alpha(t_1))-\B(\alpha(t_2))(p(t_2)-p(t_1)),p(t_2)+p(t_1))_2\nonumber\\
&\le C_1\|\alpha(t_2)-\alpha(t_1)\|_4^2+C_1\|\alpha(t_1)-\alpha(t_2)\|_4\|p(t_2)-p(t_1)\|_2+C_1\int_{t_1}^{t_2}\|e(s)-e(t_1)\|_2\|\EE \dot w(s)\|_2\,\de s\label{eq:part1}
\end{align}
for a constant $C_1>0$.

By Remark~\ref{rem:BCS}, the uniform estimates~\eqref{eq:unif_bound}, and~\eqref{eq:rH},~\eqref{eq:lowbou3}, there exists a constant $C_2>0$ such that
\begin{align*}
r_H\|p(t_2)-p(t_1)\|_1&\le \mathcal H(p(t_2)-p(t_1))\\
&\le \mathcal E(\alpha(t_1),e(t_1),p(t_1))-\mathcal E(\alpha(t_2),e(t_2),p(t_2))+\int_{t_1}^{t_2}(\C e(s),\EE \dot w(s))_2\,\de s\\
&\le \mathcal Q(e(t_1))-\mathcal Q(e(t_2))+D(\alpha(t_1))-D(\alpha(t_2))+\|\nabla \alpha(t_1)\|_2^2-\|\nabla \alpha(t_2)\|_2^2\\
&\quad+\tilde Q(\alpha(t_1),p(t_1))-\tilde Q(\alpha(t_1),p(t_2))+\tilde Q(\alpha(t_1),p(t_2))-\tilde Q(\alpha(t_2),p(t_2))\\
&\quad+\|\nabla p(t_1)\|_2^2-\|\nabla p(t_2)\|_2^2+\int_{t_1}^{t_2}(\C e(s),\EE \dot w(s))_2\,\de s\\
&\le C_2\left(\|e(t_2)-e(t_1)\|_2+\|\alpha(t_2)-\alpha(t_1)\|_4+\|\nabla\alpha(t_2)-\nabla\alpha(t_1)\|_2\right)\\
&\quad+C_2\left(\sqrt{\tilde Q(\alpha(t_1),p(t_2)-p(t_1))}+\|\nabla p(t_2)-\nabla p(t_1)\|_2+\int_{t_1}^{t_2}\|\EE \dot w(s)\|_2\,\de s\right).
\end{align*}
Therefore, if we use also~\eqref{eq:part1} we deduce the existence of a constant $C_3>0$ such that
\begin{align}
&r_H^2\|p(t_2)-p(t_1)\|_1^2\nonumber\\
&\le 3C_2^2\left(\|e(t_2)-e(t_1)\|_2^2+\|\alpha(t_2)-\alpha(t_1)\|_4^2+\|\nabla\alpha(t_2)-\nabla\alpha(t_1)\|_2^2\right)\nonumber\\
&\quad+3C_2^2\left(\tilde Q(\alpha(t_1),p(t_2)-p(t_1))+\|\nabla p(t_2)-\nabla p(t_1)\|_2^2+\left(\int_{t_1}^{t_2}\|\EE \dot w(s)\|_2\,\de s\right)^2\right)\nonumber\\
&\le C_3\|\alpha(t_2)-\alpha(t_1)\|_4^2+C_3\|\alpha(t_1)-\alpha(t_2)\|_4\|p(t_2)-p(t_1)\|_2\nonumber\\
&\quad+C_3\int_{t_1}^{t_2}\|e(s)-e(t_1)\|_2\|\EE \dot w(s)\|_2\,\de s+C_3\left(\int_{t_1}^{t_2}\|\EE \dot w(s)\|_2\,\de s\right)^2.\label{eq:part2}
\end{align}
By summing~\eqref{eq:part1} and~\eqref{eq:part2} and using also Corollary~\ref{coro:H1norm}, we can find a constant $C_4>0$ such that
\begin{align*}
&\|e(t_2)-e(t_1)\|_2^2+\|p(t_2)-p(t_1)\|_{H^1}^2+\|\alpha(t_2)-\alpha(t_1)\|_{H^1}^2\\
&\le C_4\|\alpha(t_2)-\alpha(t_1)\|_4^2+C_4\|\alpha(t_1)-\alpha(t_2)\|_4\|p(t_2)-p(t_1)\|_2\\
&\quad+C_4\int_{t_1}^{t_2}\|e(s)-e(t_1)\|_2\|\EE \dot w(s)\|_2\,\de s+C_4\left(\int_{t_1}^{t_2}\|\EE \dot w(s)\|_2\,\de s\right)^2.
\end{align*}
Firstly, we use Young Inequality
$$\|\alpha(t_1)-\alpha(t_2)\|_4\|p(t_2)-p(t_1)\|_2\le C_\epsilon\|\alpha(t_1)-\alpha(t_2)\|_4^2+\epsilon\|p(t_2)-p(t_1)\|_{H^1}^2\quad\text{for all $\epsilon>0$},$$
and then Lemma~\ref{lem:Cdelta} with $\theta=4$ to derive that
\begin{align*}
&\|e(t_2)-e(t_1)\|_2^2+\|p(t_2)-p(t_1)\|_{H^1}^2+\|\alpha(t_2)-\alpha(t_1)\|_{H^1}^2\\
&\le C_5\|\alpha(t_2)-\alpha(t_1)\|_1^2+C_5\int_{t_1}^{t_2}\|e(s)-e(t_1)\|_2\|\EE \dot w(s)\|_2\,\de s+C_5\left(\int_{t_1}^{t_2}\|\EE \dot w(s)\|_2\,\de s\right)^2
\end{align*}
for a constant $C_5>0$. In particular, there exist a constant $C>0$ such that
\begin{align}
&\left(\|e(t_2)-e(t_1)\|_2+\|p(t_2)-p(t_1)\|_{H^1}+\|\alpha(t_2)-\alpha(t_1)\|_{H^1}\right)^2\nonumber\\
&\le C^2\|\alpha(t_2)-\alpha(t_1)\|_1^2+C\int_{t_1}^{t_2}\|e(s)-e(t_1)\|_2\|\EE \dot w(s)\|_2\,\de s+\left(C\int_{t_1}^{t_2}\|\EE \dot w(s)\|_2\,\de s\right)^2.\label{eq:continuity}
\end{align}
Let us define the functions
\begin{align*}
&f(t)\coloneqq\|e(t)-e(t_1)\|_2+\|p(t)-p(t_1)\|_{H^1}+\|\alpha(t)-\alpha(t_1)\|_{H^1}\quad\text{for all $t\in[t_1,T]$},\\
&g(t)\coloneqq C\|\alpha(t)-\alpha(t_1)\|_1\quad\text{for all $t\in[t_1,T]$},\qquad h(t)\coloneqq C\|\EE \dot w(t)\|_2\quad\text{for a.e.\ $t\in[t_1,T]$}.
\end{align*}
Therefore, by~\eqref{eq:continuity} we deduce
\begin{align*}
f(t)^2\le g(t)^2+\int_{t_1}^tf(s)h(s)\,\de s+\left(\int_{t_1}^th(s)\,\de s\right)\quad\text{for all $t\in[t_1,T]$}.
\end{align*}
We can apply Lemma~\ref{lem:gronw} to deduce~\eqref{eq:cont1}. Finally, by Korn Inequality we have
$$\|u(t_2)-u(t_1)\|_2\le C_6\|e(t_2)-e(t_1)\|_2+C_6\|p(t_2)-p(t_1)\|_2+C_6\|w(t_2)-w(t_1)\|_{H^1}$$
for a constant $C_6>0$. Hence, by~\eqref{eq:cont1} we derive~\eqref{eq:cont2}.

\end{proof}


\section{Approximate viscous evolutions}\label{sec:ap_ve}

The present section is devoted to prove the existence of a quasistatic evolution for a viscous regularization of our elastoplastic-damage model, driven by a small parameter $\varepsilon\in(0,1)$ (see Definition~\ref{def:ep_QSE}). As in the previous section, this is done by using a time discretization scheme. 

\subsection{Discretization in time}

Let $\varepsilon\in(0,1)$ be fixed. As in Section~\ref{sec:gl_qe} we consider an initial configuration $(\alpha^0,u^0,e^0,p^0)\in H^1(\Omega;[0,1])\times\mathcal A(w(0))$ which satisfies~\eqref{eq:ic}.

For all $k\in\N$ we define
\begin{equation*}
\tau_k\coloneqq \frac{T}{k},\quad t_k^i\coloneqq i\tau_k,\quad w_k^i\coloneqq w(t_k^i)\quad\text{for $i=0,\dots,k$}.
\end{equation*}
Starting from
\begin{equation*}
(\alpha_k^0,u_k^0,e_k^0,p_k^0)\coloneqq (\alpha^0,u^0,e^0,p^0)\in H^1(\Omega;[0,1])\times \mathcal A(w(0)),
\end{equation*}
for all $i=1,\dots,k$ we define
\begin{equation*}
(\alpha_k^i,u_k^i,e_k^i,p_k^i)\in \mathcal D(\alpha_k^{i-1})\times \mathcal A(w_k^i)
\end{equation*}
as the solution of the minimum problem
\begin{equation}\label{eq:min_ki}
\min_{(\beta,v,\eta,q)\in \mathcal D(\alpha_k^{i-1})\times \mathcal A(w_k^i)}\left\{\mathcal E(\beta,\eta,q)+\mathcal H(q-p_k^{i-1})+\frac{\varepsilon}{2\tau_k}\|\beta-\alpha_k^{i-1}\|_2^2\right\}.
\end{equation}
As in Section~\ref{sec:gl_qe}, the minimum problem~\eqref{eq:min_ki} admits a solution $(\alpha_k^i,u_k^i,e_k^i,p_k^i)\in\mathcal D(\alpha_k^{i-1})\times \mathcal A(w_k^i)$ for all $i=1,\dots,k$. Moreover, it satisfies the following properties.

\begin{lemma}\label{lem:ki_stab}
Assume~\eqref{eq:d}--\eqref{eq:B3},~\eqref{eq:K}, and~\eqref{eq:ic}. The following hold for all $k\in\N$ and $i=0\dots,k$.
\begin{itemize}
\item[(a)] For all $(v,\eta,q)\in \mathcal A(w_k^i)$
\begin{align*}
\mathcal E(\alpha_k^i,e_k^i,p_k^i)\le \mathcal E(\alpha_k^i,\eta,q)+\mathcal H(q-p_k^i).
\end{align*}
\item[(b)] For all $(v,\eta,q)\in \mathcal A(0)$
\begin{equation*}
-\mathcal H(q)\le (\C e_k^i,\eta)_2+2(\B(\alpha_k^i)p_k^i,q)_2+2(\nabla p_k^i,\nabla q)_2\le \mathcal H(-q).
\end{equation*}
In particular
\begin{align}\label{eq:divCeki}
\div\C e_k^i=0\quad\text{in $\mathcal D'(\Omega;\R^n)$}.
\end{align}
\item[(c)] For all $(v,\eta,q)\in \mathcal A(w_k^i)$
\begin{align*}
&\mathcal Q(e_k^i)+\mathcal Q(\eta-e_k^i)+\tilde Q(\alpha_k^i,p_k^i)+\tilde Q(\alpha_k^i,q-p_k^i)+\|\nabla p_k^i\|_2^2+\|\nabla q-\nabla p_k^i\|_2^2\\
&\le \mathcal Q(\eta)+\tilde Q(\alpha_k^i,q)+\|\nabla q\|_2^2+\mathcal H(q-p_k^i).
\end{align*}
\end{itemize}
\end{lemma}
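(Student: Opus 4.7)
The plan is to exploit the fact that the viscous regularization term in~\eqref{eq:min_ki} depends only on the damage variable $\beta$, and therefore disappears whenever we test the minimality with competitors that keep the damage fixed at $\alpha_k^i$. Once this observation is made, the three assertions become essentially the discrete-in-time analogue of Lemma~\ref{lem:qs_var_ineq}, and can be proved by the same arguments.

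\medskip

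\textbf{Part (a).} I would fix $(v,\eta,q)\in\mathcal A(w_k^i)$ and use $(\alpha_k^i,v,\eta,q)\in \mathcal D(\alpha_k^{i-1})\times \mathcal A(w_k^i)$ as a competitor in~\eqref{eq:min_ki}. Since the viscous term $\tfrac{\varepsilon}{2\tau_k}\|\beta-\alpha_k^{i-1}\|_2^2$ takes the same value on both sides, the minimality of $(\alpha_k^i,u_k^i,e_k^i,p_k^i)$ gives
\begin{equation*}
\mathcal E(\alpha_k^i,e_k^i,p_k^i)+\mathcal H(p_k^i-p_k^{i-1})\le \mathcal E(\alpha_k^i,\eta,q)+\mathcal H(q-p_k^{i-1}).
\end{equation*}
Using the subadditivity $\mathcal H(q-p_k^{i-1})\le \mathcal H(q-p_k^i)+\mathcal H(p_k^i-p_k^{i-1})$ (which follows from the positive $1$-homogeneity and convexity of $H$) then yields the claim.

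\medskip

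\textbf{Part (b).} For $(v,\eta,q)\in\mathcal A(0)$ and $\lambda>0$, the quadruple $(\alpha_k^i,u_k^i+\lambda v,e_k^i+\lambda \eta,p_k^i+\lambda q)$ belongs to $\mathcal D(\alpha_k^{i-1})\times\mathcal A(w_k^i)$. Inserting it into~\eqref{eq:min_ki} and again cancelling the viscous term gives, after using $\mathcal H(p_k^i+\lambda q-p_k^{i-1})\le \mathcal H(p_k^i-p_k^{i-1})+\lambda\mathcal H(q)$,
\begin{equation*}
\mathcal E(\alpha_k^i,e_k^i,p_k^i)\le \mathcal E(\alpha_k^i,e_k^i+\lambda\eta,p_k^i+\lambda q)+\lambda\mathcal H(q).
\end{equation*}
Expanding the quadratic terms, dividing by $\lambda$, and letting $\lambda\to 0^+$ produces the upper bound
\begin{equation*}
(\C e_k^i,\eta)_2+2(\B(\alpha_k^i)p_k^i,q)_2+2(\nabla p_k^i,\nabla q)_2\le \mathcal H(q),
\end{equation*}
and replacing $(v,\eta,q)$ by $(-v,-\eta,-q)$ (still admissible) yields the matching lower bound with $-\mathcal H(-q)$; note the roles of $q$ and $-q$ exchange, giving precisely the two-sided bound in the statement. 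The identity $\div\C e_k^i=0$ in $\mathcal D'(\Omega;\R^n)$ then follows by choosing $(\phi,\EE\phi,0)\in\mathcal A(0)$ for $\phi\in C_c^\infty(\Omega;\R^n)$, for which $\mathcal H(0)=0$.

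\medskip

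\textbf{Part (c).} This is a rewriting of (a) once we expand the quadratic forms around $(e_k^i,p_k^i)$. Using the polarization identities
\begin{equation*}
\mathcal Q(\eta)=\mathcal Q(e_k^i)+\mathcal Q(\eta-e_k^i)+(\C e_k^i,\eta-e_k^i)_2,
\end{equation*}
\begin{equation*}
\tilde Q(\alpha_k^i,q)=\tilde Q(\alpha_k^i,p_k^i)+\tilde Q(\alpha_k^i,q-p_k^i)+2(\B(\alpha_k^i)p_k^i,q-p_k^i)_2,
\end{equation*}
\begin{equation*}
\|\nabla q\|_2^2=\|\nabla p_k^i\|_2^2+\|\nabla q-\nabla p_k^i\|_2^2+2(\nabla p_k^i,\nabla q-\nabla p_k^i)_2,
\end{equation*}
and noting that $(v-u_k^i,\eta-e_k^i,q-p_k^i)\in\mathcal A(0)$ so that the cross terms can be bounded below by $-\mathcal H(q-p_k^i)$ via (b), the desired inequality follows directly.

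\medskip

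There is no real obstacle here: the entire argument parallels Lemma~\ref{lem:qs_var_ineq}, with the sole novelty being the cancellation of the viscous penalty when the damage variable is kept fixed. The only point requiring a little care is making sure the perturbations $\alpha_k^i+\lambda\beta$ of the damage would create work for parts analogous to Lemma~\ref{lem:qs_var_ineq}(a)---which is not needed here, since (b) and (c) only perturb the displacement, elastic strain, and plastic strain.
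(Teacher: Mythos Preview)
Your approach is essentially the same as the paper's, and the key observation---that the viscous term cancels when the damage competitor is kept at $\alpha_k^i$---is exactly what drives the argument. A few small corrections: in part~(a) you only treat $i\ge 1$, where the minimum problem~\eqref{eq:min_ki} is available; for $i=0$ the claim must instead be read off directly from the initial stability assumption~\eqref{eq:ic} (with $\beta=\alpha^0$), as the paper does. In part~(b), the perturbation $(u_k^i+\lambda v,e_k^i+\lambda\eta,p_k^i+\lambda q)$ actually yields the \emph{lower} bound $-\mathcal H(q)\le(\C e_k^i,\eta)_2+2(\B(\alpha_k^i)p_k^i,q)_2+2(\nabla p_k^i,\nabla q)_2$, not $[\cdots]\le\mathcal H(q)$; the upper bound $[\cdots]\le\mathcal H(-q)$ then comes from the sign-reversed competitor. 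In part~(c), the inequality is in fact a direct rewriting of (b) (not of (a)) via the polarization identities you wrote down---your use of (b) on the cross terms is precisely what the paper does, so only the opening sentence is misleading.
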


\begin{proof}
(a) For $i=0$ this is true by~\eqref{eq:ic}, while for $i=1,\dots,k$ and $(v,\eta,q)\in \mathcal A(w_k^i)$ it is enough to use~\eqref{eq:min_ki} together with
\begin{equation*}
\mathcal H(q-p_k^{i-1})-\mathcal H(p_k^i-p_k^{i-1})\le \mathcal H(q-p_k^i).
\end{equation*}

(b) For all $(v,\eta,q)\in \mathcal A(0)$ and $\lambda>0$ we text in (a) with  $(u_k^i+\lambda v,e_k^i+\lambda \eta,p_k^i+\lambda q)\in \mathcal A(w_k^i)$, and we get
\begin{align*}
\mathcal Q(e_k^i)+\tilde Q(\alpha_k^i,p_k^i)+\|\nabla p_k^i\|_2^2\le \mathcal Q(e_k^i-\lambda \eta)+\tilde Q(\alpha_k^i,p_k^i-\lambda q)+\|\nabla (p_k^i-\lambda q)\|_2^2+\lambda\mathcal H(-q).
\end{align*}
By dividing for $\lambda>0$ and sending $\lambda\to 0^+$ we get one inequality of (b). To get the other inequality, it is enough to replace $(v,\eta,q)\in\mathcal A(0)$ with $(-v,-\eta,-q)\in\mathcal A(0)$. 

Let now $\phi\in C_c^\infty(\Omega)$. Then $(\phi,\EE\phi,0)\in \mathcal A(0)$ and by using (b) we get
$$(\C e_k^i,\EE\phi)_2=0\quad\text{for all $\phi\in C_c^\infty(\Omega;\R^n)$},$$
which gives~\eqref{eq:divCeki}.

(c) Thanks to (b), for all $(v,\eta,q)\in \mathcal A(w_k^i)$ we get
\begin{align*}
&\mathcal Q(e_k^i)+\mathcal Q(\eta-e_k^i)-\mathcal Q(\eta)+\tilde Q(\alpha_k^i,p_k^i)+\tilde Q(\alpha_k^i,q-p_k^i)-\tilde Q(\alpha_k^i,q)\\
&\quad+\|\nabla p_k^i\|_2^2+\|\nabla q-\nabla p_k^i\|_2^2-\|\nabla q\|_2^2\\
&=(\C e_k^i,e_k^i-\eta)_2+2(\B(\alpha_k^i)p_k^i,p_k^i-q)_2+2(\nabla p_k^i,\nabla p_k^i-\nabla q)_2\le \mathcal H(q-p_k^i),
\end{align*}
which implies the desired inequality.
\end{proof}

For all $k\in\N$ and $i=1,\dots,k$ we define
\begin{equation*}\dot\alpha_k^i\coloneqq \frac{\alpha_k^i-\alpha_k^{i-1}}{\tau_k},\quad \dot u_k^i\coloneqq \frac{u_k^i-u_k^{i-1}}{\tau_k},\quad \dot e_k^i\coloneqq \frac{e_k^i-e_k^{i-1}}{\tau_k},\quad \dot p_k^i\coloneqq \frac{ p_k^i-p_k^{i-1}}{\tau_k},\quad \dot w_k^i\coloneqq \frac{w_k^i-w_k^{i-1}}{\tau_k}.\end{equation*}
By arguing as in Lemma~\ref{lem:glslkey}, we can derive the following discrete energy estimates.

\begin{lemma}
Assume~\eqref{eq:d}--\eqref{eq:B3},~\eqref{eq:K}, and~\eqref{eq:ic}. For all $k\in \N$ and $i=1,\dots,k$ we have
\begin{equation*}
\mathcal E(\alpha_k^i,e_k^i,p_k^i)+\sum_{j=1}^i\tau_k\mathcal H(\dot p_k^j)+\frac{\varepsilon}{2}\sum_{j=1}^i\tau_k\|\dot\alpha_k^j\|_2^2\le \mathcal E(\alpha^0,e^0,p^0)+\sum_{j=1}^i\tau_k(\C e_k^{j-1},\EE \dot w_k^j)_2+\delta_k,
\end{equation*}
where $\delta_k\to 0$ as $k\to\infty$ is defined as in~\eqref{eq:deltak}. In particular, there exists a constant $C>0$ independent on $\varepsilon,k,i$ such that
\begin{align}\label{eq:unif-est-eps}
\max_{i=0,\dots,k}\|e_k^i\|_2+\max_{i=0,\dots,k}\|p_k^i\|_{H^1}+\max_{i=0,\dots,k}\|\alpha_k^i\|_{H^1}+\sum_{i=1}^k\tau_k\mathcal H(\dot p_k^i)+\varepsilon \sum_{i=1}^k\tau_k\|\dot \alpha_k^i\|_2^2\le C.
\end{align}
\end{lemma}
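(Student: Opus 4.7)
The plan is to mimic the argument of Lemma~\ref{lem:glslkey}, exploiting the fact that the added viscous penalty $\frac{\varepsilon}{2\tau_k}\|\beta-\alpha_k^{i-1}\|_2^2$ vanishes at the natural competitor. More precisely, for each $j\in\{1,\dots,i\}$ I would test the minimality condition~\eqref{eq:min_ki} against
\begin{equation*}
(\alpha_k^{j-1},\,u_k^{j-1}-w_k^{j-1}+w_k^j,\,e_k^{j-1}-\EE w_k^{j-1}+\EE w_k^j,\,p_k^{j-1})\in \mathcal D(\alpha_k^{j-1})\times \mathcal A(w_k^j),
\end{equation*}
which is admissible because $\alpha_k^{j-1}\in \mathcal D(\alpha_k^{j-1})$ trivially. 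Since the damage variable of this competitor equals $\alpha_k^{j-1}$, the viscous term on the right-hand side is zero.

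Using the bilinear expansion
\begin{equation*}
\mathcal Q(e_k^{j-1}-\EE w_k^{j-1}+\EE w_k^j)=\mathcal Q(e_k^{j-1})+\tau_k(\C e_k^{j-1},\EE \dot w_k^j)_2+\mathcal Q(\EE w_k^j-\EE w_k^{j-1}),
\end{equation*}
the bound $\mathcal Q(\xi)\le\frac{\gamma_2}{2}\|\xi\|_2^2$ from~\eqref{eq:C2}, and the $1$-homogeneity of $\mathcal H$, the minimality inequality gives the single-step estimate
\begin{equation*}
\mathcal E(\alpha_k^j,e_k^j,p_k^j)+\tau_k\mathcal H(\dot p_k^j)+\frac{\varepsilon\tau_k}{2}\|\dot\alpha_k^j\|_2^2\le \mathcal E(\alpha_k^{j-1},e_k^{j-1},p_k^{j-1})+\tau_k(\C e_k^{j-1},\EE\dot w_k^j)_2+\frac{\gamma_2}{2}\|\EE w_k^j-\EE w_k^{j-1}\|_2^2.
\end{equation*}
Summing over $j=1,\dots,i$ produces a telescoping in the energy, and the residual term $\frac{\gamma_2}{2}\sum_{j=1}^i\|\EE w_k^j-\EE w_k^{j-1}\|_2^2$ is controlled by $\delta_k$ via Jensen's inequality applied to $\EE w_k^j-\EE w_k^{j-1}=\int_{t_k^{j-1}}^{t_k^j}\EE\dot w(r)\,\de r$, exactly as in~\eqref{eq:deltak}.

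For the uniform estimates in~\eqref{eq:unif-est-eps}, I would first absorb the work term: from $\mathcal E\ge \frac{\gamma_1}{2}\|e_k^i\|_2^2$ and $\sum_{j=1}^i\tau_k(\C e_k^{j-1},\EE\dot w_k^j)_2\le\gamma_2(\max_j\|e_k^j\|_2)\int_0^T\|\EE\dot w(r)\|_2\,\de r$, Young's inequality yields a bound on $\max_i\|e_k^i\|_2$ independent of $\varepsilon$ and $k$ (the viscous term drops because it is nonnegative, which is precisely why $\varepsilon$-independence is preserved). Plugging this back into the discrete energy inequality bounds $\max_i\|\nabla\alpha_k^i\|_2$, $\max_i\|\nabla p_k^i\|_2$, $\sum_i\tau_k\mathcal H(\dot p_k^i)$, and $\varepsilon\sum_i\tau_k\|\dot\alpha_k^i\|_2^2$. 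The $L^1$-bound on $p_k^i$ follows from the telescoping $\|p_k^i\|_1\le \|p^0\|_1+\frac{1}{r_H}\sum_{j=1}^i\tau_k\mathcal H(\dot p_k^j)$ via~\eqref{eq:rH}, and Corollary~\ref{coro:H1norm} upgrades this to an $H^1$-bound; the $H^1$-bound on $\alpha_k^i$ is immediate from $0\le\alpha_k^i\le 1$ and the gradient bound.

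I do not expect any real obstacle: the only new ingredient compared to Lemma~\ref{lem:glslkey} is the viscous term, which is handled for free by choosing the competitor with $\beta=\alpha_k^{j-1}$, and the uniformity in $\varepsilon$ follows because that term is nonnegative and appears only on the left-hand side of the energy inequality.
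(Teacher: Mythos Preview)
Your proposal is correct and follows essentially the same approach as the paper, which simply refers back to the proof of Lemma~\ref{lem:glslkey}. The only extra observation needed---that the viscous penalty vanishes for the competitor with $\beta=\alpha_k^{j-1}$ and otherwise sits harmlessly on the left-hand side---is exactly what you identify.
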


\begin{proof}
It is enough to proceed as in the proof of Lemma~\ref{lem:glslkey}.
\end{proof}

We now show a variational inequality for the quadruple $(\alpha_k^i,u_k^i,e_k^i,p_k^i)$, similarly to Lemma~\ref{lem:qs_var_ineq}-(a).

\begin{lemma}
Assume~\eqref{eq:d}--\eqref{eq:B3},~\eqref{eq:K}, and~\eqref{eq:ic}. For all $i=1\dots,k$ we have
\begin{equation}\label{eq:st_aki}
\partial_\alpha \mathcal E(\alpha_k^i,e_k^i,p_k^i)[\beta]+\varepsilon(\dot\alpha_k^i,\beta)_2\ge 0\quad\text{for all $\beta\in H^1(\Omega)$ with $\beta\le 0$}.
\end{equation}
Moreover
\begin{equation}\label{eq:st_daki}
\partial_\alpha \mathcal E(\alpha_k^i,e_k^i,p_k^i)[\dot\alpha_k^i]+\varepsilon\|\dot\alpha_k^i\|_2^2=0.
\end{equation}
\end{lemma}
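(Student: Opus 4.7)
The plan is to derive both statements from the minimality~\eqref{eq:min_ki} by testing with suitable perturbations of $\alpha_k^i$ in the $\alpha$-slot only (keeping $u_k^i,e_k^i,p_k^i$ fixed, so the admissibility condition $\EE u=e+p$ and the boundary datum $w_k^i$ are preserved for free). The variation of the quadratic penalty $\frac{\varepsilon}{2\tau_k}\|\cdot-\alpha_k^{i-1}\|_2^2$ will produce the extra $\varepsilon(\dot\alpha_k^i,\cdot)_2$-contribution on top of $\partial_\alpha\mathcal E$, using~\eqref{eq:par_der}.

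For~\eqref{eq:st_aki}, I would fix $\beta\in H^1(\Omega)$ with $\beta\le 0$ and, for $\lambda>0$ small, consider the perturbed damage $\alpha_k^i+\lambda\beta$. Admissibility in $\mathcal D(\alpha_k^{i-1})$ requires values in $[0,1]$ and $\le\alpha_k^{i-1}$; the latter is automatic since $\alpha_k^i\le\alpha_k^{i-1}$ and $\beta\le 0$, but the former can fail. I would follow verbatim the argument in the proof of Lemma~\ref{lem:qs_var_ineq}(a): extend $d$ and $\mathbb B$ to $(-\infty,0)$ as in~\eqref{eq:dB_ext}, test minimality of~\eqref{eq:min_ki} with $(\alpha_k^i+\lambda\beta)^+\in\mathcal D(\alpha_k^{i-1})$, and exploit the three monotonicity inequalities recalled right after~\eqref{eq:par_der} (together with $|\,(\alpha_k^i+\lambda\beta)^+-\alpha_k^{i-1}|\le|\alpha_k^i+\lambda\beta-\alpha_k^{i-1}|$ for the $L^2$-penalty term, which holds because $\alpha_k^{i-1}\ge 0$) to discard the positive part. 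This yields
\begin{equation*}
\mathcal E(\alpha_k^i,e_k^i,p_k^i)+\tfrac{\varepsilon}{2\tau_k}\|\alpha_k^i-\alpha_k^{i-1}\|_2^2\le\mathcal E(\alpha_k^i+\lambda\beta,e_k^i,p_k^i)+\tfrac{\varepsilon}{2\tau_k}\|\alpha_k^i+\lambda\beta-\alpha_k^{i-1}\|_2^2.
\end{equation*}
Dividing by $\lambda$, letting $\lambda\to 0^+$, and invoking~\eqref{eq:par_der} together with the chain rule on the penalty produces exactly~\eqref{eq:st_aki}.

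For~\eqref{eq:st_daki}, I will prove the two opposite inequalities. The $\ge 0$ direction is immediate: since $\alpha_k^i\le\alpha_k^{i-1}$, one has $\dot\alpha_k^i\le 0$, so $\dot\alpha_k^i$ is an admissible test in~\eqref{eq:st_aki}, giving $\partial_\alpha\mathcal E(\alpha_k^i,e_k^i,p_k^i)[\dot\alpha_k^i]+\varepsilon\|\dot\alpha_k^i\|_2^2\ge 0$. For the $\le 0$ direction I would use a \emph{convex-combination} perturbation $\alpha_\lambda\coloneqq(1-\lambda)\alpha_k^i+\lambda\alpha_k^{i-1}$ for $\lambda\in[0,1]$: being a convex combination of functions in $[0,1]$ bounded above by $\alpha_k^{i-1}$, it lies in $\mathcal D(\alpha_k^{i-1})$, and no positive-part truncation is needed. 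Testing minimality~\eqref{eq:min_ki} with $(\alpha_\lambda,u_k^i,e_k^i,p_k^i)$ gives
\begin{equation*}
\mathcal E(\alpha_k^i,e_k^i,p_k^i)+\tfrac{\varepsilon}{2\tau_k}\|\alpha_k^i-\alpha_k^{i-1}\|_2^2\le\mathcal E(\alpha_\lambda,e_k^i,p_k^i)+\tfrac{\varepsilon}{2\tau_k}(1-\lambda)^2\|\alpha_k^i-\alpha_k^{i-1}\|_2^2,
\end{equation*}
since $\alpha_\lambda-\alpha_k^{i-1}=(1-\lambda)(\alpha_k^i-\alpha_k^{i-1})$. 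Dividing by $\lambda$, sending $\lambda\to 0^+$, and using $\frac{(1-\lambda)^2-1}{\lambda}\to -2$ yields
\begin{equation*}
0\le\partial_\alpha\mathcal E(\alpha_k^i,e_k^i,p_k^i)[\alpha_k^{i-1}-\alpha_k^i]-\tfrac{\varepsilon}{\tau_k}\|\alpha_k^i-\alpha_k^{i-1}\|_2^2=-\tau_k\bigl(\partial_\alpha\mathcal E(\alpha_k^i,e_k^i,p_k^i)[\dot\alpha_k^i]+\varepsilon\|\dot\alpha_k^i\|_2^2\bigr),
\end{equation*}
which gives the reverse inequality and closes the argument.

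There is no serious obstacle: the only technical point is the admissibility issue for~\eqref{eq:st_aki}, which is handled exactly by the extension/positive-part device already developed in the proof of Lemma~\ref{lem:qs_var_ineq}(a). The key observation making~\eqref{eq:st_daki} work is that the convex-combination perturbation toward $\alpha_k^{i-1}$ is automatically in $\mathcal D(\alpha_k^{i-1})$, so no truncation is needed and the two-sided variation produces the equality.
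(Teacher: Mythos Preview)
Your proof is correct and follows essentially the same approach as the paper. For~\eqref{eq:st_aki} the argument is identical; for the $\le 0$ direction of~\eqref{eq:st_daki}, the paper tests with $\alpha_k^i-\lambda\dot\alpha_k^i$ for $\lambda\in(0,\tau_k)$, which is exactly your convex combination $\alpha_\lambda=(1-\lambda)\alpha_k^i+\lambda\alpha_k^{i-1}$ after the reparametrization $\lambda\mapsto\lambda\tau_k$.
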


\begin{proof}
We extend $d$ and $\mathbb B$ to $(-\infty,0)$ as in~\eqref{eq:dB_ext}. We fix $k\in\N$ and $i=1,\dots,k$. Let $\beta\in H^1(\Omega)$ with $\beta\le 0$  and let $\lambda>0$. If we consider
$(\alpha_k^i+\lambda\beta)^+\in \mathcal D(\alpha_k^{i-1})$,  by~\eqref{eq:min_ki} we get
\begin{align*}
\mathcal E(\alpha_k^i,e_k^i,p_k^i)+\frac{\varepsilon}{2\tau_k}\|\alpha_k^i-\alpha_k^{i-1}\|_2^2&\le \mathcal E((\alpha_k^i+\lambda\beta)^+,e_k^i,p_k^i)+\frac{\varepsilon}{2\tau_k}\|(\alpha_k^i+\lambda\beta)^+-\alpha_k^{i-1}\|_2^2.
\end{align*}
Moreover, by~\eqref{eq:d},~\eqref{eq:B1}, and the fact that
$\alpha_k^{i-1}\ge 0$ we have
\begin{align*}
&D((\alpha_k^i+\lambda\beta)^+)\le D(\alpha_k^i+\lambda\beta)& &\|\nabla((\alpha_k^i+\lambda\beta)^+)\|^2_2\le \|\nabla(\alpha_k^i+\lambda\beta)\|_2^2,\\
&\tilde Q((\alpha_k^i+\lambda \beta)^+,p_k^i)\le \tilde Q(\alpha_k^i+\lambda \beta,p_k^i)  & &\|(\alpha_k^i+\lambda\beta)^+-\alpha_k^{i-1}\|_2^2\le \|\alpha_k^i+\lambda\beta-\alpha_k^{i-1}\|_2^2.
\end{align*}
Hence
\begin{align*}
\mathcal E(\alpha_k^i,e_k^i,p_k^i)+\frac{\varepsilon}{2\tau_k}\|\alpha_k^i-\alpha_k^{i-1}\|_2^2&\le \mathcal E(\alpha_k^i+\lambda\beta,e_k^i,p_k^i)+\frac{\varepsilon}{2\tau_k}\|\alpha_k^i+\lambda\beta-\alpha_k^{i-1}\|_2^2.
\end{align*}
By dividing for $\lambda>0$, sending $\lambda\to 0^+$, and using~\eqref{eq:par_der} we get~\eqref{eq:st_aki}.

If we use $\beta=\dot\alpha_k^i$ in~\eqref{eq:st_aki} we get the first inequality of~\eqref{eq:st_daki}. To get the other inequality, we fix $\lambda\in (0,\tau_k)$ and we consider $\alpha_k^i-\lambda\dot\alpha_k^i\in\mathcal D(\alpha_k^{i-1})$. Hence, thanks~\eqref{eq:min_ki} we have
\begin{equation*}\mathcal E(\alpha_k^i,e_k^i,p_k^i)+\frac{\varepsilon}{2\tau_k}\|\alpha_k^i-\alpha_k^{i-1}\|_2^2\le \mathcal E(\alpha_k^i-\lambda\dot\alpha_k^i,e_k^i,p_k^i)+\frac{\varepsilon}{2\tau_k}\|\alpha_k^i-\alpha_k^{i-1}-\lambda\dot\alpha_k^i\|_2^2.\end{equation*}
By dividing for $\lambda>0$, sending $\lambda\to 0^+$, and using~\eqref{eq:par_der} we derive~\eqref{eq:st_daki}.
\end{proof}

\begin{remark}
As in Section~\ref{sec:gl_qe}, the assumptions $\dot d(0)\le 0$ and $\dot{\B}(0)\xi:\xi\le 0$ for all $\xi\in\M$ in~\eqref{eq:d} and~\eqref{eq:B1} are needed to prove that~\eqref{eq:st_aki} holds.
\end{remark}

\begin{remark}
Arguing as before, by~\eqref{eq:ic} for all $\beta\in H^1(\Omega)$ with $\beta\le 0$ we deduce
\begin{equation}\label{eq:st_ak0}
\partial_\alpha\mathcal E(\alpha^0,e^0,p^0)[\beta]\ge 0.
\end{equation}
\end{remark}

In the same spirit of Theorem~\ref{thm:cont_time}, in the following lemma we show that we can estimate the norms of $\dot u_k$, $\dot e_k$, $\dot p_k$  by the norm of  $\dot \alpha_k$ and $\dot w_k$ times a constant independent on $k$ and $\varepsilon$. The proof follows the one of~\cite[Lemma 3.6]{CrLa}.

\begin{lemma}\label{lem:ki_est}
Assume~\eqref{eq:d}--\eqref{eq:B3},~\eqref{eq:K}, and~\eqref{eq:ic}. Define for all $k\in\N$ and $i=1\,\dots,k$
\begin{equation*}
\omega_k^i\coloneqq \|\alpha_k^i-\alpha_k^{i-1}\|_2+\|\EE w_k^i-\EE w_k^{i-1}\|_2.
\end{equation*}
There exists a constant $C>0$, independent on $\varepsilon,k,i$, such that for all $i=1,\dots,k$ and $k\in\N$
\begin{equation}\label{eq:ekikiwki}
\|e_k^i-e_k^{i-1}\|_2+\|p_k^i-p_k^{i-1}\|_{H^1}\le C\omega_k^i.
\end{equation}
In particular, for all $i=1,\dots,k$ and $k\in\N$ we have
\begin{equation}\label{eq:ukiwki}
\|u_k^i-u_k^{i-1}\|_{H^1}\le C(\|w_k^i-w_k^{i-1}\|_2+\omega_k^i).
\end{equation}
Moreover, for all $i=1,\dots,k$ and $k\in\N$
\begin{equation}\label{eq:Hdotpk}
\mathcal H(\dot p_k)\le (\C e_k^i,\dot p_k)_2-2(\B(\alpha_k^i)p_k^i,\dot p_k^i)-2(\nabla p_k^i,\nabla\dot p_k^i)_2+C\tau_k\left(\|\dot\alpha_k^i\|_2^2+\|\EE \dot w_k^i\|_2^2\right).
\end{equation}
\end{lemma}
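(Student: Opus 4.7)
I will prove the three bounds~\eqref{eq:ekikiwki},~\eqref{eq:ukiwki},~\eqref{eq:Hdotpk} in that order, mimicking the strategy of Theorem~\ref{thm:cont_time} in the discrete viscous setting (following~\cite[Lemma 3.6]{CrLa}). Write $\delta f := f_k^i - f_k^{i-1}$ and $\delta w := w_k^i - w_k^{i-1}$. Three ingredients drive the proof of~\eqref{eq:ekikiwki}: (i) Lemma~\ref{lem:ki_stab}(c) at step $i-1$ tested with the competitor $(u_k^i - \delta w, e_k^i - \EE\delta w, p_k^i)\in\mathcal A(w_k^{i-1})$, the discrete analog of~\eqref{eq:lowbou2}; (ii) the discrete energy inequality obtained from~\eqref{eq:min_ki} at step $i$ with competitor $(\alpha_k^{i-1}, u_k^{i-1}+\delta w, e_k^{i-1}+\EE\delta w, p_k^{i-1})$, giving $\mathcal H(\delta p) + \tfrac{\varepsilon}{2\tau_k}\|\delta\alpha\|_2^2 \le \mathcal E(\alpha_k^{i-1}, e_k^{i-1}, p_k^{i-1}) - \mathcal E(\alpha_k^i, e_k^i, p_k^i) + (\C e_k^{i-1}, \EE\delta w)_2 + \mathcal Q(\EE\delta w)$, the analog of~\eqref{eq:lowbou3}; and (iii) the Euler-Lagrange equality~\eqref{eq:st_daki} tested with $\dot\alpha_k^i$, which after polarization yields the identity $\|\nabla\delta\alpha\|_2^2 + \tfrac{\varepsilon}{\tau_k}\|\delta\alpha\|_2^2 = \|\nabla\alpha_k^{i-1}\|_2^2 - \|\nabla\alpha_k^i\|_2^2 + (\text{Taylor errors in }\delta\alpha)$, the analog of~\eqref{eq:lowbou1}.

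Substituting (ii) into (i) and rewriting the resulting $\|\nabla\alpha_k^{i-1}\|_2^2 - \|\nabla\alpha_k^i\|_2^2$ via (iii) produces, after the cancellations mirroring the continuous proof, an inequality of the form
\begin{equation*}
\mathcal Q(\delta e) + \tilde Q(\alpha_k^{i-1}, \delta p) + \|\nabla\delta p\|_2^2 + \|\nabla\delta\alpha\|_2^2 + \tfrac{\varepsilon}{2\tau_k}\|\delta\alpha\|_2^2 \le C\|\delta\alpha\|_4^2 + C\|\delta\alpha\|_4\|\delta p\|_4 + \mathcal Q(\EE\delta w),
\end{equation*}
with Taylor errors estimated through~\eqref{eq:d},~\eqref{eq:B1}, the uniform bounds~\eqref{eq:unif-est-eps}, and Remark~\ref{rem:emb}. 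Separately, combine the plastic bound $r_H\|\delta p\|_1 \le \mathcal H(\delta p)$ from~\eqref{eq:rH} with (ii) to derive $\|\delta p\|_1^2 \le C\omega_k^{i,2} + (\text{absorbable})$. Apply Corollary~\ref{coro:H1norm} to upgrade to the $H^1$-norm of $\delta p$, Lemma~\ref{lem:Cdelta} at $\theta=4$ to split $\|\delta\alpha\|_4^2 \le C_\delta\|\delta\alpha\|_1^2 + \delta\|\nabla\delta\alpha\|_2^2$ (the gradient piece is then absorbed on the LHS for $\delta$ small), and Young's inequality on the cross term $\|\delta\alpha\|_4\|\delta p\|_4$; this concludes~\eqref{eq:ekikiwki}.

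For~\eqref{eq:ukiwki}, apply Korn's inequality to $u_k^i - u_k^{i-1} - \delta w \in H^1_0(\Omega;\R^n)$, using $\EE(u_k^i - u_k^{i-1}) = \delta e + \delta p$, and combine with~\eqref{eq:ekikiwki} and the triangle inequality. For~\eqref{eq:Hdotpk}, test~\eqref{eq:min_ki} with the admissible competitor $(\alpha_k^i, u_k^i - \lambda(\dot u_k^i - \dot w_k^i), e_k^i - \lambda(\dot e_k^i - \EE\dot w_k^i), p_k^i - \lambda\dot p_k^i) \in \mathcal D(\alpha_k^{i-1})\times\mathcal A(w_k^i)$ for $\lambda\in(0,\tau_k]$, noting that the damage is unchanged and the boundary condition preserved. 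Positive $1$-homogeneity of $\mathcal H$ yields $\mathcal H(\tau_k\dot p_k^i) - \mathcal H((\tau_k-\lambda)\dot p_k^i) = \lambda\mathcal H(\dot p_k^i)$; expanding the quadratic $\mathcal E$, dividing by $\lambda$, and choosing $\lambda=\tau_k$ produces the bound with extra term $\tau_k[\mathcal Q(\dot e_k^i - \EE\dot w_k^i) + \tilde Q(\alpha_k^i, \dot p_k^i) + \|\nabla\dot p_k^i\|_2^2]$, which by~\eqref{eq:ekikiwki} (giving $\|\dot e_k^i\|_2 + \|\dot p_k^i\|_{H^1} \le C(\|\dot\alpha_k^i\|_2 + \|\EE\dot w_k^i\|_2)$) is controlled by $C\tau_k(\|\dot\alpha_k^i\|_2^2 + \|\EE\dot w_k^i\|_2^2)$. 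The integration by parts $(\C e_k^i, \dot p_k^i) = -(\C e_k^i, \dot e_k^i - \EE\dot w_k^i)$ from~\eqref{eq:int_for} applied to $(\dot u_k^i - \dot w_k^i, \dot e_k^i - \EE\dot w_k^i, \dot p_k^i)\in\mathcal A(0)$ via~\eqref{eq:divCeki} completes the proof of~\eqref{eq:Hdotpk}.

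The hard part is controlling the viscous contribution $\tfrac{\varepsilon}{\tau_k}\|\delta\alpha\|_2^2$ uniformly in $\varepsilon$ and $\tau_k$: the decisive cancellation is that the $-\tfrac{\varepsilon}{2\tau_k}\|\delta\alpha\|_2^2$ arising from (ii) combines with the $+\tfrac{\varepsilon}{\tau_k}\|\delta\alpha\|_2^2$ injected by (iii) to leave only $+\tfrac{\varepsilon}{2\tau_k}\|\delta\alpha\|_2^2$, which lands on the favorable side of the key inequality together with $\|\nabla\delta\alpha\|_2^2$---the discrete counterpart of the way $\|\nabla\alpha\|_2^2$ telescopes across~\eqref{eq:lowbou1}+\eqref{eq:lowbou2}+\eqref{eq:lowbou3} in Theorem~\ref{thm:cont_time}.
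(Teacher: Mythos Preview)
Your arguments for~\eqref{eq:ukiwki} and~\eqref{eq:Hdotpk} are correct and essentially match the paper's. However, your scheme for~\eqref{eq:ekikiwki} contains a sign error that breaks the proof. When you substitute (ii) into (i), the term $\|\nabla\alpha_k^{i-1}\|_2^2 - \|\nabla\alpha_k^i\|_2^2$ appears on the \emph{right-hand side} (the upper-bound side) of the resulting inequality. Replacing it via your identity (iii) then puts $\|\nabla\delta\alpha\|_2^2 + \tfrac{\varepsilon}{\tau_k}\|\delta\alpha\|_2^2$ on the right-hand side as well. Combined with the $-\tfrac{\varepsilon}{2\tau_k}\|\delta\alpha\|_2^2$ from (ii), the net is $+\tfrac{\varepsilon}{2\tau_k}\|\delta\alpha\|_2^2$ and $+\|\nabla\delta\alpha\|_2^2$ on the \emph{unfavorable} side, not the favorable one. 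Neither term is controlled by $(\omega_k^i)^2$ uniformly in $\varepsilon,\tau_k$: the viscous coefficient $\tfrac{\varepsilon}{2\tau_k}$ can blow up, and $\|\nabla\delta\alpha\|_2$ is not bounded by $\|\delta\alpha\|_2$. The analogy with Theorem~\ref{thm:cont_time} breaks down because there~\eqref{eq:st_1} is tested at the \emph{earlier} time $t_1$, so~\eqref{eq:lowbou1} produces $\|\nabla\alpha(t_2)\|_2^2 - \|\nabla\alpha(t_1)\|_2^2$, whose sign is opposite to the contribution from~\eqref{eq:lowbou3}; your (iii), based on~\eqref{eq:st_daki} at step $i$, yields the same sign as (ii), so the two reinforce instead of cancelling.

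The paper bypasses this difficulty by a simpler choice of competitor in step (ii): it tests~\eqref{eq:min_ki} at step $i$ with damage $\alpha_k^i$ rather than $\alpha_k^{i-1}$, namely $(\alpha_k^i,\, u_k^{i-1}+\delta w,\, e_k^{i-1}+\EE\delta w,\, p_k^{i-1})$. Since the damage component equals that of the minimizer, the terms $D$, $\|\nabla\alpha\|_2^2$ \emph{and} the viscous penalty $\tfrac{\varepsilon}{2\tau_k}\|\,\cdot\,-\alpha_k^{i-1}\|_2^2$ are identical on both sides and cancel exactly, so neither $\|\nabla\delta\alpha\|$ nor $\tfrac{\varepsilon}{\tau_k}$ ever appears. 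Combining this with your (i) directly yields
\[
\mathcal Q(\delta e) + \tilde Q(\alpha_k^{i-1},\delta p) + \|\nabla\delta p\|_2^2 \le \mathcal Q(\EE\delta w) - \big([\B(\alpha_k^i)-\B(\alpha_k^{i-1})]\,\delta p,\, p_k^i+p_k^{i-1}\big)_2 \le C\big(\|\EE\delta w\|_2^2 + \|\delta\alpha\|_2\|\delta p\|_{H^1}\big),
\]
and~\eqref{eq:st_daki} is not needed at all for~\eqref{eq:ekikiwki}. From here your remaining steps (bounding $\|\delta p\|_1$ via $r_H\|\delta p\|_1\le\mathcal H(\delta p)$ and the same competitor, then Corollary~\ref{coro:H1norm} and Young absorption) go through unchanged.
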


\begin{proof}
We fix $k\in\N$ and $i=1,\dots,k$. If we test Lemma~\ref{lem:ki_stab}-(c) in $i-1$ with
\begin{equation*}
(u_k^i-(w_k^i-w_k^{i-1}),e_k^i-\EE(w_k^i-w_k^{i-1}),p_k^i)\in \mathcal A(w_k^{i-1}),
\end{equation*}
we get
\begin{align*}
&\mathcal Q(e_k^{i-1})+\mathcal Q(e_k^i-e_k^{i-1}-\EE(w_k^i-w_k^{i-1}))\\
&\quad+\tilde Q(\alpha_k^{i-1},p_k^{i-1})+\tilde Q(\alpha_k^{i-1},p_k^i-p_k^{i-1})+\|\nabla p_k^{i-1}\|_2^2+\|\nabla p_k^i-\nabla p_k^{i-1}\|_2^2\\
&\le \mathcal Q(e_k^i-\EE(w_k^i-w_k^{i-1}))+\tilde Q(\alpha_k^{i-1},p_k^i)+\|\nabla p_k^i\|_2^2+\mathcal H(p_k^i-p_k^{i-1}).
\end{align*}
In particular
\begin{align}
&\mathcal Q(e_k^i-e_k^{i-1})+\tilde Q(\alpha_k^{i-1},p_k^i-p_k^{i-1})+\|\nabla p_k^i-\nabla p_k^{i-1}\|_2^2\nonumber\\
&\le \mathcal Q(e_k^i)-\mathcal Q(e_k^{i-1})-(\C e_k^{i-1},\EE(w_k^i-w_k^{i-1}))\nonumber\\
&\quad+\tilde Q(\alpha_k^{i-1},p_k^i)-\tilde Q(\alpha_k^{i-1},p_k^{i-1})+\|\nabla p_k^i\|_2^2-\|\nabla p_k^{i-1}\|_2^2+\mathcal H(p_k^i-p_k^{i-1}).\label{eq:QQpk}
\end{align}
We now use the minimality of $(\alpha_k^i,u_k^i,e_k^i,p_k^i)\in \mathcal D(\alpha_k^{i-1})\times \mathcal A(w_k^i)$ with
\begin{equation*}
(\alpha_k^i,u_k^{i-1}+(w_k^i-w_k^{i-1}),e_k^{i-1}+\EE(w_k^i-w_k^{i-1}),p_k^{i-1})\in \mathcal D(\alpha_k^{i-1})\times \mathcal A(w_k^i),
\end{equation*}
and we get
\begin{align*}
&\mathcal Q(e_k^i)+\tilde Q(\alpha_k^i,p_k^i)+\|\nabla p_k^i\|_2^2+\mathcal H(p_k^i-p_k^{i-1})\\
&\le \mathcal Q(e_k^{i-1}+\EE(w_k^i-w_k^{i-1}))+\tilde Q(\alpha_k^i,p_k^{i-1})+\|\nabla p_k^{i-1}\|_2^2.
\end{align*}
Hence
\begin{align}
&\mathcal H(p_k^i-p_k^{i-1})\nonumber\\
&\le \mathcal Q(e_k^{i-1}+\EE(w_k^i-w_k^{i-1}))-\mathcal Q(e_k^i)+\tilde Q(\alpha_k^i,p_k^{i-1})-\tilde Q(\alpha_k^i,p_k^i)+\|\nabla p_k^{i-1}\|_2^2-\|\nabla p_k^i\|_2^2.\label{eq:Hpk}
\end{align}
By combining together~\eqref{eq:QQpk} and~\eqref{eq:Hpk}, we deduce the existence of a constant $C_1>0$, independent on $\varepsilon,k,i$, such that
\begin{align}
&\mathcal Q(e_k^i-e_k^{i-1})+\tilde Q(\alpha_k^{i-1},p_k^i-p_k^{i-1})+\|\nabla p_k^i-\nabla p_k^{i-1}\|_2^2\nonumber\\
&\le \mathcal Q(\EE(w_k^i-w_k^{i-1}))+\tilde Q(\alpha_k^{i-1},p_k^i)-\tilde Q(\alpha_k^i,p_k^i)-\tilde Q(\alpha_k^{i-1},p_k^{i-1})+\tilde Q(\alpha_k^i,p_k^{i-1})\nonumber\\
&\le \frac{\gamma_2}{2}\|\EE w_k^i-\EE w_k^{i-1}\|_2^2-([\B(\alpha_k^i)-\B(\alpha_k^{i-1})](p_k^i-p_k^{i-1}),p_k^i+p_k^{i-1})_2\nonumber\\
&\le C_1\left(\|\EE w_k^i-\EE w_k^{i-1}\|_2^2+\|\alpha_k^i-\alpha_k^{i-1}\|_2\|p_k^i-p_k^{i-1}\|_{H^1}\right).\label{eq:Qeps11}
\end{align}
Moreover, by Remarks~\ref{rem:BCS} and~\ref{rem:emb} and the uniform estimate~\eqref{eq:unif-est-eps}, there is a constant $C_2>0$, independent on $\varepsilon,k,i$, such that
\begin{align*}
r_H\|p_k^i-p_k^{i-1}\|_1&\le \mathcal H(p_k^i-p_k^{i-1})\\
&\le \mathcal Q(e_k^i)-\mathcal Q(e_k^{i-1})+\mathcal Q(\EE(w_k^i-w_k^{i-1}))+(\C e_k^{i-1},\EE(w_k^i-w_k^{i-1}))_2\\
&\quad+\tilde Q(\alpha_k^i,p_k^{i-1})-\tilde Q(\alpha_k^{i-1},p_k^{i-1})+\tilde Q(\alpha_k^{i-1},p_k^i)-\tilde Q(\alpha_k^i,p_k^i)\\
&\quad+\tilde Q(\alpha_k^{i-1},p_k^{i-1})-\tilde Q(\alpha_k^{i-1},p_k^i)+\|\nabla p_k^{i-1}\|_2^2-\|\nabla p_k^i\|_2^2\\
&\le C_2\left(\sqrt{\mathcal Q(e_k^i-e_k^{i-1})}+\sqrt{\tilde Q(\alpha_k^{i-1},p_k^i-p_k^{i-1})}+\|\nabla p_k^i-\nabla p_k^{i-1}\|_2\right)\\
&\quad+C_2(\|\EE(w_k^i-w_k^{i-1})\|_2+\|\alpha_k^i-\alpha_k^{i-1}\|_2).
\end{align*}
Therefore, we have
\begin{align}
&\|p_k^i-p_k^{i-1}\|_1^2\nonumber\\
&\le C_3\left(\mathcal Q(e_k^i-e_k^{i-1})+\tilde Q(\alpha_k^{i-1},p_k^i-p_k^{i-1})+\|\nabla p_k^i-\nabla p_k^{i-1}\|_2^2+\|\EE(w_k^i-w_k^{i-1})\|_2^2+ \|\alpha_k^i-\alpha_k^{i-1}\|^2_2\right)\nonumber\\
&\le C_4(\|\alpha_k^i-\alpha_k^{i-1}\|_2\|p_k^i-p_k^{i-1}\|_{H^1}+\|\EE w_k^i-\EE w_k^{i-1}\|_2^2+\|\alpha_k^i-\alpha_k^{i-1}\|_2^2),\label{eq:Qeps12} 
\end{align}
for two constants $C_3,C_4>0$, independent on $\varepsilon,k,i$.  If we sum~\eqref{eq:Qeps11} and~\eqref{eq:Qeps12}, and we use~\eqref{eq:C2} and Corollary~\ref{coro:H1norm}, we derive that
\begin{align*}
\|e_k^i-e_k^{i-1}\|_2^2+\|p_k^i-p_k^{i-1}\|_{H^1}^2&\le C_5(\|\alpha_k^i-\alpha_k^{i-1}\|_2\|p_k^i-p_k^{i-1}\|_{H^1}+\|\EE w_k^i-\EE w_k^{i-1}\|_2^2+\|\alpha_k^i-\alpha_k^{i-1}\|_2^2)\\
&\le \frac{1}{2}\|p_k^i-p_k^{i-1}\|_{H^1}^2+C_6(\|\EE w_k^i-\EE w_k^{i-1}\|_2^2+\|\alpha_k^i-\alpha_k^{i-1}\|_2^2),
\end{align*}
for two constants $C_5,C_6>0$, independent on $\varepsilon,k,i$. This gives~\eqref{eq:ekikiwki}. Moreover, to obtain~\eqref{eq:ukiwki} it is enough to combine~\eqref{eq:ekikiwki} with Korn Inequality. 

Finally, by combining~\eqref{eq:Hpk} and~\eqref{eq:int_for}, with~\eqref{eq:ekikiwki} and the identity
$$|a|^2-|b|^2=-2b\cdot(b-a)+|b-a|^2\quad\text{for all $a,b\in\R^d$},$$
we obtain
\begin{align*}
&\mathcal H(p_k^i-p_k^{i-1})\\
&\le (\C e_k^i,\EE(w_k^i-w_k^{i-1})-(e_k^i-e_k^{i-1}))+\mathcal Q(\EE(w_k^i-w_k^{i-1}))\\
&\quad-(\C (e_k^i-e_k^{i-1}),\EE(w_k^i-w_k^{i-1}))_2+\mathcal Q(e_k^i-e_k^{i-1})\\
&\quad -2(\mathbb B(\alpha_k^i)p_k^i,p_k^i-p_k^{i-1})_2+\tilde Q(p_k^i-p_k^{i-1})-2(\nabla p_k^i,\nabla p_k^i-\nabla p_k^{i-1})_2+\|\nabla p_k^i-\nabla p_k^{i-1}\|_2^2\\
&\le (\C e_k^i,p_k^i-p_k^{i-1})_2-2(\mathbb B(\alpha_k^i)p_k^i,p_k^i-p_k^{i-1})_2-2(\nabla p_k^i,\nabla p_k^i-\nabla p_k^{i-1})_2\\
&\quad+C\left(\|\alpha_k^i-\alpha_k^{i-1}\|_2^2+\|\EE w_k^i-\EE w_k^{i-1}\|_2^2\right),
\end{align*}
which gives~\eqref{eq:Hdotpk}.
\end{proof}

\begin{remark}
We can rephrase Lemma~\ref{lem:ki_est} in the following way: there is a constant $C$, independent on $\varepsilon,k,i$, such that for all $k\in\N$ and $i=1,\dots,k$ 
\begin{equation*}\|\dot e_k^i\|_2+\|\dot p_k^i\|_{H^1}\le C(\|\dot \alpha_k^i\|_{H^1}+\|\EE \dot w_k^i\|_2),\qquad \|\dot u_k^i\|_{H^1}\le C(\|\dot \alpha_k^i\|_{H^1}+\|\dot w_k^i\|_{H^1}).
\end{equation*}
\end{remark}

We now estimate, by means of the following two lemmas, the regularity in time of the viscously regularized evolutions, taking value in the target spaces of the internal variables, with respect to the $H^1(0,T)$ norm (for fixed $\varepsilon\in(0,1)$) and the $W^{1,1}(0,T)$ norm (uniform in $\varepsilon\in(0,1)$).

\begin{lemma}\label{lem:w11-est}
Assume~\eqref{eq:d}--\eqref{eq:B3},~\eqref{eq:K}, and~\eqref{eq:ic}. There exists a constant $C>0$, independent on $\varepsilon,k,i$, such that
\begin{align}\label{eq:eps_est}
\|\dot\alpha_k^i\|_2^2+\sum_{j=1}^i\tau_k\|\dot\alpha_k^j\|_{H^1}^2\le \frac{C}{\varepsilon}{\rm e}^{\frac{C}{\varepsilon}t_k^i}\quad\text{for all $k\ge \frac{C}{\varepsilon}$ and $i=1,\dots,k$.}
\end{align}
\end{lemma}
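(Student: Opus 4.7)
The plan is to extract a discrete differential inequality for $a_i := \|\dot\alpha_k^i\|_2^2$ by testing the two variational relations \eqref{eq:st_aki} and \eqref{eq:st_daki} against $\dot\alpha_k^i$ at consecutive steps $i-1$ and $i$. The test $\beta = \dot\alpha_k^i$ is admissible in \eqref{eq:st_aki} at step $i-1$ precisely because $\dot\alpha_k^i \le 0$ by the irreversibility of damage. Subtracting the inequality from the equality \eqref{eq:st_daki} at step $i$ yields
\[
\varepsilon \|\dot\alpha_k^i\|_2^2 \le \varepsilon(\dot\alpha_k^{i-1},\dot\alpha_k^i)_2 + \bigl[\partial_\alpha\mathcal E(\alpha_k^{i-1},e_k^{i-1},p_k^{i-1}) - \partial_\alpha\mathcal E(\alpha_k^i,e_k^i,p_k^i)\bigr][\dot\alpha_k^i].
\]
The Dirichlet part of $\partial_\alpha\mathcal E$ contributes exactly $-2\tau_k\|\nabla\dot\alpha_k^i\|_2^2$ to the right-hand side, which I move to the left as the gradient regularization responsible for the $H^1$-estimate.

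The technical heart is controlling the remaining contributions. The $d$-term is bounded by $C\tau_k\|\dot\alpha_k^i\|_2^2$ via the $C^{1,1}$-regularity of $d$. For the $\mathbb B$-term I use the telescopic decomposition
\[
\dot{\mathbb B}(\alpha_k^{i-1})p_k^{i-1}{:}p_k^{i-1} - \dot{\mathbb B}(\alpha_k^i)p_k^i{:}p_k^i = [\dot{\mathbb B}(\alpha_k^{i-1})-\dot{\mathbb B}(\alpha_k^i)]p_k^{i-1}{:}p_k^{i-1} + \dot{\mathbb B}(\alpha_k^i)(p_k^{i-1}-p_k^i){:}(p_k^{i-1}+p_k^i).
\]
The first piece I handle by a four-factor Hölder inequality, the uniform $H^1$-bound on $p_k^{i-1}$ from \eqref{eq:unif-est-eps}, and the Sobolev embedding $H^1\hookrightarrow L^4$ (which is where the restriction $n=2,3$ is used), getting $C\tau_k\|\dot\alpha_k^i\|_{H^1}^2$. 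The second piece I control by Lemma~\ref{lem:ki_est}, which gives $\|p_k^{i-1}-p_k^i\|_{H^1}\le C\tau_k(\|\dot\alpha_k^i\|_2 + \|\EE\dot w_k^i\|_2)$, so that (after Young) its contribution is $C\tau_k(\|\dot\alpha_k^i\|_2^2 + \|\EE\dot w_k^i\|_2^2)$. Combining these bounds with $\varepsilon(\dot\alpha_k^{i-1},\dot\alpha_k^i)_2\le\tfrac{\varepsilon}{2}(\|\dot\alpha_k^{i-1}\|_2^2 + \|\dot\alpha_k^i\|_2^2)$ delivers
\[
\tfrac{\varepsilon}{2}(a_i - a_{i-1}) + 2\tau_k\|\nabla\dot\alpha_k^i\|_2^2 \le C\tau_k\,a_i + C\tau_k\|\nabla\dot\alpha_k^i\|_2^2 + C\tau_k\|\EE\dot w_k^i\|_2^2,
\]
with $C$ independent of $\varepsilon, k, i$. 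Crucially, the gradient term on the right is absorbed by the one on the left as soon as $\tau_k\le 1/C$, giving a clean discrete inequality of the form $(1-\mu\tau_k)a_i\le a_{i-1} + \mu\tau_k\|\EE\dot w_k^i\|_2^2$ with $\mu=2C/\varepsilon$.

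Finally, the restriction $k\ge C/\varepsilon$ forces $\mu\tau_k\le 1/2$, so iterating and using $(1+x)^i\le e^{ix}$ produces $a_i \le e^{Ct_k^i/\varepsilon}\bigl(a_1 + C\int_0^T\|\EE\dot w\|_2^2\de s\bigr)$ up to constants. The seed $a_1\le C/\varepsilon$ is obtained by running the same argument at $i=1$, using \eqref{eq:st_ak0} in place of \eqref{eq:st_aki} at step $0$ and the elementary bound $\tau_k\|\EE\dot w_k^1\|_2^2\le\int_0^{\tau_k}\|\EE\dot w\|_2^2\de s$ from Cauchy--Schwarz. Telescoping the differential inequality from $j=1$ to $i$ yields $\sum_j\tau_k\|\nabla\dot\alpha_k^j\|_2^2$, and combined with $\sum_j\tau_k a_j \le T\max_j a_j$ this produces the bound on $\sum_j\tau_k\|\dot\alpha_k^j\|_{H^1}^2$ claimed in the statement. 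The main obstacle in the argument is the cubic structure of the first subpiece of the $\mathbb B$-term, which must be bounded in such a way that the resulting coefficient in front of $\|\nabla\dot\alpha_k^i\|_2^2$ stays independent of $\varepsilon$: only in that case does the absorption work for $k\ge C/\varepsilon$ and does the exponential factor $e^{Ct_k^i/\varepsilon}$ retain its correct form.
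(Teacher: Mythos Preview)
Your proposal is correct and follows essentially the same route as the paper: test \eqref{eq:st_daki} at step $i$ against \eqref{eq:st_aki} (resp.\ \eqref{eq:st_ak0}) at step $i-1$ with $\beta=\dot\alpha_k^i$, harvest the gradient term $-2\tau_k\|\nabla\dot\alpha_k^i\|_2^2$, control the $d$- and $\mathbb B$-differences via $C^{1,1}$ regularity and Lemma~\ref{lem:ki_est}, and conclude by a discrete Gronwall argument. The only cosmetic difference is that the paper bounds the cubic $\mathbb B$-piece by $C\tau_k\|\dot\alpha_k^i\|_4^2$ and then invokes the interpolation Lemma~\ref{lem:Cdelta} (so that the absorption of the gradient part works for \emph{any} $\tau_k$), whereas you pass directly to $C\tau_k\|\dot\alpha_k^i\|_{H^1}^2$ via $H^1\hookrightarrow L^4$ and absorb under the extra but harmless constraint $\tau_k\le 1/C$, which is anyway implied by the final restriction $k\ge C/\varepsilon$.
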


\begin{proof}
We fix $k\in\N$. By the uniform estimate~\eqref{eq:unif-est-eps}, the identity~\eqref{eq:st_daki} with $i=1$, the variational inequality~\eqref{eq:st_ak0} with $\beta=\dot\alpha_k^1$, the~\eqref{eq:ekikiwki}, and Remark~\ref{rem:emb} we have 
\begin{align}
\varepsilon\|\dot\alpha_k^1\|_2^2+2\tau_k\|\dot\alpha_k^1\|_{H^1}^2&\le\partial_\alpha\mathcal E(\alpha_k^0,e_k^0,p_k^0)[\dot\alpha_k^1]-\partial_\alpha\mathcal E(\alpha_k^1,e_k^1,p_k^1)[\dot\alpha_k^1]+2\tau_k\|\dot\alpha_k^1\|_{H^1}^2\nonumber\\
&=\int_\Omega [\dot{d}(\alpha_k^0(x))-\dot{d}(\alpha_k^1(x))]\dot\alpha_k^1(x) \,\de x+2\tau_k\|\dot\alpha_k^1\|_2^2\nonumber\\
&\quad+2\int_\Omega [\dot{\B}(\alpha_k^0(x))-\dot{\B}(\alpha_k^1(x))]\dot\alpha_k^1(x)p_k^0(x):p_k^0(x) \,\de x\nonumber\\
&\quad-2\int_\Omega \dot{\B}(\alpha_k^1(x))\dot\alpha_k^1(x)(p_k^1(x)-p_k^0(x)):(p_k^1(x)+p_k^0(x)) \,\de x\nonumber\\
&\le C_1\tau_k(\|\dot\alpha_k^1\|_2^2+\|\dot\alpha_k^1\|_4^2+\|\dot\alpha_k^1\|_2\|p_k^1-p_k^0\|_{H^1})\nonumber\\
&\le C_2\tau_k(\|\dot\alpha_k^1\|_4^2+\|\EE \dot w_k^1\|_2^2),\label{eq:ak1-est}
\end{align}
for two constants $C_1,C_2>0$, independent on $\varepsilon,k$. Similarly, for $j=2,\dots,k$ we use~\eqref{eq:st_daki} in $j$ and~\eqref{eq:st_aki} in $j-1$ with $\beta=\dot\alpha_k^j$, and we get
\begin{align}
\varepsilon(\dot\alpha_k^j,\dot\alpha_k^j-\dot\alpha_k^{j-1})_2+2\tau_k\|\dot\alpha_k^j\|_{H^1}^2&\le\partial_\alpha\mathcal E(\alpha_k^{j-1},e_k^{j-1},p_k^{j-1})[\dot\alpha_k^j]-\partial_\alpha\mathcal E(\alpha_k^j,e_k^j,p_k^j)[\dot\alpha_k^j]+2\tau_k\|\dot\alpha_k^j\|_{H^1}^2\nonumber\\
&=\int_\Omega [\dot{d}(\alpha_k^{j-1}(x))-\dot{d}(\alpha_k^j(x))]\dot\alpha_k^j(x) \,\de x+2\tau_k\|\dot\alpha_k^j\|_2^2\nonumber\\
&\quad+2\int_\Omega [\dot{\B}(\alpha_k^{j-1}(x))-\dot{\B}(\alpha_k^j(x))]\dot\alpha_k^j(x)p_k^{j-1}(x):p_k^{j-1}(x) \,\de x\nonumber\\
&\quad-2\int_\Omega \dot{\B}(\alpha_k^j(x))\dot\alpha_k^j(x)(p_k^j(x)-p_k^{j-1}(x)):(p_k^j(x)+p_k^{j-1}(x)) \,\de x\nonumber\\
&\le C_3\tau_k(\|\dot\alpha_k^j\|_2^2+\|\dot\alpha_k^j\|_4^2+\|\dot\alpha_k^j\|_2\|p_k^j-p_k^{j-1}\|_{H^1})\nonumber\\
&\le C_4\tau_k(\|\dot\alpha_k^j\|_4^2+\|\EE \dot w_k^j\|_2^2),\label{eq:dotak-est}
\end{align}
for two constants $C_3,C_4>0$, independent on $\varepsilon,k,j$. We now use the estimate
\begin{equation*}\varepsilon(\dot\alpha_k^j,\dot\alpha_k^j-\dot\alpha_k^{j-1})_2\ge \frac{\varepsilon}{2}\|\dot\alpha_k^j\|_2^2-\frac{\varepsilon}{2}\|\dot\alpha_k^{j-1}\|_2^2\quad\text{for all $i=2,\dots,k$},\end{equation*}
and we sum the two above inequalities over $j=1,\dots,i$ to derive
\begin{align*}
\varepsilon\|\dot\alpha_k^i\|_2^2+\sum_{j=1}^i\tau_k\|\dot\alpha_k^j\|_{H^1}^2\le C_5\left(1+\sum_{j=1}^i\tau_k\|\dot\alpha_k^j\|_4^2\right)\quad\text{ for all $i=1\,\dots,k$},
\end{align*}
for a constant $C_5>0$, independent on $\varepsilon,k,i$. By Lemma~\ref{lem:Cdelta} with $\theta=4$ for all $\lambda>0$ there exists a constant $C_\lambda>0$ such that
\begin{equation*}\|\alpha\|_4^2\le \lambda\|\nabla \alpha\|_2^2+C_\lambda\|\alpha\|_2^2\quad\text{for all $\alpha\in H^1(\Omega)$}.\end{equation*}
Hence, there exists a constant $C_6>0$, independent on $\varepsilon,k,i$, such that
\begin{align}\label{eq:unif-eps-est}
\varepsilon\|\dot\alpha_k^i\|_2^2+\sum_{j=1}^i\tau_k\|\dot\alpha_k^j\|_{H^1}^2\le C_6\left(1+\sum_{j=1}^i\tau_k\|\dot\alpha_k^j\|_2^2\right).
\end{align}
Therefore, by applying the discrete version of Gronwall Inequality of~\cite[Lemma~3.2.4]{AGS}, we derive that
$$\|\dot\alpha_k^i\|_2^2\le \frac{2C_6}{\varepsilon}{\rm e}^{\frac{2C_6}{\varepsilon}t_k^i}\quad\text{for all $k>\frac{2TC_6}{\varepsilon}$ and $i=1,\dots,k$}.$$
Finally, if we combine the above inequality with~\eqref{eq:unif-eps-est} we obtain~\eqref{eq:eps_est}.
\end{proof}

\begin{lemma}\label{lem:eps_est}
Assume~\eqref{eq:d}--\eqref{eq:B3},~\eqref{eq:K}, and~\eqref{eq:ic}. There exists a constant $C>0$, independent on $\varepsilon,k,i$, such that 
\begin{align}\label{eq:H1eps_est}
\sum_{i=1}^k\tau_k\|\dot\alpha_k^i\|_{H^1}\le C\quad\text{for all $k\in \N$}.
\end{align}
\end{lemma}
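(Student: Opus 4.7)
The plan is to mimic, step by step, the proof of Theorem~\ref{thm:cont_time} in the present discrete viscous setting, so as to obtain a ``linear'' estimate for the $H^1$-norm of the increment $\alpha_k^i - \alpha_k^{i-1}$ in terms of its $L^1$-norm plus work-type quantities. More precisely, I would test the variational inequality~\eqref{eq:st_aki} at step $i-1$ with $\beta = \alpha_k^i - \alpha_k^{i-1}\le 0$ to produce a lower bound on $\|\nabla(\alpha_k^i - \alpha_k^{i-1})\|_2^2$ (in the spirit of~\eqref{eq:lowbou1}); I would combine this with the convexity-type inequality Lemma~\ref{lem:ki_stab}(c) at $i-1$ tested with the translate $(u_k^i - (w_k^i - w_k^{i-1}), e_k^i - \EE(w_k^i - w_k^{i-1}), p_k^i)\in \mathcal A(w_k^{i-1})$ (analog of~\eqref{eq:lowbou2}); and finally with the discrete energy-dissipation inequality obtained by testing the minimality~\eqref{eq:min_ki} at step $i$ with $(\alpha_k^{i-1}, u_k^{i-1} + w_k^i - w_k^{i-1}, e_k^{i-1} + \EE(w_k^i - w_k^{i-1}), p_k^{i-1})$ (analog of~\eqref{eq:lowbou3}), which carries the extra viscous term $\tfrac{\varepsilon\tau_k}{2}\|\dot\alpha_k^i\|_2^2$.

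Summing these three estimates and exploiting, as in Theorem~\ref{thm:cont_time}, the Lipschitz bounds on $\dot d$, $\dot{\B}$, and the control~\eqref{eq:ekikiwki} on $\|p_k^i-p_k^{i-1}\|_{H^1}$ from Lemma~\ref{lem:ki_est}, one should reach, after Lemma~\ref{lem:Cdelta} and Young absorption, an inequality of the form
\begin{equation*}
\|\alpha_k^i - \alpha_k^{i-1}\|_{H^1}^2 + \|e_k^i - e_k^{i-1}\|_2^2 + \|p_k^i - p_k^{i-1}\|_{H^1}^2 \le C \|\alpha_k^i - \alpha_k^{i-1}\|_1^2 + C\|\EE(w_k^i-w_k^{i-1})\|_2^2 + R_i,
\end{equation*}
where $R_i\ge 0$ is a viscous remainder encoding the contribution of $\varepsilon\tau_k(\dot\alpha_k^{i-1},\dot\alpha_k^i)_2$ and $\varepsilon\tau_k\|\dot\alpha_k^i\|_2^2$. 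Here the crucial point, which allows one to get a squared $L^1$-norm (rather than a fractional power) on the right, is the pointwise bound $|\alpha_k^i - \alpha_k^{i-1}|\le 1$, forcing $\|\alpha_k^i-\alpha_k^{i-1}\|_r^r \le \|\alpha_k^i-\alpha_k^{i-1}\|_1$ for every $r\in[1,\infty)$. Taking square roots via $\sqrt{a+b+c}\le\sqrt a+\sqrt b+\sqrt c$ and summing in $i$ would then yield
\begin{equation*}
\sum_{i=1}^k \|\alpha_k^i - \alpha_k^{i-1}\|_{H^1} \le C\sum_{i=1}^k \|\alpha_k^i-\alpha_k^{i-1}\|_1 + C\sum_{i=1}^k\|\EE(w_k^i-w_k^{i-1})\|_2 + \sum_{i=1}^k\sqrt{R_i}.
\end{equation*}
The first sum is $\le C|\Omega|$ by monotonicity of $\alpha$ and the bound $\alpha\in[0,1]$, and the second is $\le \int_0^T\|\EE\dot w(r)\|_2\,\de r < \infty$ by the regularity~\eqref{eq:w} and a telescoping argument.

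The main obstacle will be controlling $\sum_i\sqrt{R_i}$ uniformly in $\varepsilon$ and $k$: the naive Cauchy–Schwarz estimate $\sum_i\sqrt{R_i}\le\sqrt{k}\,\sqrt{\sum_i R_i}$ diverges, since the $L^2$-in-time bound $\varepsilon\sum_i\tau_k\|\dot\alpha_k^i\|_2^2\le C$ from~\eqref{eq:unif-est-eps} only gives $\sum_i R_i\le C$. To bypass this, I would try to extract a discrete telescoping structure from $\varepsilon\tau_k(\dot\alpha_k^j,\dot\alpha_k^j-\dot\alpha_k^{j-1})_2 \ge \tfrac{\varepsilon\tau_k}{2}(\|\dot\alpha_k^j\|_2^2-\|\dot\alpha_k^{j-1}\|_2^2)$ directly at the level of the squared inequality and absorb it by Young's inequality with parameters, before the square root is taken, so that the remainder which survives on the right has genuinely summable structure (comparable with the already controlled quantities $\sum_i\|\alpha_k^i-\alpha_k^{i-1}\|_1^2$ and $\sum_i\|\EE(w_k^i-w_k^{i-1})\|_2^2$). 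This is precisely the point at which the arguments of~\cite{KRZ, CrLa} enter, and reproducing them in our strain-gradient setting — where the presence of $\|\nabla p\|_2^2$ in the energy and the term $\tilde Q(\alpha,p)$ couple damage and plasticity in a delicate way — is the technical core of the proof.
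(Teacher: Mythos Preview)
Your proposal identifies the obstacle correctly but does not arrive at the starting inequality that the KRZ weighted Gronwall actually requires, and the route you sketch cannot produce it. By mimicking Theorem~\ref{thm:cont_time} --- testing~\eqref{eq:st_aki} at step $i-1$ and combining with the minimality~\eqref{eq:min_ki} at step $i$ --- the viscous contribution that appears on the right is $\varepsilon\tau_k(\dot\alpha_k^{i-1},\dot\alpha_k^i)_2 - \tfrac{\varepsilon\tau_k}{2}\|\dot\alpha_k^i\|_2^2$, while the coercive term on the left is $\|\nabla(\alpha_k^i-\alpha_k^{i-1})\|_2^2=\tau_k^2\|\nabla\dot\alpha_k^i\|_2^2$. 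This is \emph{not} of the form $\varepsilon(\dot\alpha_k^j,\dot\alpha_k^j-\dot\alpha_k^{j-1})_2$ on the left together with a $\tau_k$-weighted (rather than $\tau_k^2$-weighted) $H^1$ term, which is precisely the structure $2a_j(a_j-a_{j-1})+2\zeta a_j^2+b_j^2\le c_j^2+2a_jd_j$ needed by \cite[Lemma~4.1]{KRZ}. No Young-type absorption repairs this: after square roots you are left with $\sum_i\sqrt{R_i}\lesssim\sqrt{\varepsilon\tau_k}\sum_i\|\dot\alpha_k^{i-1}\|_2$, and the only available $L^2$-in-time bound $\varepsilon\sum_i\tau_k\|\dot\alpha_k^i\|_2^2\le C$ yields at best $\sum_i\sqrt{R_i}\lesssim\sqrt{k}$, which diverges. (As a side remark, the pointwise bound $|\alpha_k^i-\alpha_k^{i-1}|\le1$ only gives $\|\cdot\|_4^2\le\|\cdot\|_1^{1/2}$, a fractional power; the squared-$L^1$ control genuinely comes from Lemma~\ref{lem:Cdelta}.)

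The paper proceeds along a different axis: the starting point is not the energy-balance route of Theorem~\ref{thm:cont_time}, but the combination of the discrete Kuhn--Tucker \emph{equality}~\eqref{eq:st_daki} at step $j$ with the inequality~\eqref{eq:st_aki} at step $j-1$, both tested with $\dot\alpha_k^j$; these are exactly~\eqref{eq:ak1-est}--\eqref{eq:dotak-est}, already derived in the proof of Lemma~\ref{lem:w11-est}. Subtracting yields directly
\[
\varepsilon(\dot\alpha_k^j,\dot\alpha_k^j-\dot\alpha_k^{j-1})_2+2\tau_k\|\dot\alpha_k^j\|_{H^1}^2\le C\tau_k\bigl(\|\dot\alpha_k^j\|_4^2+\|\EE\dot w_k^j\|_2^2\bigr),
\]
after which Lemma~\ref{lem:Cdelta} (with $\|\dot\alpha_k^j\|_1^2\le\|\dot\alpha_k^j\|_1\|\dot\alpha_k^j\|_2$) and the lower bound $\varepsilon(\dot\alpha_k^j,\dot\alpha_k^j-\dot\alpha_k^{j-1})_2\ge\varepsilon\|\dot\alpha_k^j\|_2(\|\dot\alpha_k^j\|_2-\|\dot\alpha_k^{j-1}\|_2)$ put the inequality into KRZ form. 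The equality~\eqref{eq:st_daki} is the missing ingredient in your plan: it is what places $\varepsilon\|\dot\alpha_k^j\|_2^2$ on the \emph{left} with the correct sign and produces the first-order-in-$\tau_k$ control of $\|\dot\alpha_k^j\|_{H^1}^2$.
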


\begin{proof}
We fix $k\in\N$. If we set 
$\dot \alpha_k^0\coloneqq0$, by~\eqref{eq:ak1-est},~\eqref{eq:dotak-est}, and the inequality
\begin{equation*}
\varepsilon(\dot\alpha_k^j,\dot\alpha_k^j-\dot\alpha_k^{j-1})_2\ge \varepsilon\|\dot\alpha_k^j\|_2(\|\dot\alpha_k^j\|_2-\|\dot\alpha_k^{j-1}\|_2)\quad\text{for all $j=1,\dots,k$},
\end{equation*}
we can find a constant $C_1>0$, independent on $\varepsilon,k,j$, such that 
\begin{align*}
\varepsilon\|\dot\alpha_k^j\|_2(\|\dot\alpha_k^j\|_2-\|\dot\alpha_k^{j-1}\|_2)+2\tau_k\|\dot\alpha_k^j\|_{H^1}^2\le C_1\tau_k(\|\dot\alpha_k^j\|_4^2+\|\EE \dot w_k^j\|_2^2)\quad\text{for all $j=1,\dots,k$}.
\end{align*}
By Lemma~\ref{lem:Cdelta} with $\theta=4$, for all $\lambda>0$ there exist $C_\lambda,\tilde C_\lambda>0$ such that
\begin{equation*}
\|\alpha\|_4^2\le \lambda\|\nabla \alpha\|_2^2+ C_\lambda\|\alpha\|_1^2\le \lambda\|\nabla \alpha\|_2^2+\tilde C_\lambda\|\alpha\|_1\|\alpha\|_2\ \quad\text{for all $\alpha\in H^1(\Omega)$}.
\end{equation*}
Hence, we deduce that
\begin{align*}
\varepsilon\|\dot\alpha_k^j\|_2(\|\dot\alpha_k^j\|_2-\|\dot\alpha_k^{j-1}\|_2)+\tau_k\|\dot\alpha_k^j\|_{H^1}^2\le C_2\tau_k(\|\dot\alpha_k^j\|_1\|\dot\alpha_k^j\|_2+\|\EE \dot w_k^j\|_2^2)\quad\text{for all $j=1,\dots,k$},
\end{align*}
for a constant $C_2>0$, independent on $\varepsilon,k,j$. By multiplying the above inequality by $\frac{2}{\varepsilon}$ and taking into account that $\|\dot\alpha_k^j\|_{H^1}\ge \|\dot\alpha_k^j\|_2$, we derive that
\begin{align*}
2\|\dot\alpha_k^j\|_2(\|\dot\alpha_k^j\|_2-\|\dot\alpha_k^{j-1}\|_2)+\frac{\tau_k}{\varepsilon}\|\dot\alpha_k^j\|_2^2+\frac{\tau_k}{\varepsilon}\|\dot\alpha_k^j\|_{H^1}^2\le \frac{2C_2\tau_k}{\varepsilon}(\|\dot\alpha_k^j\|_1\|\dot\alpha_k^j\|_2+\|\EE \dot w_k^j\|_2^2)\quad\text{for all $j=1,\dots,k$}.
\end{align*}
From now on, the proof follows closely the proof of~\cite[Proposition 3.8]{CrLa}, employing in particular the discrete Gronwall Inequality with weights from~\cite[Lemma 4.1]{KRZ}. We detail all the passages below, for reader’s convenience.

Let us set $a_0\coloneqq\|\dot\alpha_k^0\|_2=0$ and for $j=1,\dots,k$
\begin{align*}
\zeta\coloneqq \frac{\tau_k}{2\varepsilon},\qquad a_j\coloneqq \|\dot\alpha_k^j\|_2,\qquad b_j\coloneqq \sqrt{\frac{\tau_k}{\varepsilon}}\|\dot\alpha_k^j\|_{H^1},\qquad c_j\coloneqq \sqrt{\frac{2C_2\tau_k}{\varepsilon}}\|\EE \dot w_k^j\|_2,\qquad d_j\coloneqq \frac{C_2\tau_k}{\varepsilon}\|\dot\alpha_k^j\|_1.
\end{align*}
Therefore, we can rephrase the above inequality as
\begin{align*}
2a_j(a_j-a_{j-1})+2\zeta a_j^2+b_j^2\le c_j^2+2a_jd_j\quad\text{for all $j=1,\dots,k$}.
\end{align*}
Hence, we can apply~\cite[Lemma 4.1]{KRZ} and we use that $a_0=0$ to deduce
\begin{align*}
\sum_{h=1}^j(1+\zeta)^{2(h-j)-1}b_h^2\le 2\sum_{h=1}^j(1+\zeta)^{2(h-j)-1}c_h^2+4\left(\sum_{h=1}^j(1+\zeta)^{h-j-1}d_h\right)^2\quad\text{for all $j=1,\dots,k$}.
\end{align*}
In particular, for all $j=1,\dots,k$ we derive
\begin{align*}
&2\zeta\sum_{h=1}^j(1+\zeta)^{2(h-j)-1}\|\dot\alpha_k^h\|_{H^1}^2\le8C_2\zeta\sum_{h=1}^j(1+\zeta)^{2(h-j)-1}\|\EE \dot w_k^h\|_2^2+\left(4C_2\zeta\sum_{h=1}^j(1+\zeta)^{h-j-1}\|\dot\alpha_k^h\|_1\right)^2.
\end{align*}
We now use the inequality
\begin{equation*}a^2+b^2\le (a+b)^2\le (1+a^2+b)^2\quad\text{for all $a,b\ge 0$},
\end{equation*}
to estimate from above the right-hand side by
\begin{align*}
\left(1+8C_2\zeta\sum_{h=1}^j(1+\zeta)^{2(h-j)-1}\|\EE \dot w_k^h\|_2^2+4C_2\zeta\sum_{h=1}^j(1+\zeta)^{h-j-1}\|\dot\alpha_k^h\|_1\right)^2.
\end{align*}
Moreover, for all $j=1,\dots,k$ we have
\begin{align*}
\zeta\sum_{h=1}^j(1+\zeta)^{2(h-j)-1}\|\dot\alpha_k^h\|_{H^1}&\le \left(\zeta\sum_{h=1}^j(1+\zeta)^{2(h-j)-1}\right)^{\frac{1}{2}}\left(\zeta\sum_{h=1}^j(1+\zeta)^{2(h-j)-1}\|\dot\alpha_k^h\|_{H^1}^2\right)^{\frac{1}{2}}\\
&\le \left(\zeta\sum_{h=1}^j(1+\zeta)^{2(h-j)-1}\|\dot\alpha_k^h\|_{H^1}^2\right)^{\frac{1}{2}},
\end{align*}
since
\begin{align}
\zeta\sum_{h=1}^j(1+\zeta)^{2(h-j)-1}=\frac{\zeta}{1+\zeta}\sum_{h=0}^{j-1}(1+\zeta)^{-2h}= \frac{\zeta(1+\zeta)(1-(1+\zeta)^{-2j})}{(1+\zeta)^2-1}\le\frac{(1+\zeta)}{2+\zeta}\le 1.\label{eq:sum-zeta}
\end{align}
Therefore, by combining the previous inequalities we deduce that for $j=1,\dots,k$
\begin{align}
&\zeta\sum_{h=1}^j(1+\zeta)^{2(h-j)-1}\|\dot\alpha_k^h\|_{H^1}\le C_3\left(1+\zeta\sum_{h=1}^j(1+\zeta)^{2(h-j)-1}\|\EE \dot w_k^h\|_2^2+\zeta\sum_{h=1}^j(1+\zeta)^{h-j-1}\|\dot\alpha_k^h\|_1\right)\label{eq:aabb3}
\end{align}
for a constant $C_3>0$ independent on $\varepsilon,k,j$. We now multiply both sides by $\tau_k$ and we sum over $j=1,\dots,k$, to obtain
\begin{align}
&\zeta\sum_{j=1}^k\sum_{h=1}^j\tau_k(1+\zeta)^{2(h-j)-1}\|\dot\alpha_k^h\|_{H^1}\nonumber\\
&\le C_3\left(T+\zeta\sum_{j=1}^k\sum_{h=1}^j\tau_k(1+\zeta)^{2(h-j)-1}\|\EE \dot w_k^h\|_2^2+\zeta\sum_{j=1}^k\sum_{h=1}^j\tau_k(1+\zeta)^{h-j-1}\|\dot\alpha_k^h\|_1\right).\label{eq:aabb1}
\end{align}
If we change the order of the sum and we argue as in~\eqref{eq:sum-zeta}, we derive
\begin{align}
&\zeta\sum_{j=1}^k\sum_{h=1}^j\tau_k(1+\zeta)^{2(h-j)-1}\|\EE \dot w_k^h\|_2^2\le\frac{1+\zeta}{2+\zeta}\sum_{h=1}^k\tau_k\|\EE \dot w_k^h\|_2^2\le\sum_{h=1}^k\tau_k\|\EE \dot w_k^h\|_2^2,\\
&\zeta\sum_{j=1}^k\sum_{h=1}^j\tau_k(1+\zeta)^{h-j-1}\|\dot\alpha_k^h\|_1\le\frac{1}{1+\zeta}\sum_{h=1}^k\tau_k\|\dot\alpha_k^h\|_1\le \sum_{h=1}^k\tau_k\|\dot\alpha_k^h\|_1,\\
&\zeta\sum_{j=1}^k\sum_{h=1}^j\tau_k(1+\zeta)^{2(h-j)-1}\|\dot\alpha_k^h\|_{H^1}=\frac{1+\zeta}{2+\zeta}\sum_{h=1}^k\tau_k(1-(1+\zeta)^{2(h-k-1)})\|\dot\alpha_k^h\|_{H^1}.\label{eq:aabb2}
\end{align}
Hence, by combining~\eqref{eq:aabb1}--\eqref{eq:aabb2} we derive
\begin{align*}
\frac{1}{2}\sum_{h=1}^k\tau_k\|\dot\alpha_k^h\|_{H^1}&\le C_3\left(T+\sum_{h=1}^k\tau_k\|\EE \dot w_k^h\|_2^2+\sum_{h=1}^k\tau_k\|\dot\alpha_k^h\|_1\right)+\sum_{h=1}^k\tau_k(1+\zeta)^{2(h-k-1)}\|\dot\alpha_k^h\|_{H^1}.
\end{align*}
Since $\tau_k=2\varepsilon\zeta$, by~\eqref{eq:aabb3} we have
\begin{align*}
\sum_{h=1}^k\tau_k(1+\zeta)^{2(h-k-1)}\|\dot\alpha_k^h\|_{H^1}&=\frac{2\varepsilon}{1+\zeta}\zeta\sum_{h=1}^k(1+\zeta)^{2(h-k)-1}\|\dot\alpha_k^h\|_{H^1}\\
&\le C_3\left(2\varepsilon+\sum_{h=1}^k\tau_k\|\EE \dot w_k^h\|_2^2+\sum_{h=1}^k\tau_k\|\dot\alpha_k^h\|_1\right),
\end{align*}
which gives
\begin{align*}
\sum_{h=1}^k\tau_k\|\dot\alpha_k^h\|_{H^1}&\le C_4\left(1+\sum_{h=1}^k\tau_k\|\EE \dot w_k^h\|_2^2+\sum_{h=1}^k\tau_k\|\dot\alpha_k^h\|_1\right)
\end{align*}
for a constant $C_4>0$ independent on $\varepsilon,i,k$, since $\varepsilon\in(0,1)$. Finally, we observe that 
$$\sum_{h=1}^k\tau_k\|\dot\alpha_k^h\|_1=\sum_{h=1}^k\int_\Omega(\alpha_k^{h-1}(x)-\alpha_k^h(x))\,\de x=\int_\Omega (\alpha_k^0(x)-\alpha_k^k(x))\,\de x\le \mathcal L^n(\Omega).$$
Hence, we obtain~\eqref{eq:H1eps_est}.
\end{proof}

As done in the previous section in~\eqref{eq:overline_ak} and~\eqref{eq:overline_tauk}, we introduce the {\it piecewise constant interpolants} $\overline \alpha_k\colon [0,T]\to H^1(\Omega;[0,1])$, $\overline u_k\colon [0,T]\to H^1(\Omega;\R^n)$, $\overline e_k\colon [0,T]\to L^2(\Omega;\M)$, $\overline p_k\colon [0,T]\to H^1(\Omega;\M)$, $\overline w_k\to H^1(\Omega;\R^n)$, and $\overline \tau_k\colon [0,T]\to [0,T]$. Moreover, we define the {\it piecewise affine interpolant}  $\alpha_k\in H^1((0,T);H^1(\Omega;[0,1]))$ as
\begin{align*}
\alpha_k(t)\coloneqq \alpha_k^{i-1}+(t-t_k^{i-1})\dot \alpha_k^i\quad\text{for $t\in [t_k^{i-1},t_k^i]$, $i=1,\dots,k$},
\end{align*}
and similarly $u_k\in H^1((0,T);H^1(\Omega;\R^n))$, $e_k\in H^1((0,T);L^2(\Omega;\M))$, and $p_k\in H^1((0,T);H^1(\Omega;\M))$. 

By using the piecewise affine interpolants, we can rephrase Lemmas~\ref{lem:w11-est} and~\ref{lem:eps_est} in the following way: there exists a constant $C$, independent on $k$ and $\varepsilon$, such that
\begin{align}
&\int_0^T\left(\|\dot \alpha_k(r)\|_{H^1}^2+\|\dot u_k(r)\|_{H^1}^2+\|\dot e_k(r)\|_2^2+\|\dot p_k(r)\|_ {H^1}^2\right) \,\de r\le \frac{C}{\varepsilon}e^\frac{C}{\varepsilon}& &\text{for all $k\ge \frac{C}{\varepsilon}$},\label{eq:k_unif_bound}\\
&\int_0^T\left(\|\dot \alpha_k(r)\|_{H^1}+\|\dot u_k(r)\|_{H^1}+\|\dot e_k(r)\|_2+\|\dot p_k(r)\|_ {H^1}\right) \,\de r\le C& &\text{for all $k\in\N$}. \label{eq:k_unif_bound2}
\end{align}


\subsection{Passage to the limit}

In this section we fix $\varepsilon\in (0,1)$ and we shall send $k\to\infty$ to find an $\varepsilon$-approximate viscous evolution according to the definition below.

\begin{definition}\label{def:ep_QSE}
Assume~\eqref{eq:d}--\eqref{eq:B3} and~\eqref{eq:K}. Let $\varepsilon\in(0,1)$ be fixed. A quadruple $(\alpha_\varepsilon,u_\varepsilon,e_\varepsilon,p_\varepsilon)$ from $[0,T]$ into $H^1(\Omega;[0,1])\times\mathcal A$ is an {\it $\varepsilon$-approximate viscous evolution} with datum $w$ if
\begin{align}
&\alpha_\varepsilon\in H^1(0,T;H^1(\Omega;[0,1])),& & u_\varepsilon \in H^1(0,T;H^1(\Omega;\R^n)),\label{eq:ve_2eg1}\\
&e_\varepsilon \in H^1(0,T;L^2(\Omega;\M)),& & p_\varepsilon \in H^1(0,T;H^1(\Omega;\M))\label{eq:ve_2eg2},
\end{align}
and the following conditions are satisfied:
\begin{itemize}
\item[$(ev0)_\varepsilon$] {\it irreversibility}: for a.e.\ $x\in\Omega$ the map
\begin{equation*}
t\mapsto \alpha_\varepsilon(t,x)\quad\text{is non increasing on $[0,T]$};
\end{equation*}
\item[$(ev1)_\varepsilon$] {\it kinematic condition and equilibrium}: for all $t\in[0,T]$ 
\begin{equation*} 
(u_\varepsilon(t),e_\varepsilon(t),p_\varepsilon(t))\in \mathcal A(w(t))\quad\text{and}\quad\div \C e_\varepsilon(t)=0\quad\text{in $\mathcal D'(\Omega;\R^n)$};
\end{equation*}
\item[$(ev2)_\varepsilon$] {\it stress constraint}: for all $t\in[0,T]$
\begin{equation*}
\mathcal H(q)\ge (\C e_\varepsilon(t),q)_2-2(\B(\alpha_\varepsilon(t))p_\varepsilon(t),q)_2-2(\nabla p_\varepsilon(t),\nabla q)_2\quad\text{for all $q\in H^1(\Omega;\M)$};
\end{equation*}
\item[$(ev3)_\varepsilon$] {\it Kuhn–Tucker Inequality}: for a.e.\ $t\in[0,T]$
\begin{equation*}\partial_\alpha\mathcal E(\alpha_\varepsilon(t),e_\varepsilon(t),p_\varepsilon(t))[\beta]+\varepsilon(\dot\alpha_\varepsilon(t),\beta)_2\ge0\quad\text{for all $\beta\in H^1(\Omega)$ with $\beta \le 0$}; \end{equation*}
\item[$(ev4)_\varepsilon$] {\it energy balance}: for all $t\in[0,T]$
\begin{equation*}\mathcal E(\alpha_\varepsilon(t),e_\varepsilon(t),p_\varepsilon(t))+\int_0^t\mathcal H(\dot p_\varepsilon(r)) \,\de r+\varepsilon\int_0^t\|\dot\alpha_\varepsilon(r)\|_2^2 \,\de r=\mathcal E(\alpha_\varepsilon(0),e_\varepsilon(0),p_\varepsilon(0))+\int_0^t(\C e_\varepsilon(r),\EE \dot w(r))_2 \,\de r.\end{equation*}
\end{itemize}
\end{definition}

Condition~$(ev2)_\epsilon$ is a generalization of the standard stress constraint in plasticity. Indeed, we have the following result.

\begin{lemma}
Assume~\eqref{eq:d}--\eqref{eq:B3} and~\eqref{eq:K}. Let $(\alpha,u,e,p)\in H^1(\Omega;[0,1])\times\mathcal A$ be satisfying
\begin{equation}\label{eq:qstress}
\mathcal H(q)\ge (\C e,q)_2-2(\B(\alpha)p,q)_2-2(\nabla p,\nabla q)_2\quad\text{for all $q\in H^1(\Omega;\M)$}.
\end{equation}
If $p\in H^2(\Omega;\M)$ and $K$ is compact, then $(\alpha,u,e,p)\in H^1(\Omega;[0,1])\times\mathcal A$ satisfies
\begin{align*}
\C e(x)-2\B(\alpha(x))p(x)+2 \Delta p(x)\in K\quad\text{for a.e.\ $x\in\Omega$}.
\end{align*}
\end{lemma}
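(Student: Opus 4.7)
The plan is to recognize~\eqref{eq:qstress} as a weak form of the pointwise stress constraint, and to recover the latter by integration by parts followed by a localization argument.

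First I would integrate by parts the gradient-gradient term. Since $p\in H^2(\Omega;\M)$, for every test function $q\in C_c^\infty(\Omega;\M)\subset H^1(\Omega;\M)$ there are no boundary contributions and
$$-2(\nabla p,\nabla q)_2=2(\Delta p,q)_2.$$
Setting $\sigma\coloneqq \C e-2\B(\alpha)p+2\Delta p\in L^2(\Omega;\M)$, the hypothesis~\eqref{eq:qstress} specialized to $q\in C_c^\infty(\Omega;\M)$ becomes
$$\int_\Omega H(q(x))\dx\ge \int_\Omega \sigma(x):q(x)\dx.$$
Note that compactness of $K$ makes $H\colon\M\to[0,\infty)$ finite everywhere and Lipschitz, so all quantities are well-defined.

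Next I would localize. For a fixed $\xi\in\M$ and an arbitrary nonnegative $\varphi\in C_c^\infty(\Omega)$, test with $q=\xi\varphi$. By the positive $1$-homogeneity of $H$ the inequality becomes
$$H(\xi)\int_\Omega\varphi(x)\dx\ge \int_\Omega (\sigma(x):\xi)\,\varphi(x)\dx.$$
Choosing $\varphi$ to be a nonnegative mollifier concentrated at a point $x_0\in\Omega$ and letting the scale shrink, at every Lebesgue point of $x\mapsto \sigma(x):\xi$ we get $H(\xi)\ge \sigma(x_0):\xi$. To obtain a single null set which works for all $\xi\in\M$ simultaneously, I would pick a countable dense subset $\{\xi_j\}_{j\in\N}\subset\M$: on a set of full measure one has $\sigma(x_0):\xi_j\le H(\xi_j)$ for every $j$, and the Lipschitz continuity of $H$ propagates the inequality to every $\xi\in\M$ by density. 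Since $K$ is closed and convex, the standard characterization $K=\{\tau\in\M : \tau:\xi\le H(\xi)\ \text{for every}\ \xi\in\M\}$ yields $\sigma(x_0)\in K$ for a.e.\ $x_0\in\Omega$.

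I do not expect serious obstacles: the integration by parts is clean thanks to $p\in H^2$, and the localization is standard. The only delicate point is the use of compactness of $K$ to guarantee that $H$ is real-valued and Lipschitz, which is exactly what enables the localization via mollifiers with $\xi$ ranging over all of $\M$; without this hypothesis one could only localize on the finiteness domain of $H$, and the conclusion would have to be reformulated in the dual sense.
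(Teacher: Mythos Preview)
Your proof is correct and follows essentially the same route as the paper: integrate by parts using $p\in H^2$ to reduce~\eqref{eq:qstress} to $\mathcal H(q)\ge(\sigma,q)_2$ with $\sigma=\C e-2\B(\alpha)p+2\Delta p$, then localize to obtain the pointwise inequality $\sigma(x):\xi\le H(\xi)$ and conclude via $\partial H(0)=K$. The only cosmetic difference is that the paper first extends the inequality from $C_c^\infty$ to all of $L^2$ by density (using that $\mathcal H$ is $L^1$-Lipschitz when $K$ is bounded) and then tests with $q=\chi_B\xi$, whereas you stay with smooth tests $q=\xi\varphi$ and pass to Lebesgue points; both are standard and equivalent here.
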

\begin{proof}
Let $q\in C_c^\infty(\Omega;\M)$. By integrating by parts in~\eqref{eq:qstress} we get 
$$\mathcal H(q)\ge (\C e-2\B(\alpha) p+2 \Delta p,q)_2\quad\text{for all $q\in C_c^\infty(\Omega;\M)$}.$$
Since $K$ is bounded, there exists $R>0$ such that $K\subset B_R(0)$. Then 
$$\mathcal H(q)\le R\|q\|_1\quad\text{for all $q\in L^2(\Omega;\M)$},$$
which implies that $\mathcal H$ is continuous on $L^2(\Omega;\M)$. Therefore, by a density argument we derive
$$\mathcal H(q)\ge (\C e-2\B(\alpha) p+2 \Delta p,q)_2\quad\text{for all $q\in L^2(\Omega;\M)$}.$$

Let us fix $\xi\in\M$. Since for all measurable set $B\subset \Omega$ we can take $q\coloneqq \chi_B\xi\in L^2(\Omega;\M)$, we derive
$$H(\xi)\ge [\C e(x)-2\B(\alpha(x)) p(x)+2 \Delta p(x)]:\xi\quad\text{for all $\xi\in \M$ and for a.e.\ $x\in\Omega$}.$$
Therefore, $\C e(x)-2\B(\alpha(x)) p(x)+2 \Delta p(x)\in \partial H(0)=K$ for a.e.\ $x\in\Omega$.
\end{proof}

The following proposition is instrumental to prove existence of $\varepsilon$-approximate viscous evolutions.

\begin{proposition}\label{prop:2210231935}
Assume~\eqref{eq:d}--\eqref{eq:B3} and~\eqref{eq:K}. Let $\varepsilon\in(0,1)$ be fixed and let $(\alpha_\varepsilon,u_\varepsilon,e_\varepsilon,p_\varepsilon)$ be satisfying~\eqref{eq:ve_2eg1}--\eqref{eq:ve_2eg2},~$(ev0)_\varepsilon$,~$(ev1)_\varepsilon$,~$(ev2)_\varepsilon$, and~$(ev3)_\varepsilon$. Then $(\alpha_\varepsilon,u_\varepsilon,e_\varepsilon,p_\varepsilon)$ is an $\varepsilon$-approximate viscous evolution, i.e., it satisfies the energy balance~$(ev4)_\varepsilon$, if and only if any of the conditions
below holds true:
\begin{itemize}
 \item[$(ev4')_\varepsilon$] for a.e.\ $t\in[0,T]$ the following hold:\\
Kuhn–Tucker equality: 
\begin{equation*}
\partial_\alpha\mathcal E(\alpha_\varepsilon(t),e_\varepsilon(t),p_\varepsilon(t))[\dot\alpha_\varepsilon(t)]+\varepsilon\|\dot \alpha_\varepsilon(t)\|_2^2= 0;
\end{equation*}
Hill’s maximum plastic work principle:
\begin{equation*}
\mathcal H(\dot p_\varepsilon(t))=(\C e_\varepsilon(t),\dot p_\varepsilon(t))_2-2(\B(\alpha_\varepsilon(t))p_\varepsilon(t),\dot p_\varepsilon(t))_2-2(\nabla p_\varepsilon(t),\nabla \dot p_\varepsilon(t))_2;
\end{equation*}
 \item[$(ev4'')_\varepsilon$] energy inequality: for all $t\in[0,T]$
 \begin{align*}
 \mathcal E(\alpha_\varepsilon(t),e_\varepsilon(t),p_\varepsilon(t))&+\int_0^t\mathcal H(\dot p_\varepsilon(r)) \,\de r+\varepsilon\int_0^t\|\dot\alpha_\varepsilon(r)\|_2^2 \,\de r\le\mathcal E(\alpha_\varepsilon(0),e_\varepsilon(0),p_\varepsilon(0))+\int_0^t(\C e_\varepsilon(r),\EE \dot w(r))_2 \,\de r.
 \end{align*}
\end{itemize}
\end{proposition}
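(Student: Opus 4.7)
The plan is to reduce all three reformulations to a single chain-rule identity and then exploit the sign of two pointwise error terms coming from $(ev2)_\varepsilon$ and $(ev3)_\varepsilon$. First, using the $H^1(0,T)$ regularity~\eqref{eq:ve_2eg1}--\eqref{eq:ve_2eg2} of $\alpha_\varepsilon$, $e_\varepsilon$, $p_\varepsilon$ together with the $C^{1,1}$ regularity of $d$ and $\B$, I would check that $t\mapsto\mathcal E(\alpha_\varepsilon(t),e_\varepsilon(t),p_\varepsilon(t))$ is absolutely continuous on $[0,T]$ with
\begin{equation*}
\frac{d}{dt}\mathcal E(\alpha_\varepsilon,e_\varepsilon,p_\varepsilon)=\partial_\alpha\mathcal E(\alpha_\varepsilon,e_\varepsilon,p_\varepsilon)[\dot\alpha_\varepsilon]+(\C e_\varepsilon,\dot e_\varepsilon)_2+2(\B(\alpha_\varepsilon)p_\varepsilon,\dot p_\varepsilon)_2+2(\nabla p_\varepsilon,\nabla\dot p_\varepsilon)_2
\end{equation*}
for a.e.\ $t\in[0,T]$. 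This is a term-by-term fundamental theorem of calculus for the five summands of $\mathcal E$; the only slightly delicate one is $\tilde Q$, whose cross term $\int_\Omega\dot{\B}(\alpha_\varepsilon)\dot\alpha_\varepsilon p_\varepsilon\colon p_\varepsilon\,\de x$ is made integrable in time thanks to the third inequality in Remark~\ref{rem:emb}, combined with $\dot\alpha_\varepsilon\in L^2(0,T;H^1(\Omega))$ and the uniform $H^1(\Omega)$-bound on $p_\varepsilon$.

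Next, differentiating the kinematical constraint in $(ev1)_\varepsilon$ gives $\dot e_\varepsilon(t)+\dot p_\varepsilon(t)=\EE\dot u_\varepsilon(t)$ with $\dot u_\varepsilon(t)-\dot w(t)\in H^1_0(\Omega;\R^n)$. Combining this with $\div\C e_\varepsilon(t)=0$ and the integration-by-parts formula~\eqref{eq:int_for} yields
\begin{equation*}
(\C e_\varepsilon(t),\dot e_\varepsilon(t))_2=(\C e_\varepsilon(t),\EE\dot w(t))_2-(\C e_\varepsilon(t),\dot p_\varepsilon(t))_2.
\end{equation*}
Plugging this into the chain rule and rearranging produces the key pointwise identity
\begin{align*}
&\frac{d}{dt}\mathcal E(\alpha_\varepsilon,e_\varepsilon,p_\varepsilon)+\mathcal H(\dot p_\varepsilon)+\varepsilon\|\dot\alpha_\varepsilon\|_2^2-(\C e_\varepsilon,\EE\dot w)_2\\
&=\bigl[\mathcal H(\dot p_\varepsilon)-(\C e_\varepsilon,\dot p_\varepsilon)_2+2(\B(\alpha_\varepsilon)p_\varepsilon,\dot p_\varepsilon)_2+2(\nabla p_\varepsilon,\nabla\dot p_\varepsilon)_2\bigr]+\bigl[\partial_\alpha\mathcal E(\alpha_\varepsilon,e_\varepsilon,p_\varepsilon)[\dot\alpha_\varepsilon]+\varepsilon\|\dot\alpha_\varepsilon\|_2^2\bigr]
\end{align*}
for a.e.\ $t\in[0,T]$. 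By $(ev2)_\varepsilon$ tested with $q=\dot p_\varepsilon(t)\in H^1(\Omega;\M)$ the first bracket is nonnegative a.e.; by $(ev3)_\varepsilon$ tested with $\beta=\dot\alpha_\varepsilon(t)$, which is nonpositive a.e.\ thanks to $(ev0)_\varepsilon$, the second bracket is nonnegative a.e.

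With this identity in hand the three equivalences fall out easily. Integrating over $[0,t]$ gives \emph{unconditionally} the $\ge$ direction of the energy balance, hence $(ev4'')_\varepsilon\Leftrightarrow(ev4)_\varepsilon$. Moreover $(ev4)_\varepsilon$ holds if and only if the integral on $[0,T]$ of the two nonnegative brackets vanishes, which, by their pointwise sign, is equivalent to the vanishing of each bracket for a.e.\ $t$, that is exactly to the two pointwise equalities of $(ev4')_\varepsilon$. The main (and essentially only) nontrivial step is the chain rule justification: the Sobolev embedding $H^1(\Omega)\hookrightarrow L^4(\Omega)$ recorded in Remark~\ref{rem:emb} is what makes the $\dot{\B}$-cross term time-integrable and legitimizes the differentiation of $\tilde Q(\alpha_\varepsilon,p_\varepsilon)$.
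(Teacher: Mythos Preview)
Your proof is correct and follows essentially the same route as the paper: both arguments hinge on the chain-rule formula for $t\mapsto\mathcal E(\alpha_\varepsilon,e_\varepsilon,p_\varepsilon)$, the rewriting of $(\C e_\varepsilon,\dot e_\varepsilon)_2$ via $\div\C e_\varepsilon=0$ and~\eqref{eq:int_for}, and the observation that the two residual terms are nonnegative by $(ev2)_\varepsilon$ with $q=\dot p_\varepsilon$ and $(ev3)_\varepsilon$ with $\beta=\dot\alpha_\varepsilon\le 0$. Your presentation via the single ``key pointwise identity'' with two explicit brackets is slightly more streamlined than the paper's, but the content is identical.
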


\begin{proof}
{\bf~$(ev4)_\varepsilon\Longleftrightarrow (ev4')_\varepsilon$.} Since $\alpha_\varepsilon$, $e_\varepsilon$, and $p_\varepsilon$ satisfy~\eqref{eq:ve_2eg1} and~\eqref{eq:ve_2eg2}, then $t\mapsto \mathcal E(\alpha_\varepsilon(t),e_\varepsilon(t),p_\varepsilon(t))$ is absolutely continuous on $[0,T]$, and for a.e.\ $t\in[0,T]$ we have
\begin{align*}
   \frac{\de}{\de t} \mathcal E(\alpha_\varepsilon(t),e_\varepsilon(t),p_\varepsilon(t))&=\partial_\alpha\mathcal E(\alpha_\varepsilon(t),e_\varepsilon(t),p_\varepsilon(t))[\dot\alpha_\varepsilon(t)]+(\C e_\varepsilon(t),\dot e_\varepsilon(t))_2\\&\quad+2(\nabla p_\varepsilon(t),\nabla\dot p_\varepsilon(t))_2+2(\B(\alpha_\varepsilon(t))p_\varepsilon(t),\dot p_\varepsilon(t))_2.
\end{align*}
By~$(ev1)_\varepsilon$ we deduce that $(\dot u_\varepsilon(t),\dot e_\varepsilon(t),\dot p_\varepsilon(t))\in\mathcal A(w(t))$ and
$$(\C e_\varepsilon(t),\dot e_\varepsilon(t))=(\C e_\varepsilon(t),\EE \dot w(t)-\dot p_\varepsilon(t))_2\quad\text{for a.e.\ $t\in[0,T]$}.$$
Hence, we derive that for a.e.\ $t\in[0,T]$
\begin{align}
   \frac{\de}{\de t} \mathcal E(\alpha_\varepsilon(t),e_\varepsilon(t),p_\varepsilon(t))&=\partial_\alpha\mathcal E(\alpha_\varepsilon(t),e_\varepsilon(t),p_\varepsilon(t))[\dot\alpha_\varepsilon(t)]+(\C e_\varepsilon(t),\EE \dot w(t))_2\nonumber\\
   &\quad-(\C e_\varepsilon(t),\dot p_\varepsilon(t))_2+2(\nabla p_\varepsilon(t),\nabla\dot p_\varepsilon(t))_2+2(\B(\alpha_\varepsilon(t))p_\varepsilon(t),\dot p_\varepsilon(t))_2.\label{eq:der_ene}
\end{align}
Notice that~$(ev4)_\varepsilon$ holds if and only if for a.e.\ $t\in[0,T]$
\begin{align*}
   \frac{\de}{\de t} \mathcal E(\alpha_\varepsilon(t),e_\varepsilon(t),p_\varepsilon(t))&=-\mathcal H(\dot p_\varepsilon(t))-\varepsilon\|\dot \alpha_\varepsilon(t)\|_2^2+(\C e_\varepsilon(t),\EE \dot w(t))_2.
\end{align*}
Therefore,~$(ev4)_\varepsilon$ is equivalent to the following identity: for a.e.\ $t\in[0,T]$
\begin{align*}
&\mathcal H(\dot p_\varepsilon(t))+\varepsilon\|\dot \alpha_\varepsilon(t)\|_2^2\\
&=-\partial_\alpha\mathcal E(\alpha_\varepsilon(t),e_\varepsilon(t),p_\varepsilon(t))[\dot\alpha_\varepsilon(t)]+(\C e_\varepsilon(t),\dot p_\varepsilon(t))_2-2(\nabla p_\varepsilon(t),\nabla\dot p_\varepsilon(t))_2-2(\B(\alpha_\varepsilon(t))p_\varepsilon(t),\dot p_\varepsilon(t))_2.
\end{align*}
By~$(ev2)_\varepsilon$ and~$(ev3)_\varepsilon$ we have
\begin{align}
&\mathcal H(\dot p_\varepsilon(t))\ge (\C e_\varepsilon(t),\dot p_\varepsilon(t))_2-2(\nabla p_\varepsilon(t),\nabla\dot p_\varepsilon(t))_2-2(\B(\alpha_\varepsilon(t))p_\varepsilon(t),\dot p_\varepsilon(t))_2& &\text{for a.e.\ $t\in[0,T]$},\label{eq:Hdotpe}\\
&\varepsilon\|\dot \alpha_\varepsilon(t)\|_2^2\ge -\partial_\alpha\mathcal E(\alpha_\varepsilon(t),e_\varepsilon(t),p_\varepsilon(t))[\dot\alpha_\varepsilon(t)]& &\text{for a.e.\ $t\in[0,T]$}.\label{eq:KTin}
\end{align}
Hence, we derive that~$(ev4)_\varepsilon$ is equivalent to~$(ev4')_\varepsilon$. 

{\bf~$(ev4)_\varepsilon\Longleftrightarrow (ev4'')_\varepsilon$.} Clearly~$(ev4)_\varepsilon$ implies~$(ev4'')_\varepsilon$. Let us prove the other implication. By~\eqref{eq:der_ene},~\eqref{eq:Hdotpe}, and~\eqref{eq:KTin} we deduce for a.e.\ $t\in[0,T]$  
\begin{align*}
   \frac{\de}{\de t} \mathcal E(\alpha_\varepsilon(t),e_\varepsilon(t),p_\varepsilon(t))&\ge(\C e_\varepsilon(t),\EE \dot w(t))_2-\mathcal H(\dot p_\varepsilon(t))-\varepsilon\|\dot \alpha_\varepsilon(t)\|_2^2.
\end{align*}
Integrating over the intervall $[0,t]$ for all $t\in[0,T]$ we get
 \begin{align*}
 &\mathcal E(\alpha_\varepsilon(t),e_\varepsilon(t),p_\varepsilon(t))+\int_0^t\mathcal H(\dot p_\varepsilon(r)) \,\de r+\varepsilon\int_0^t\|\dot\alpha_\varepsilon(r)\|_2^2 \,\de r\ge\mathcal E(\alpha_\varepsilon(0),e_\varepsilon(0),p_\varepsilon(0))+\int_0^t(\C e_\varepsilon(r),\EE \dot w(r))_2 \,\de r,
 \end{align*}
 which implies~$(ev4)_\varepsilon$ by~$(ev4'')_\varepsilon$.
\end{proof}

By $(ev3)_\varepsilon$ and $(ev4')_\varepsilon$ it follows that
\begin{equation*}
\varepsilon\|\dot \alpha_\varepsilon(t)\|_2=\sup_{\beta\in F}\langle -\partial_\alpha \mathcal E(\alpha_\varepsilon(t),e_\varepsilon(t),p_\varepsilon(t)),\beta\rangle\quad\text{for a.e.\ $t\in[0,T]$},
\end{equation*}
where 
$$F\coloneqq \{\beta\in H^1(\Omega)\,:\,\beta\le 0,\,\|\beta\|_2=1\}.$$
If we define
\begin{align}
&\Phi(f)\coloneqq \sup_{\beta\in F}\langle -f, \beta\rangle& &\text{for all $f\in (H^1(\Omega))'$},\label{eq:Phi}\\
&\Psi(\alpha,e,p)\coloneqq \Phi(\partial_\alpha \mathcal E(\alpha,e,p))& &\text{for all $(\alpha,e,p)\in H^1(\Omega;[0,1])\times L^2(\Omega;\M)\times H^1(\Omega;\M)$},\label{eq:Psi}
\end{align}
then we deduce
\begin{equation}\label{eq:Psi_eps}
\Psi(\alpha_\varepsilon(t),e_\varepsilon(t),p_\varepsilon(t))=\varepsilon\|\alpha_\varepsilon(t)\|_2.
\end{equation}
Hence, we can rewrite the energy balance $(ev4)_\varepsilon$ as: for all $t\in[0,T]$.
\begin{align*}
&\mathcal E(\alpha_\varepsilon(t),e_\varepsilon(t),p_\varepsilon(t))+\int_0^t\mathcal H(\dot p_\varepsilon(r)) \,\de r+\int_0^t\|\dot\alpha_\varepsilon(r)\|_2\Psi(\alpha_\varepsilon(r),e_\varepsilon(r),p_\varepsilon(r)) \,\de r\\
&=\mathcal E(\alpha_\varepsilon(0),e_\varepsilon(0),p_\varepsilon(0))+\int_0^t(\C e_\varepsilon(r),\EE \dot w(r))_2 \,\de r.
 \end{align*}

By arguing as in~\cite[Lemma 4.4]{CrLa} we can give another characterization for the operator $\Phi$ introduced in~\eqref{eq:Phi}.

\begin{lemma}\label{lem:Psi}
Define
$$G\coloneqq \{h\in (H^1(\Omega))'\,:\,\langle h,\beta\rangle\ge 0\quad\text{for all $\beta\in H^1(\Omega)$ with $\beta\le 0$}\}$$
and 
$$d_2(f,G)\coloneqq \min\{\|g\|_2\,:\,f+g\in G\}.$$
Then
$$\Phi(f)=d_2(f,G)\quad\text{for all $f\in (H^1(\Omega))'$}.$$
\end{lemma}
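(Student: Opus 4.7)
The plan is to prove $\Phi(f)=d_2(f,G)$ via two separate inequalities. It is convenient first to rewrite
\begin{equation*}
\Phi(f)=\sup\{\langle f,\gamma\rangle : \gamma\in H^1(\Omega),\ \gamma\ge 0,\ \|\gamma\|_2=1\}
\end{equation*}
through the substitution $\gamma=-\beta$, so that the supremum runs over nonnegative elements of $H^1(\Omega)$. The inequality $\Phi(f)\le d_2(f,G)$ is then immediate from Cauchy--Schwarz: given any $g\in L^2(\Omega)$ with $f+g\in G$ and $\gamma\in H^1(\Omega)$ nonnegative with $\|\gamma\|_2=1$, testing the defining condition of $G$ with $\beta=-\gamma$ gives
\begin{equation*}
\langle f,\gamma\rangle\le -\int_\Omega g\gamma\,\de x\le \|g\|_2\|\gamma\|_2=\|g\|_2,
\end{equation*}
and taking the supremum over such $\gamma$ followed by the infimum over admissible $g$ yields the claim.

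For the nontrivial inequality $d_2(f,G)\le \Phi(f)$ I would assume $\Phi(f)<+\infty$, the other case being trivial, and construct $g\in L^2(\Omega)$ with $f+g\in G$ and $\|g\|_2\le \Phi(f)$. Setting $h\coloneqq -g$, this is equivalent to producing $h\in L^2(\Omega)$ with $\|h\|_2\le \Phi(f)$ and $\int_\Omega h\gamma\,\de x\ge\langle f,\gamma\rangle$ for every nonnegative $\gamma\in H^1(\Omega)$. I would build such an $h$ via the Mazur--Orlicz extension theorem applied to the sublinear functional $p(\gamma)\coloneqq \Phi(f)\|\gamma\|_2$ on $H^1(\Omega)$ and to the data given by $\gamma\mapsto \langle f,\gamma\rangle$ on the cone of nonnegative elements. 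The required consistency condition is
\begin{equation*}
\Big\langle f,\sum_i\lambda_i\gamma_i\Big\rangle \le \Phi(f)\Big\|\sum_i\lambda_i\gamma_i\Big\|_2
\end{equation*}
for every finite family of nonnegative $\gamma_i\in H^1(\Omega)$ and reals $\lambda_i\ge 0$, which is exactly the definition of $\Phi(f)$ applied to the nonnegative combination $\sum_i\lambda_i\gamma_i$. Mazur--Orlicz then delivers a linear functional $\ell\colon H^1(\Omega)\to\R$ with $\ell\le p$ pointwise on $H^1(\Omega)$ and $\ell(\gamma)\ge\langle f,\gamma\rangle$ for every nonnegative $\gamma$. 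Evaluating $\ell\le p$ at $\pm\gamma$ gives $|\ell(\gamma)|\le \Phi(f)\|\gamma\|_2$, so $\ell$ extends by density to a continuous functional on $L^2(\Omega)$, represented by a unique $h\in L^2(\Omega)$ with $\|h\|_2\le \Phi(f)$. Setting $g\coloneqq -h$ then verifies $f+g\in G$ together with the sharp norm bound.

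The main obstacle I foresee is the mismatch between the ambient space $(H^1(\Omega))'$ in which $G$ is defined and the smaller Hilbert space $L^2(\Omega)$ in which the distance is measured: the admissible set $\{g\in L^2(\Omega): f+g\in G\}$ is not a priori known to be nonempty, so a direct Hilbert projection of $0$ onto it is not immediately available. The Mazur--Orlicz step bypasses this issue by simultaneously establishing nonemptiness and providing the sharp $L^2$-norm bound, reducing everything to the defining inequality of $\Phi(f)$. Equivalently, one could invoke the Hahn--Banach sandwich theorem with the superlinear functional $r(\gamma)\coloneqq\sup_{\delta\in H^1_+}[\langle f,\delta\rangle-\Phi(f)\|\delta-\gamma\|_2]$ trapped between $\langle f,\cdot\rangle$ (on nonnegative $\gamma$) and $p$, but the Mazur--Orlicz route makes the verification of the hypothesis most transparent.
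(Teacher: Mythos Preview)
Your proof is correct. The easy inequality via Cauchy--Schwarz is standard, and for the hard direction your use of the Mazur--Orlicz theorem is clean: since $\langle f,\cdot\rangle$ is linear, the consistency hypothesis collapses to the defining inequality $\langle f,\gamma\rangle\le\Phi(f)\|\gamma\|_2$ for nonnegative $\gamma\in H^1(\Omega)$, and the resulting $L^2$-bounded linear functional extends to $L^2(\Omega)$ by density of $H^1(\Omega)$ in $L^2(\Omega)$, producing the required $g=-h$ with $f+g\in G$ and $\|g\|_2\le\Phi(f)$.

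The paper does not give a self-contained argument here; it simply refers to \cite[Lemma~4.4]{CrLa} and remarks that one should replace $H^m(\Omega)$ by $H^1(\Omega)$ and use the density of $C^1(\overline\Omega)$ in $H^1(\Omega)$. Your Mazur--Orlicz route is self-contained and does not need that particular density step (only the more elementary density of $H^1$ in $L^2$), so it is at the very least a streamlined variant of whatever argument underlies the reference. The trade-off is that you invoke Mazur--Orlicz as a black box, whereas the cited approach presumably unpacks a Hahn--Banach separation more explicitly; either way the analytic content is the same.
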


\begin{proof}
The proof follows the same lines of the one of Lemma 4.4 of~\cite{CrLa}. It is enough to replace $H^m(\Omega)$ with $H^1(\Omega)$ and to use the density of $C^1(\overline\Omega)$ in $H^1(\Omega)$. 
\end{proof}

We are now ready to prove the existence result of $\varepsilon$-approximate viscous evolution for fixed $\varepsilon\in(0,1)$. 

\begin{theorem}\label{thm:eap_vis_ev}
Assume~\eqref{eq:d}--\eqref{eq:B3},~\eqref{eq:K}, and~\eqref{eq:ic}. For all $\varepsilon\in(0,1)$ there exists an $\varepsilon$-approximate viscous evolution $(\alpha_\varepsilon,u_\varepsilon, e_\varepsilon, p_\varepsilon)$ satisfying the initial condition
\begin{equation}\label{eq:eps_ic}
(\alpha_\varepsilon(0),u_\varepsilon(0), e_\varepsilon(0), p_\varepsilon(0))=(\alpha^0,u^0,e^0,p^0),
\end{equation} 
and the uniform estimates
\begin{equation}\label{eq:0_est}
 \int_0^T\|\dot\alpha_\varepsilon(r)\|_{H^1} \,\de r+\int_0^T\|\dot u_\varepsilon(r)\|_{H^1} \,\de r+\int_0^T\|\dot e_\varepsilon(r)\|_2 \,\de r+\int_0^T\|\dot p_\varepsilon(r)\|_{H^1} \,\de r\le C,   
\end{equation}
for a constant $C>0$ independent on $\varepsilon\in(0,1)$. 
\end{theorem}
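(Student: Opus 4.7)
The plan is to construct the $\varepsilon$-approximate viscous evolution by passing to the limit as $k\to\infty$, for fixed $\varepsilon\in(0,1)$, in the discrete scheme built in Section~\ref{sec:ap_ve}. All the key ingredients have been prepared: uniform bounds, discrete stability and stress-constraint inequalities at nodes, and the abstract equivalence in Proposition~\ref{prop:2210231935} between the energy balance and the energy inequality.

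\textbf{Step 1 (Compactness and limit quadruple).} From~\eqref{eq:k_unif_bound}, for $k\ge C/\varepsilon$ the piecewise affine interpolants $(\alpha_k,u_k,e_k,p_k)$ are uniformly bounded in
\[
H^1((0,T);H^1(\Omega))\times H^1((0,T);H^1(\Omega;\R^n))\times H^1((0,T);L^2(\Omega;\M))\times H^1((0,T);H^1(\Omega;\M)).
\]
I extract a subsequence converging weakly to $(\alpha_\varepsilon,u_\varepsilon,e_\varepsilon,p_\varepsilon)$ satisfying~\eqref{eq:ve_2eg1}--\eqref{eq:ve_2eg2}. By Ascoli--Arzel\`a, the convergence is uniform in $t$ with values in any space in which the embedding is compact; in particular Remark~\ref{rem:emb} yields $\alpha_k(t)\to\alpha_\varepsilon(t)$ and $p_k(t)\to p_\varepsilon(t)$ strongly in $L^\theta(\Omega)$ for $\theta\in[1,6)$, uniformly in $t\in[0,T]$. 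The piecewise constant interpolants $\overline\alpha_k,\overline u_k,\overline e_k,\overline p_k$ share the same limits (since $\|\alpha_k-\overline\alpha_k\|_\infty\le\tau_k\|\dot\alpha_k\|_{L^2(0,T;H^1)}^{1/2}\cdot\tau_k^{1/2}$ type estimates hold), and $\overline w_k(t)\to w(t)$ in $H^1(\Omega;\R^n)$ for each $t\in[0,T]$ by continuity of $w$. The initial condition~\eqref{eq:eps_ic} is preserved.

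\textbf{Step 2 (Verification of $(ev0)_\varepsilon$--$(ev3)_\varepsilon$).} Monotonicity $(ev0)_\varepsilon$ passes through pointwise limits. The kinematic admissibility $(u_\varepsilon(t),e_\varepsilon(t),p_\varepsilon(t))\in\mathcal A(w(t))$ follows from weak convergence and continuity of the trace operator, while $\div\C e_\varepsilon(t)=0$ is obtained from~\eqref{eq:divCeki} by testing against $C_c^\infty(\Omega;\R^n)$. The stress constraint $(ev2)_\varepsilon$ is derived by passing to the limit in Lemma~\ref{lem:ki_stab}(b) along $\overline\alpha_k(t),\overline e_k(t),\overline p_k(t)$; the nonlinear term $(\B(\overline\alpha_k(t))\overline p_k(t),q)_2$ converges thanks to strong $L^\theta$-convergence of $\overline\alpha_k(t)$ and $\overline p_k(t)$ together with $\|\B\|_\infty<\infty$, as quantified in Remark~\ref{rem:emb}. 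For the Kuhn--Tucker inequality $(ev3)_\varepsilon$, I integrate~\eqref{eq:st_aki} in time against an arbitrary non-negative $\phi\in L^2(0,T)$, obtaining a Riemann sum that converges: the $\partial_\alpha\mathcal E$ term is handled by the strong convergences above (also for the third summand of $\partial_\alpha\mathcal E$, via Remark~\ref{rem:emb}), while $(\dot\alpha_k,\beta)_2$ passes to the limit using $\dot\alpha_k\rightharpoonup\dot\alpha_\varepsilon$ in $L^2((0,T);L^2(\Omega))$. Arbitrariness of $\phi\ge0$ yields the inequality a.e., and a countable density argument in the cone $\{\beta\in H^1(\Omega):\beta\le 0\}$ (separable in $H^1$) gives the full statement.

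\textbf{Step 3 (Energy balance $(ev4)_\varepsilon$).} By Proposition~\ref{prop:2210231935} it suffices to prove the energy inequality $(ev4'')_\varepsilon$. Rewriting the discrete energy estimate for the interpolants in the spirit of~\eqref{eq:dis-qs-en-in}, with the extra viscous term $\varepsilon\int_0^{\overline\tau_k(t)}\|\dot\alpha_k\|_2^2\,\de r$ on the left-hand side, I pass to the limit using lower semicontinuity of $\mathcal Q$, $\|\nabla\cdot\|_2^2$ (convexity), $\int_0^t\mathcal H(\dot p_k)\,\de r$ (Lemma~\ref{lem:lsc} and weak convergence of $\dot p_k$), and $\int_0^t\|\dot\alpha_k\|_2^2\,\de r$; the terms $D(\overline\alpha_k)$ and $\tilde Q(\overline\alpha_k,\overline p_k)$ are continuous along the strong $L^\theta$-convergences by Remark~\ref{rem:emb}, and the external-work integral passes to the limit by weak convergence of $\underline e_k$ in $L^2((0,T);L^2(\Omega;\M))$, with $\delta_k\to0$.

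\textbf{Step 4 (Uniform estimate~\eqref{eq:0_est}).} This is an immediate consequence of~\eqref{eq:k_unif_bound2}, which is uniform in $\varepsilon$ and $k$, combined with weak lower semicontinuity of the $L^1((0,T);X)$-norms of the derivatives.

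The main obstacle is the passage to the limit in the $\B$-dependent bilinear and trilinear terms that appear in $(ev2)_\varepsilon$, $(ev3)_\varepsilon$ and in the energy, which would be hopeless against mere weak convergence; this is exactly where the compact embeddings $H^1\hookrightarrow L^\theta$ with $\theta\in[1,6)$ collected in Remark~\ref{rem:emb} (and thus the hypothesis $n\in\{2,3\}$) become essential. A secondary technical point is the upgrade from the time-integrated Kuhn--Tucker inequality to its a.e.\ pointwise form, which uses separability of the admissible test directions.
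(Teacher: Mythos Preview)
Your Steps~1, 2 and 4 are essentially the paper's argument; the compactness, the passage to the limit in $(ev0)_\varepsilon$--$(ev3)_\varepsilon$, and the derivation of~\eqref{eq:0_est} from~\eqref{eq:k_unif_bound2} all work as you describe.

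There is, however, a genuine gap in your Step~3. The discrete energy inequality obtained ``in the spirit of~\eqref{eq:dis-qs-en-in}'' is the one recorded in the viscous analogue of Lemma~\ref{lem:glslkey}, and it carries the viscous term with coefficient $\tfrac{\varepsilon}{2}$, not $\varepsilon$: testing~\eqref{eq:min_ki} at step $j$ with $(\alpha_k^{j-1},u_k^{j-1}+w_k^j-w_k^{j-1},\dots)$ yields $\tfrac{\varepsilon}{2\tau_k}\|\alpha_k^j-\alpha_k^{j-1}\|_2^2$ on the left and zero on the right. Passing to the limit you would only obtain
\[
\mathcal E(\alpha_\varepsilon(t),e_\varepsilon(t),p_\varepsilon(t))+\int_0^t\mathcal H(\dot p_\varepsilon)\,\de r+\tfrac{\varepsilon}{2}\int_0^t\|\dot\alpha_\varepsilon\|_2^2\,\de r\le \mathcal E(\alpha^0,e^0,p^0)+\int_0^t(\C e_\varepsilon,\EE\dot w)_2\,\de r,
\]
which is strictly weaker than $(ev4'')_\varepsilon$. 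Combining it with the lower bound coming from $(ev2)_\varepsilon$--$(ev3)_\varepsilon$ (as in the proof of Proposition~\ref{prop:2210231935}) does not close the gap either: you get the right-hand side sandwiched between the left-hand sides with coefficients $\tfrac{\varepsilon}{2}$ and $\varepsilon$, which is uninformative.

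The paper recovers the full coefficient $\varepsilon$ by a different route: it differentiates $t\mapsto\mathcal E(\alpha_k(t),e_k(t),p_k(t))$ along the \emph{affine} interpolants and replaces the $\partial_\alpha\mathcal E$ contribution using the discrete Kuhn--Tucker \emph{equality}~\eqref{eq:st_daki}, $\partial_\alpha\mathcal E(\alpha_k^i,e_k^i,p_k^i)[\dot\alpha_k^i]+\varepsilon\|\dot\alpha_k^i\|_2^2=0$, together with the refined plastic bound~\eqref{eq:Hdotpk}. The discrepancy between values at $\alpha_k(t)$ and at the nodes $\overline\alpha_k(t)$ produces error terms of order $\tau_k\int_0^T(\|\dot\alpha_k\|_{H^1}^2+\|\dot e_k\|_2^2+\|\dot p_k\|_{H^1}^2)\,\de r$, which are controlled by~\eqref{eq:k_unif_bound}. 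This is precisely where the $H^1$-in-time bound for fixed $\varepsilon$ is used and cannot be replaced by the $W^{1,1}$ bound alone. To fix your argument, you should use~\eqref{eq:st_daki} and~\eqref{eq:Hdotpk} in place of the raw telescoping estimate.
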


\begin{proof}
{\bf Step 1}. Let us fix $\varepsilon\in(0,1)$. Starting from $(\alpha^0,u^0,e^0,p^0)$ we consider the incremental problem~\eqref{eq:min_ki} and we obtain a sequence of piecewise affine interpolants $(\alpha_k,u_k,e_k,p_k)$ from $[0,T]$ into $H^1(\Omega;[0,1])\times\mathcal A$. By the uniform bounds~\eqref{eq:k_unif_bound} and Remark~\ref{rem:emb} there exist a subsequence (not relabeled) and a quadruple $(\alpha_\varepsilon,u_\varepsilon,e_\varepsilon,p_\varepsilon)$ of functions from $[0,T]$ into $H^1(\Omega;[0,1])\times\mathcal A$ satisfying as $k\to\infty$
\begin{align}
&\alpha_k\rightharpoonup \alpha_\varepsilon \quad\text{in $H^1((0,T);H^1(\Omega))$},& &\alpha_k\to \alpha_\varepsilon \quad\text{in $C^0([0,T];L^\theta(\Omega))$},\label{eq:kweak1}\\
& p_k\rightharpoonup p_\varepsilon \quad\text{in $H^1((0,T);H^1(\Omega;\M))$},& &p_k\to p_\varepsilon \quad\text{in $C^0([0,T];L^\theta(\Omega;\M))$},\label{eq:kweak2}\\
&e_k\rightharpoonup e \quad\text{in $H^1((0,T);L^2(\Omega;\M))$} & &u_k\rightharpoonup u \quad\text{in $H^1((0,T);H^1(\Omega;\R^n))$}\label{eq:kweak3}
\end{align}
for all $\theta\in[1,6)$. Moreover, for all $t\in[0,T]$ as $k\to\infty$
\begin{align}
&\overline\alpha_k(t) \rightharpoonup\alpha_\varepsilon(t) \quad\text{in $H^1(\Omega)$},& & \overline p_k(t)\rightharpoonup p_\varepsilon(t) \quad\text{in $H^1(\Omega;\M)$}, \label{eq:kweak4}\\
&\overline e_k(t)\rightharpoonup e(t) \quad\text{in $L^2(\Omega;\M)$} & &\overline u_k(t)\rightharpoonup u(t)\quad\text{in $H^1(\Omega;\R^n)$}\label{eq:kweak5}.
\end{align}
In particular, the initial conditions~\eqref{eq:eps_ic} are satisfied, and by passing to the limit as $k\to\infty$ in~\eqref{eq:k_unif_bound2}, we derive the uniform estimates~\eqref{eq:0_est}. Moreover, the weak convergences~\eqref{eq:kweak4} and~\eqref{eq:kweak5} imply that $(u_\varepsilon(t),e_\varepsilon(t),p_\varepsilon(t))\in\mathcal A(w(t))$ and $\alpha_\varepsilon(t)\in H^1(\Omega;[0,1])$ for all $t\in[0,T]$, and that for a.e.\ $x\in\Omega$ 
\begin{equation*}
t\mapsto \alpha_\varepsilon(t,x)\quad\text{is non increasing on $[0,T]$}.
\end{equation*}
By~\eqref{eq:divCeki} and~\eqref{eq:kweak4} we conclude that for all $t\in[0,T]$ 
$$\div \C e_\varepsilon(t)=0\quad\text{in $\mathcal D'(\Omega;\R^n)$}.$$
Furthermore, by using Lemma~\ref{lem:ki_stab} together with~\eqref{eq:kweak4}--\eqref{eq:kweak5}, we deduce that for all $t\in[0,T]$
\begin{equation*}
\mathcal H(q)\ge (\C e_\varepsilon(t),q)_2-2(\B(\alpha_\varepsilon(t))p_\varepsilon(t),q)_2-2(\nabla p_\varepsilon(t),\nabla q)_2\quad\text{for all $q\in H^1(\Omega;\M)$}.
\end{equation*}
To conclude, it remains to prove~$(ev3)_\varepsilon$ and~$(ev4'')_\varepsilon$.

{\bf Step 2}. Since the functions $\alpha_k$, $e_k$, and $p_k$ are absolutely continuous, we derive that the function $t\mapsto \mathcal E(\alpha_k(t),e_k(t),p_k(t))$ is absolutely continuous and for a.e.\ $t\in[0,T]$
\begin{align*}
\frac{\de}{\de t}\mathcal E(\alpha_k(t),e_k(t),p_k(t))&=(\C e_k(t),\dot e_k(t))_2+2(\mathbb B(\alpha_k(t))p_k(t),\dot p_k(t))_2+2(\nabla p_k(t),\nabla\dot p_k(t))_2\\&\quad+\partial_\alpha\mathcal E(\alpha_k(t),e_k(t),p_k(t))[\dot \alpha_k(t)].
\end{align*}
Hence, by using also~\eqref{eq:st_daki} we conclude that for a.e.\ $t\in[0,T]$
\begin{align}
&\frac{\de}{\de t}\mathcal E(\alpha_k(t),e_k(t),p_k(t))\nonumber\\
&=(\C e_k(t),\dot e_k(t))_2+2(\mathbb B(\alpha_k(t))p_k(t),\dot p_k(t))_2+2(\nabla p_k(t),\nabla\dot p_k(t))_2-\varepsilon\|\dot \alpha_k(t)\|_2^2.\label{eq:eps_A1}
\end{align}
Notice that 
\begin{align}
(\C e_k(t),\dot e_k(t))_2&=(\C \overline e_k(t),\dot e_k(t))_2+(\C (e_k(t)-\overline e_k(t)),\dot e_k(t))_2,\label{eq:eps_A3}\\
(\mathbb B(\alpha_k(t))p_k(t),\dot p_k(t))_2&=(\mathbb B(\overline\alpha_k(t))\overline p_k(t),\dot p_k(t))_2+(\mathbb B(\overline \alpha_k(t))[p_k(t)-\overline p_k(t)],\dot p_k(t))_2\nonumber\\
&\quad+(\mathbb [\mathbb B(\alpha_k(t))-\mathbb B(\overline \alpha_k(t))]p_k(t),\dot p_k(t))_2,\\
(\nabla p_k(t),\nabla\dot p_k(t))_2&=(\nabla \overline p_k(t),\nabla\dot p_k(t))_2+(\nabla \overline p_k(t)-\nabla p_k(t),\nabla\dot p_k(t))_2.
\end{align}
Thanks to~\eqref{eq:int_for}, since $(\dot u_k(t),\dot e_k(t),\dot p_k(t))\in \mathcal A(\dot w_k(t))$ for a.e.\ $t\in[0,T]$, we get
\begin{align}
(\C \overline e_k(t),\dot e_k(t))_2=(\C \overline e_k(t),\EE \dot w_k(t))_2-(\C \overline e_k(t),\dot p_k(t))_2\quad\text{for a.e.\ $t\in[0,T]$}.
\end{align}
By~\eqref{eq:Hdotpk}, for a.e.\ $t\in[0,T]$ we get
\begin{align}
&-(\C \overline e_k(t),\dot p_k(t))_2+2(\mathbb B(\overline\alpha_k(t))\overline p_k(t),\dot p_k(t))_2+2(\nabla \overline p_k(t),\nabla\dot p_k(t))_2\nonumber\\
&\le -\mathcal H(\dot p_k(t))+C\tau_k\left(\|\dot\alpha_k(t)\|_2^2+\|\EE \dot w_k(t)\|_2^2\right).\label{eq:eps_A2}
\end{align}
By integrating~\eqref{eq:eps_A1} over the interval $[0,t]$ for all $t\in[0,T]$, and using~\eqref{eq:eps_A3}--\eqref{eq:eps_A2},~\eqref{eq:k_unif_bound}, and Remark~\ref{rem:emb} we deduce 
\begin{align*}
&\mathcal E(\alpha_k(t),e_k(t),p_k(t))-\mathcal E(\alpha^0,e^0,p^0)\\&=\int_0^t\left((\C e_k(r),\dot e_k(r))_2+2(\mathbb B(\alpha_k(r))p_k(r),\dot p_k(r))_2+2(\nabla p_k(r),\nabla\dot p_k(r))_2-\varepsilon\|\dot \alpha_k(r)\|_2^2\right) \,\de r\\
&\le \int_0^t(\C \overline e_k(r),\EE \dot w_k(r))_2 \,\de r-\int_0^t\mathcal H(\dot p_k(r)) \,\de r-\varepsilon\int_0^t\|\dot \alpha_k(r)\|_2^2 \,\de r\\
&\quad+C\int_0^t\left(\|e_k(r)-\overline e_k(r)\|_2\|\dot e_k(r)\|_2+\|p_k(r)-\overline p_k(r)\|_2\|\dot p_k(r)\|_2\right) \,\de r\\
&\quad+C\int_0^t\left(\|\alpha_k(r)-\overline\alpha_k(r)\|_2\|p_k(r)\|_{H^1}\|\dot p_k(r)\|_{H^1}+\|\nabla \overline p_k(r)-\nabla p_k(r)\|_2\|\nabla\dot p_k(r)\|_2\right) \,\de r\\
&\quad+C\tau_k\int_0^t\left(\|\dot\alpha_k(r)\|_2^2+\|\EE \dot w_k(r)\|_2^2\right)\,\de r\\
&\le \int_0^t(\C \overline e_k(r),\EE \dot w_k(r))_2 \,\de r-\int_0^t\mathcal H(\dot p_k(r)) \,\de r-\varepsilon\int_0^t\|\dot \alpha_k(r)\|_2^2 \,\de r\\
&\quad+C\tau_k\int_0^t\left(\|\dot e_k(r)\|_2^2+\|\dot p_k(r)\|_{H^1}^2+\|\dot \alpha_k(r)\|_2^2+\|\EE \dot w_k(r)\|_2^2\right)\,\de r\\
&\le \int_0^t(\C \overline e_k(r),\EE \dot w_k(r))_2 \,\de r-\int_0^t\mathcal H(\dot p_k(r)) \,\de r-\varepsilon\int_0^t\|\dot \alpha_k(r)\|_2^2 \,\de r+C\tau_k
\end{align*}
for a constant $C>0$ independent on $k$. Hence, for every $t\in[0,T]$ we have
\begin{align*}
&\mathcal E(\alpha_k(t),e_k(t),p_k(t))+\int_0^t\mathcal H(\dot p_k(r)) \,\de r+\varepsilon\int_0^t\|\dot\alpha_k(r)\|_2^2 \,\de r\le \mathcal E(\alpha^0,e^0,p^0)+\int_0^t(\C \overline e_k(r),\EE \dot w_k(r))_2 \,\de r+C\tau_k.
\end{align*}
By sending $k\to\infty$ and using the weak convergences~\eqref{eq:kweak1}--\eqref{eq:kweak5} together with the lower semicontuinty of the left-hand side (see Remark~\ref{rem:lsc}), we get~$(ev_4'')_\varepsilon$.

{\bf Step 3}. By~\eqref{eq:st_aki}, for all $\beta\in L^\infty((0,T);H^1(\Omega))$ with $\beta\le 0$ we have
\begin{align*}
0&\le (\dot{d}(\alpha_\varepsilon(t)),\beta(t))_2+2(\nabla\overline \alpha_k(t), \nabla\beta(t))_2+(\dot{\B}(\alpha_\varepsilon(t))\beta(t)\overline p_k(t),\overline p_k(t))_2+\varepsilon(\dot\alpha_k(t),\beta(t))_2\\
&\quad + (\dot{d}(\overline\alpha_k(t))-\dot{d}(\alpha_\varepsilon(t)),\beta(t))_2+([\dot{\B}(\overline\alpha_k(t))-\dot{\B}(\alpha_\varepsilon(t))]\beta(t)\overline p_k(t),\overline p_k(t))_2.
\end{align*}
By Remark~\ref{rem:emb}, as $k\to \infty$
\begin{align*}
&|(\dot{d}(\overline\alpha_k(t))-\dot{d}(\alpha_\varepsilon(t)),\beta(t))_2|\le \|\ddot d\|_\infty\|\overline\alpha_k(t)-\alpha_\varepsilon(t)\|_2\|\beta(t)\|_2\to 0,\\
&|([\dot{\B}(\overline\alpha_k(t))-\dot{\B}(\alpha_\varepsilon(t))]\beta(t)\overline p_k(t),\overline p_k(t))_2|\le  \|\ddot{\B}\|_\infty\|\overline\alpha_k(t)-\alpha_\varepsilon(t)\|_4\|\beta(t)\|_4\|\overline p_k(t)\|_4^2\to 0
\end{align*}
in view of the strong convergences in~\eqref{eq:kweak1}, see also~\eqref{eq:kweak4}. Moreover, thanks to the strong convergences in~\eqref{eq:kweak2} we get
$$(\dot{\B}(\alpha_\varepsilon(t))\beta(t)p_\varepsilon(t),p_\varepsilon(t))_2= \lim_{k\to\infty}(\dot{\B}(\alpha_\varepsilon(t))\beta(t)\overline p_k(t),\overline p_k(t))_2.$$
Hence, we can apply the Dominated Convergence Theorem to derive that
\begin{align*}
0&\le \lim_{k\to\infty}\int_0^T[(\dot{d}(\alpha_\varepsilon(r)),\beta(r))_2+2(\nabla\overline \alpha_k(r), \nabla\beta(r))_2+(\dot{\B}(\alpha_\varepsilon(r))\beta(r)\overline p_k(r),\overline p_k(r))_2+\varepsilon(\dot\alpha_k(r),\beta(r))_2]\, \de r\\
&=\int_0^T[(\dot{d}(\alpha_\varepsilon(r)),\beta(r))_2+2(\nabla\alpha_\varepsilon(r), \nabla\beta(r))_2+(\dot{\B}(\alpha_\varepsilon(r))\beta(r) p_\varepsilon(r), p_\varepsilon(r))_2+\varepsilon(\dot\alpha_\varepsilon(r),\beta(r))_2] \,\de r.
\end{align*}
If we fix $\beta\in H^1(\Omega)$ with $\beta\le 0$ a.e.\ in $\Omega$, by choosing $\beta(t)=\beta\chi_A(t)$ with $A\subset[0,T]$ arbitrary set, we get
\begin{align*}
(\dot{d}(\alpha_\varepsilon(t)),\beta)_2+(\dot{\B}(\alpha_\varepsilon(t))\beta p_\varepsilon(t),p_\varepsilon(t))_2+2(\nabla\alpha_\varepsilon(t), \nabla\beta)_2+\varepsilon(\dot\alpha_\varepsilon(t),\beta)_2\ge 0\quad\text{for all $t\in[0,T]\setminus E_\beta$},
\end{align*}
where $E_\beta$ is a  negligibile set depending on $\beta$. Since $\{\alpha\in H^1(\Omega)\,:\,\alpha\le 0\}$ is separable, we get~$(ev_3)_\varepsilon$.
\end{proof}

\section{Balanced viscosity (BV) solutions}\label{sec:res_qvs}

In the previous section for all $\varepsilon\in(0,1)$ we found an $\varepsilon$-approximate viscous evolution $(\alpha_\varepsilon,u_\varepsilon,e_\varepsilon,p_\varepsilon)$ from $[0,T]$ into $H^1(\Omega;[0,1])\times\mathcal A$ which satisfies~\eqref{eq:0_est}. We now introduce a ``slow'' time scale $s$ and we pass to the limit as $\varepsilon\to 0$, to obtain a Balanced Viscosity quasistatic evolution in the spirit of~\cite{CrLa}, see also~\cite{DMDSSo,EfMi,MieRosSav12}. We first introduce the following definition.

\begin{definition}\label{def:BV_sol}
Assume~\eqref{eq:d}--\eqref{eq:B3} and~\eqref{eq:K}. A quintuplet of Lipschitz functions $(\alpha_0,u_0,e_0,p_0,t_0)$ from $[0,S]$ into $H^1(\Omega;[0,1])\times\mathcal A\times [0,T]$ is \textit{Balanced Viscosity quasistatic evolution} in the time interval $[0,S]$ with datum $w$, if setting for all $s\in[0,S]$
\begin{align*}
w_0(s)\coloneqq w(t_0(s))\quad\text{and}\quad U_0\coloneqq \{s\in[0,S]\,:\,\text{$t_0$ is constant in a neighborhood of $s$}\},
\end{align*}
the following conditions are satisfied:
\begin{itemize}
\item[$(ev0)$] {\it irreversibility}: $t_0$ is non decreasing and surjective, and for a.e.\ $x\in\Omega$ the map
\begin{equation*}
s\mapsto \alpha_0(s,x)\quad\text{is non increasing on $[0,S]$};
\end{equation*}
\item[$(ev1)$] {\it kinematic condition and equilibrium}: for all $s\in[0,S]$ 
\begin{equation*} 
(u_0(s),e_0(s),p_0(s))\in \mathcal A(w_0(s))\quad\text{and}\quad \div \C e_0(s)=0\quad\text{in $\mathcal D'(\Omega;\R^n)$};
\end{equation*}
\item[$(ev2)$] {\it stress constraint}: for all $s\in[0,S]$
\begin{equation*}
\mathcal H(q)\ge (\C e_0(s),q)_2-2(\B(\alpha_0(s))p_0(s),q)_2-2(\nabla p_0(s),\nabla q)_2\quad\text{for all $q\in H^1(\Omega;\M)$;}
\end{equation*}
\item[$(ev3)$] {\it Kuhn–Tucker Inequality in $[0,S]\setminus U_0$}: for all $s\in[0,S]\setminus U_0$
\begin{equation*}
\partial_\alpha\mathcal E(\alpha_0(s),e_0(s),p_0(s))[\beta]\ge 0\quad\text{for all $\beta\in H^1(\Omega)$ with $\beta \le 0$;}
\end{equation*}
\item[$(ev4)$] {\it energy balance}: for all $s\in[0,S]$
\begin{align*}
&\mathcal E(\alpha_0(s),e_0(s),p_0(s))+\int_0^s\mathcal H(\dot p_0(r)) \,\de r+\int_0^s\|\dot\alpha_0(r)\|_2\Psi(\alpha_0(r),e_0(r),p_0(r)) \,\de r\\
&=\mathcal E(\alpha_0(0),e_0(0),p_0(0))+\int_0^s(\C e_0(r),\EE \dot w_0(r))_2 \,\de r,
\end{align*}
where $\Psi$ is the function defined in~\eqref{eq:Psi}, with the convention $0\cdot\infty=0$.
\end{itemize}
\end{definition}

\begin{remark} 
A similar definition for a different model with incomplete damage can be found in~\cite{CrLa}, where it is called rescaled quasistatic viscosity evolution. By Lemma~\ref{lem:Psi} and~$(ev3)$, for every $s\in [0,S]\setminus U_0$ we have
\begin{align*}
 0\le d(\partial_{\alpha}\mathcal E(\alpha_0(s),e_0(s),p_0(s)),G)&=\Psi(\alpha_0(s),e_0(s),p_0(s))=\sup_{\beta\in F}\langle-\partial_{\alpha}\mathcal E(\alpha_0(s),e_0(s),p_0(s)),\beta\rangle\le 0, 
\end{align*}
which gives
$$\Psi(\alpha_0(s),e_0(s),p_0(s))=0\quad\text{for all $s\in [0,S]\setminus U_0$}.$$
\end{remark}

Similarly to Proposition~\ref{prop:2210231935}, the following is a technical proposition instrumental to prove existence of Balanced Viscosity quasistatic solutions.

\begin{proposition}\label{prop:eq0}
Assume~\eqref{eq:d}--\eqref{eq:B3} and~\eqref{eq:K}. Let $(\alpha_0,u_0,e_0,p_0,t_0)$ be a quintuplet of Lipschitz functions from $[0,S]$ into $H^1(\Omega;[0,1])\times \mathcal A\times [0,T]$ satisfying~$(ev0)$,~$(ev1)$,~$(ev2)$, and~$(ev3)$. Then $(\alpha_0,u_0,e_0,p_0,t_0)$ is a Balanced Viscosity quasistatic evolution, i.e., it satisfies the energy balance~$(ev4)$, if and only if any of the conditions
below holds true:
\begin{itemize}
 \item[$(ev4')$] for a.e.\ $s\in[0,S]$ the following hold:\\
generalized Kuhn–Tucker equality: 
\begin{equation*}
-\partial_\alpha\mathcal E(\alpha_0(s),e_0(s),p_0(s))[\dot\alpha_0(s)]=\|\dot \alpha_0(s)\|_2\Psi(\alpha_0(s),e_0(s),p_0(s));
\end{equation*}
Hill’s maximum plastic work principle:
\begin{equation*}
\mathcal H(\dot p_0(s))=(\C e_0(t),\dot p_0(s))_2-2(\B(\alpha_0(s))p_0(s),\dot p_0(s))_2-2(\nabla p_0(s),\nabla \dot p_0(s))_2;
\end{equation*}
 \item[$(ev4'')$] energy inequality: for all $s\in[0,S]$
 \begin{align*}
 &\mathcal E(\alpha_0(s),e_0(s),p_0(s))+\int_0^s\mathcal H(\dot p_0(r)) \,\de r+\int_0^s\|\dot\alpha_0(r)\|_2\Psi(\alpha_0(r),e_0(r),p_0(r)) \,\de r\nonumber\\
 &\le\mathcal E(\alpha_0(0),e_0(0),p_0(0))+\int_0^s(\C e_0(r),\EE \dot w_0(r))_2 \,\de r.
 \end{align*}
\end{itemize}
\end{proposition}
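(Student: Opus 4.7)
The plan is to mimic the proof of Proposition~\ref{prop:2210231935} in the slow-time setting, with the quadratic viscous dissipation $\varepsilon\|\dot\alpha_\varepsilon\|_2^2$ replaced by the rate-independent contribution $\|\dot\alpha_0\|_2\Psi(\alpha_0,e_0,p_0)$ and the $\varepsilon$-perturbed Kuhn--Tucker inequality by its unperturbed version $(ev3)$. Since $(\alpha_0,u_0,e_0,p_0)$ are Lipschitz in $s$, the map $s\mapsto\mathcal E(\alpha_0(s),e_0(s),p_0(s))$ is absolutely continuous on $[0,S]$; differentiating it and using that the chain rule and $(ev1)$ yield $(\dot u_0(s),\dot e_0(s),\dot p_0(s))\in\mathcal A(\dot w_0(s))$ for a.e.~$s\in[0,S]$, one obtains the same identity as in~\eqref{eq:der_ene}, namely
\begin{align*}
\frac{\de}{\de s}\mathcal E(\alpha_0,e_0,p_0)&=\partial_\alpha\mathcal E(\alpha_0,e_0,p_0)[\dot\alpha_0]+(\C e_0,\EE\dot w_0)_2\\
&\quad-(\C e_0,\dot p_0)_2+2(\B(\alpha_0)p_0,\dot p_0)_2+2(\nabla p_0,\nabla\dot p_0)_2.
\end{align*}

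For the equivalence $(ev4)\Longleftrightarrow(ev4')$, I would then derive two pointwise one-sided estimates. The Hill-type inequality
\[
\mathcal H(\dot p_0)\ge (\C e_0,\dot p_0)_2-2(\B(\alpha_0)p_0,\dot p_0)_2-2(\nabla p_0,\nabla\dot p_0)_2
\]
follows by testing $(ev2)$ with $q=\dot p_0(s)\in H^1(\Omega;\M)$. For the Kuhn--Tucker side, irreversibility $(ev0)$ gives $\dot\alpha_0(s)\le 0$ a.e.\ in $\Omega$, so that $\dot\alpha_0/\|\dot\alpha_0\|_2$ lies in the admissible class $F$ defining $\Phi$ whenever $\dot\alpha_0\ne 0$; by the definition of $\Psi$ in~\eqref{eq:Psi} this yields
\[
\|\dot\alpha_0\|_2\Psi(\alpha_0,e_0,p_0)\ge -\partial_\alpha\mathcal E(\alpha_0,e_0,p_0)[\dot\alpha_0],
\]
trivially extended under the convention $0\cdot\infty=0$ at points where $\dot\alpha_0$ vanishes. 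Combining these two inequalities with the derivative identity shows that $(ev4)$ is equivalent to both holding as equalities a.e., which is precisely $(ev4')$.

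The implication $(ev4)\Rightarrow(ev4'')$ is immediate. For the converse, the derivative identity together with the two one-sided inequalities gives
\[
\frac{\de}{\de s}\mathcal E(\alpha_0,e_0,p_0)\ge(\C e_0,\EE\dot w_0)_2-\mathcal H(\dot p_0)-\|\dot\alpha_0\|_2\Psi(\alpha_0,e_0,p_0)\quad\text{for a.e.~$s\in[0,S]$;}
\]
integrating on $[0,s]$ produces the reverse of the inequality in $(ev4'')$, and combining with $(ev4'')$ itself forces both to be equalities, hence $(ev4)$.

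The main subtlety I expect concerns the damage dissipation across the decomposition $[0,S]=U_0\cup([0,S]\setminus U_0)$: on $[0,S]\setminus U_0$, $(ev3)$ combined with Lemma~\ref{lem:Psi} forces $\Psi(\alpha_0,e_0,p_0)=0$, so the Kuhn--Tucker inequality above reduces to $(ev3)$ applied with $\beta=\dot\alpha_0$; on $U_0$, where $\dot t_0=0$ but $\Psi$ may be strictly positive, one must check that $s\mapsto\|\dot\alpha_0(s)\|_2\Psi(\alpha_0(s),e_0(s),p_0(s))$ is measurable and integrable, which follows from the Lipschitz regularity of the trajectory and the identification of $\Psi$ with the $L^2$-distance $d_2(\partial_\alpha\mathcal E(\cdot),G)$ given by Lemma~\ref{lem:Psi}.
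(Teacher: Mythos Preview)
Your proposal is correct and follows exactly the route the paper intends: the paper's own proof consists of the single line ``The proof is analogous to the one of Proposition~\ref{prop:2210231935}'', and your adaptation---replacing $\varepsilon\|\dot\alpha_\varepsilon\|_2^2$ by $\|\dot\alpha_0\|_2\Psi(\alpha_0,e_0,p_0)$ and obtaining the Kuhn--Tucker one-sided inequality directly from the definition of $\Psi$ via $\dot\alpha_0/\|\dot\alpha_0\|_2\in F$ rather than from $(ev3)_\varepsilon$---is precisely that analogy. One minor remark: the final paragraph on the $U_0$ versus $[0,S]\setminus U_0$ splitting is not actually needed for the argument, since your Kuhn--Tucker inequality $\|\dot\alpha_0\|_2\Psi\ge-\partial_\alpha\mathcal E[\dot\alpha_0]$ already holds a.e.\ on all of $[0,S]$ from the definition of $\Psi$ alone, without invoking $(ev3)$.
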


\begin{proof}
The proof is analogous to the one of Proposition~\ref{prop:2210231935}.
\end{proof}

We eventually prove existence of Balanced Viscosity quasistatic solutions. 

\begin{theorem}\label{thm:BV_evol}
Assume~\eqref{eq:d}--\eqref{eq:B3},~\eqref{eq:K}, and~\eqref{eq:ic}. Then there exist $S>0$ and a Balanced Viscosity quasistatic solution $(\alpha_0,u_0,e_0,p_0,t_0)$ in the time interval $[0,S]$ satisfying the initial condition
\begin{equation}\label{eq:0_in-con}
(\alpha_0(0),u_0(0),e_0(0),p_0(0),t_0(0))=(\alpha^0,u^0,e^0,p^0,0).
\end{equation}
\end{theorem}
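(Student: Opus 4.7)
The plan is to adapt the vanishing-viscosity rescaling scheme of~\cite{CrLa} to the present setting, starting from the family of $\varepsilon$-approximate viscous evolutions $(\alpha_\varepsilon,u_\varepsilon,e_\varepsilon,p_\varepsilon)$ produced in Theorem~\ref{thm:eap_vis_ev} and the uniform estimate~\eqref{eq:0_est}. For each $\varepsilon\in(0,1)$ I would introduce the arc-length type reparametrization
\[
\sigma_\varepsilon(t)\coloneqq t+\int_0^t\Big(\|\dot\alpha_\varepsilon(r)\|_{H^1}+\|\dot u_\varepsilon(r)\|_{H^1}+\|\dot e_\varepsilon(r)\|_2+\|\dot p_\varepsilon(r)\|_{H^1}\Big)\,\de r,
\]
which satisfies $T\le S_\varepsilon\coloneqq\sigma_\varepsilon(T)\le T+C$ by~\eqref{eq:0_est}; up to a (not relabeled) subsequence $S_\varepsilon\to S\in[T,T+C]$, and by constant extension in $s$ from $S_\varepsilon$ to $S$ we may work on a common interval $[0,S]$. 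Denoting by $t_\varepsilon\colon[0,S_\varepsilon]\to[0,T]$ the inverse of $\sigma_\varepsilon$, the rescaled quintuplet $\hat\alpha_\varepsilon(s)\coloneqq\alpha_\varepsilon(t_\varepsilon(s))$, $\hat u_\varepsilon(s)\coloneqq u_\varepsilon(t_\varepsilon(s))$, and so on, is uniformly $1$-Lipschitz in the graph norm of $H^1(\Omega)\times\mathcal A\times\R$. Combining this with the uniform bounds of Remark~\ref{rem:unif_bound} and the compact embeddings of Remark~\ref{rem:emb}, an Arzelà-Ascoli argument yields a further subsequence converging uniformly on $[0,S]$ to a Lipschitz limit $(\alpha_0,u_0,e_0,p_0,t_0)$ satisfying the initial condition~\eqref{eq:0_in-con}.

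Next I would verify $(ev_0)$--$(ev_3)$ by passing to the limit in $(ev_0)_\varepsilon$--$(ev_3)_\varepsilon$. Monotonicity of $\alpha_0$ and $t_0$ follows from pointwise limits, surjectivity of $t_0$ from $t_\varepsilon([0,S_\varepsilon])=[0,T]$, and the kinematic compatibility, the equilibrium, and the stress constraint $(ev_1)$--$(ev_2)$ are inherited by weak $H^1$ and strong $L^\theta$ convergence exactly as in Step~1 of the proof of Theorem~\ref{thm:eap_vis_ev}. For $(ev_3)$, the key dichotomy is the following: at each $s\in[0,S]\setminus U_0$, $t_0$ is strictly increasing in a neighborhood, hence $t_\varepsilon'$ stays bounded away from $0$ on a common subinterval in the limit; rewriting $(ev_3)_\varepsilon$ along $t=t_\varepsilon(s)$ makes the viscous correction $\varepsilon(\dot\alpha_\varepsilon,\beta)_2$ equal to $(\varepsilon/t_\varepsilon'(s))(\hat{\dot\alpha}_\varepsilon(s),\beta)_2$, which vanishes with $\varepsilon\to 0$ since $\|\hat{\dot\alpha}_\varepsilon(s)\|_{H^1}$ is uniformly bounded, yielding $\partial_\alpha\mathcal E(\alpha_0(s),e_0(s),p_0(s))[\beta]\ge 0$ for every admissible $\beta$.

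To establish the energy balance $(ev_4)$, by Proposition~\ref{prop:eq0} it suffices to prove the one-sided inequality $(ev_4'')$ and then recover equality from $(ev_1)$--$(ev_3)$. Rescaling $(ev_4)_\varepsilon$ via $t=t_\varepsilon(s)$ and using~\eqref{eq:Psi_eps} transforms the balance into
\[
\mathcal E(\hat\alpha_\varepsilon(s),\hat e_\varepsilon(s),\hat p_\varepsilon(s))+\int_0^s\mathcal H(\hat{\dot p}_\varepsilon)\,\de r+\int_0^s\|\hat{\dot\alpha}_\varepsilon\|_2\,\Psi(\hat\alpha_\varepsilon,\hat e_\varepsilon,\hat p_\varepsilon)\,\de r=\mathcal E(\alpha^0,e^0,p^0)+\int_0^s(\C \hat e_\varepsilon,\EE\dot{\hat w}_\varepsilon)_2\,\de r
\]
with $\hat w_\varepsilon(s)\coloneqq w(t_\varepsilon(s))$. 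The first two left-hand terms pass to the $\liminf$ by lower semicontinuity (Remark~\ref{rem:lsc}) and the right-hand work term converges by dominated convergence. The hard part is the lower-semicontinuous passage in the Finsler-type dissipation $\int_0^s\|\hat{\dot\alpha}_\varepsilon\|_2\,\Psi(\hat\alpha_\varepsilon,\hat e_\varepsilon,\hat p_\varepsilon)\,\de r$, since the two factors may simultaneously degenerate (either may vanish or blow up along the sequence). I plan to overcome this by combining the characterization $\Psi(\cdot)=d_2(\partial_\alpha\mathcal E(\cdot),G)$ of Lemma~\ref{lem:Psi}, the strong convergence of $\partial_\alpha\mathcal E(\hat\alpha_\varepsilon,\hat e_\varepsilon,\hat p_\varepsilon)$ in $(H^1(\Omega))'$ (which follows via Remark~\ref{rem:emb} from the uniform $L^4$ convergences of $\hat\alpha_\varepsilon$ and $\hat p_\varepsilon$), and the standard weak lower semicontinuity of Finsler length functionals, proceeding as in~\cite[Proof of Theorem~5.1]{CrLa}. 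Once $(ev_4'')$ is secured, the converse inequality follows by differentiating $s\mapsto\mathcal E(\alpha_0(s),e_0(s),p_0(s))$ and invoking $(ev_2)$, $(ev_3)$ as in the proof of Proposition~\ref{prop:eq0}, giving $(ev_4)$.
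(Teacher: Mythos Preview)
Your overall strategy coincides with the paper's: arc-length reparametrization of the $\varepsilon$-evolutions, Ascoli--Arzel\`a compactness, passage to the limit in $(ev0)_\varepsilon$--$(ev2)_\varepsilon$, and reduction of $(ev4)$ to the one-sided inequality $(ev4'')$ via Proposition~\ref{prop:eq0}. However, two steps are not correctly justified.

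Your argument for $(ev3)$ has a genuine gap. For $s\notin U_0$ you claim that $t_0$ is strictly increasing in a neighborhood of $s$ and that $\dot t_\varepsilon$ stays bounded away from $0$ on a common subinterval. Neither assertion is warranted: $s\notin U_0$ only means $t_0$ is not locally constant near $s$, not that it is strictly increasing there, and weak* convergence of $\dot t_\varepsilon$ in $L^\infty$ gives no pointwise lower bound. Moreover $(ev3)_\varepsilon$ holds only for a.e.\ $t$, so it cannot simply be evaluated at a fixed $s$. The paper proceeds differently: it introduces the open set $A_0\coloneqq\{s:\Psi(\alpha_0(s),e_0(s),p_0(s))>0\}$ and shows $A_0\subset U_0$. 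Using $\Psi(\hat\alpha_\varepsilon(s),\hat e_\varepsilon(s),\hat p_\varepsilon(s))=\varepsilon\|\hat{\dot\alpha}_\varepsilon(s)\|_2/\dot t_\varepsilon(s)$ from~\eqref{eq:Psi_eps} together with the measure-theoretic fact that $\limsup_\varepsilon \dot t_\varepsilon(s)>0$ for a.e.\ $s$ with $\dot t_0(s)>0$, one obtains $\Psi(\alpha_0(s),\ldots)=0$ a.e.\ on $\{\dot t_0>0\}$; hence $\dot t_0=0$ a.e.\ on $A_0$, and openness of $A_0$ upgrades this to $A_0\subset U_0$, giving $(ev3)$ at \emph{every} $s\notin U_0$.

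Your justification for the lower semicontinuity of the Finsler term is also inaccurate. You invoke strong convergence of $\partial_\alpha\mathcal E(\hat\alpha_\varepsilon,\hat e_\varepsilon,\hat p_\varepsilon)$ in $(H^1(\Omega))'$, but $\partial_\alpha\mathcal E$ contains the term $2(\nabla\hat\alpha_\varepsilon,\nabla\beta)_2$, and $\nabla\hat\alpha_\varepsilon$ converges only weakly in $L^2$; the $L^4$ convergences from Remark~\ref{rem:emb} do not help here. What is actually available is pointwise lower semicontinuity of $s\mapsto\Psi(\hat\alpha_\varepsilon(s),\ldots)$ along the sequence (cf.~\eqref{eq:Psiliminf}). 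The paper then approximates $\Psi(\alpha_0,\ldots)$ from below by continuous functions on compact subsets of $A_0$ and handles the factor $\|\hat{\dot\alpha}_\varepsilon\|_2$ via the Localization Lemma, writing $\|\hat{\dot\alpha}_\varepsilon(r)\|_2=\sup_i(\varphi_i,\hat{\dot\alpha}_\varepsilon(r))_2$ over a countable dense family in the $L^2$ unit ball. Your reference to~\cite{CrLa} for this step is appropriate, but the reason you give for why it works is not.
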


\begin{proof}

{\bf Step 1}. Let $\varepsilon\in(0,1)$ be fixed and let $(\alpha_\varepsilon,u_\varepsilon,e_\varepsilon,p_\varepsilon)$ be the $\varepsilon$-approximate viscous evolution given by Theorem~\ref{thm:eap_vis_ev}. For all $t\in[0,T]$ we define the function
$$s_\varepsilon^0(t)\coloneqq t+\int_0^t\|\dot\alpha_\varepsilon(r)\|_{H^1}\,\de r+\int_0^t\|\dot e_\varepsilon(r)\|_2\,\de r+\int_0^t\|\dot p_\varepsilon(r)\|_{H^1}\,\de r.$$
The function $s_\varepsilon^0$ is absolutely continuous, increasing, and bijective on $[0,T]$ and we have
\begin{equation}\label{eq:unif_Lip}
s_\varepsilon^0(t_2)-s_\varepsilon^0(t_1)\ge t_2-t_1\quad\text{for all $0\le t_1\le t_2\le T$}.   
\end{equation}
Let $S_\varepsilon\coloneqq s_\varepsilon^0(T)\ge T$ and let $t_\varepsilon^0\colon [0,S_\varepsilon]\to [0,T]$ be the inverse of $s_\varepsilon^0$. By~\eqref{eq:0_est} we can find a constant $C>0$ independent on $\varepsilon$ such that $S_\varepsilon\le C$ for all $\varepsilon\in(0,1)$. Hence, up to a subsequence, $S_\varepsilon\to S$ as $\varepsilon\to 0$, with $S\ge T$. 

We set $\overline S_=\sup_{\varepsilon\in(0,1)}S_\varepsilon\ge S$ and we extend every function $t_\varepsilon^0$ to the interval $[0,\overline S]$ by defining $t_\varepsilon^0(s)\coloneqq T$ for all $s\in[S_\varepsilon,\overline S]$. For all $\varepsilon\in(0,1)$ we define the following rescaled functions on $[0,\overline S]$: for all $s\in[0,\overline S]$
\begin{align*}
&\alpha_\varepsilon^0(s)\coloneqq \alpha_\varepsilon(t_\varepsilon^0(s)), & &w_\varepsilon^0(s)\coloneqq w(t_\varepsilon^0(s)),& & u_\varepsilon^0(s)\coloneqq  u_\varepsilon(t_\varepsilon^0(s)),\\
&e_\varepsilon^0(s)\coloneqq  e_\varepsilon(t_\varepsilon^0(s)),& &p_\varepsilon^0(s)\coloneqq p_\varepsilon(t_\varepsilon^0(s)). & &
\end{align*}
Notice that the functions $t_\varepsilon^0\colon [0,\overline S]\to [0,T]$ are uniformly Lipschitz in $\varepsilon\in(0,1)$ by~\eqref{eq:unif_Lip}, since
$$0\le t_\varepsilon^0(s_2)-t_\varepsilon^0(s_1)\le s_2-s_1\quad\text{for all $0\le s_1\le s_2\le \overline S$}.$$
Hence, there exists a subsequence and a Lipschitz map $t_0\colon [0,\overline S]\to [0,T]$ such that $t_\varepsilon^0\to t_0$ uniformly on $[0, \overline S]$ and weakly* in $W^{1,\infty}((0,\overline S);(0,T))$ as $\varepsilon\to 0$. In particular $t_0(0)=0$, $t_0(s)=T$ for all $s\in[S,\overline S]$, and 
$$0\le t_0(s_2)-t_0(s_1)\le s_2-s_1\quad\text{for all $0\le s_1\le s_2\le \overline S$}.$$
In particular, the map $t_0\colon [0,S]\to [0,T]$ is non decreasing and surjective. Moreover, as $\varepsilon\to 0$ we derive
$$w_\varepsilon^0(s)\to w_0(s)\coloneqq w(t_0(s))\quad\text{in $H^1(\Omega;\R^n)$ for all $s\in[0,S]$}.$$
By using the definitions of $s_\varepsilon^0$ and $t_\varepsilon^0$ we obtain
\begin{equation}\label{eq:1lip}
 \|\alpha_\varepsilon^0(s_2)-\alpha_\varepsilon^0(s_1)\|_{H^1}+\|e_\varepsilon^0(s_2)-e_\varepsilon^0(s_1)\|_2+\|p_\varepsilon^0(s_2)-p_\varepsilon^0(s_1)\|_{H^1}\le s_2-s_1\quad\text{for all $0\le s_1\le s_2\le S$}.
\end{equation}
By Ascoli-Arzela Theorem there exists a quadruple $(\alpha_0,u_0,e_0,p_0)$ of Lipschitz functions from $[0,S]$ into $H^1(\Omega;[0,1])\times \mathcal A$ satifying: for all $s\in[0,S]$ as $\varepsilon\to 0$ 
\begin{align}
& \alpha_\varepsilon^0(s)\rightharpoonup \alpha_0(s)\quad\text{in $H^1(\Omega)$}, & & u_\varepsilon^0(s)\rightharpoonup u_0(s)\quad\text{in $H^1(\Omega;\R^n)$},\label{eq:0con1}\\
& e_\varepsilon^0(s)\rightharpoonup e_0(s)\quad\text{in $L^2(\Omega;\M)$}, & & p_\varepsilon^0(s)\rightharpoonup p_0(s)\quad\text{in $H^1(\Omega;\M)$}.\label{eq:0con2}
\end{align}
In particular, by Remark~\ref{rem:emb}, for all $s\in[0,S]$  as $\varepsilon \to 0$ we have
\begin{align}
\alpha_\varepsilon^0\to \alpha_0\quad\text{in $C^0([0,S];L^\theta(\Omega))$},\qquad p_\varepsilon^0\to p_0\quad\text{in $C^0([0,S];L^\theta(\Omega;\M))$}\label{eq:0Scon}
\end{align}
for all $\theta\in[1,6)$.  Furthermore, if we  combine the uniform Lipschitz estimate~\eqref{eq:1lip} with the weak convergences~\eqref{eq:0con1}--\eqref{eq:0con2}, we get
$$\|\alpha_0(s_2)-\alpha_0(s_1)\|_{H^1}+\|e_0(s_2)-e_0(s_1)\|_2+\|p_0(s_2)-p_0(s_1)\|_{H^1}\le s_2-s_1\quad\text{for all $0\le s_1\le s_2\le S$},$$
which yields
$$\|\dot\alpha_0(s)\|_{H^1}+\|\dot e_0(s)\|_2+\|\dot p_0(s)\|_{H^1}\le 1\quad\text{for a.e.\ $s\in[0,S]$}.$$
Moreover, the initial conditions~\eqref{eq:0_in-con} hold by construction.

{\bf Step 2.} Let us prove that $(\alpha_0,u_0,e_0,p_0,t_0)$ is a Balanced Viscosity quasistatic solution on $[0,S]$. By~\eqref{eq:0con1} and~\eqref{eq:0con2} we derive that $(u_0(s),e_0(s),p_0(s))\in\mathcal A(w_0(s))$ and $\div\C e_0(s)=0$ in $\mathcal D'(\Omega;\R^n)$ for all $s\in[0,S]$, and that for a.e.\ $x\in\Omega$ the map $s\mapsto\alpha_0(s,x)$ in non increasing on $[0,S]$. Hence $(\alpha_0,u_0,e_0,p_0,t_0)$ satisfies~$(ev0)$ and~$(ev1)$.  
Let us fix $s\in[0,S]$ and $q\in H^1(\Omega;\M)$. Then $t_\varepsilon(s)\in [0,T]$ for all $s\in[0,S]$ and $\varepsilon\in(0,1)$. Hence, thanks to $(ev2)_\varepsilon$, for all $s\in[0,S]$ we get
\begin{equation*}
\mathcal H(q)\ge (\C e_\varepsilon^0(s),q)_2-2(\B(\alpha_\varepsilon^0(s))p_\varepsilon^0(s),q)_2-2(\nabla p_\varepsilon^0(s),\nabla q)_2\quad\text{for all $q\in H^1(\Omega;\M)$}.
\end{equation*}
By sending $\varepsilon\to 0$ and exploiting the convergences~\eqref{eq:0con1}--\eqref{eq:0Scon}, for all $s\in[0,S]$ we derive
\begin{equation*}
\mathcal H(q)\ge (\C e_0(s),q)_2-2(\B(\alpha_0(s))p_0(s),q)_2-2(\nabla p_0(s),\nabla q)_2\quad\text{for all $q\in H^1(\Omega;\M)$}.
\end{equation*}
Therefore, also condition~$(ev2)$ is satisfied.

{\bf Step 3.} We introduce the functions $s_0^+,s_0^-\colon [0,T]\to [0,S]$, defined by
\begin{align*}
&s_0^+(t)\coloneqq \inf\{s\in[0,S]:t_0(s)>t\}\quad\text{for $t\in[0,T)$},& &s_0^+(T)=S,\\
&s_0^-(t)\coloneqq \sup\{s\in[0,S]:t_0(s)<t\}\quad\text{for $t\in(0,T]$},& &s_0^+(0)=0.
\end{align*}
Then
\begin{equation}\label{eq:se0}
s_0^-(t)\le \liminf_{\varepsilon\to 0}s_\varepsilon^0(t)\le \limsup_{\varepsilon\to 0}s_\varepsilon^0(t)\le s_0^+(t)\quad\text{for all $t\in[0,T]$},
\end{equation}
and
$$t_0(s_0^-(t))=t=t_0(s_0^+(t))\quad\text{for all $t\in[0,T]$},\qquad s_0^-(t_0(s))\le s\le s_0^+(t_0(s))\quad\text{for all $s\in[0,S]$}.$$
In particular, the set
$$S_0\coloneqq \{t\in[0,T]\,:\,s_0^-(t)<s_0^+(t)\}$$
is at most countable and
$$U_0\coloneqq \{s\in[0,S]\,:\,\text{$t_0$ is constant in a neighborhood of $s$}\}=\bigcup_{t\in S_0}(s_0^-(t),s_0^+(t)).$$

Thanks to~\eqref{eq:se0}, for all $t\in[0,T]\setminus S_0$ we have that $s_\varepsilon^0(t)\to s_0^-(t)=s_0^+(t)$ as $\varepsilon\to 0$. Hence, by exploting~\eqref{eq:1lip}--\eqref{eq:0con2} we deduce that for all $t\in[0,T]\setminus S_0$ as $\varepsilon \to 0$ 
\begin{align}
& \alpha_\varepsilon(t)\rightharpoonup \alpha_0(s_0^-(t))\quad\text{in $H^1(\Omega)$}, & & u_\varepsilon(t)\rightharpoonup u_0(s_0^-(t))\quad\text{in $H^1(\Omega;\R^n)$},\label{eq:weakcon1}\\
& e_\varepsilon(t)\rightharpoonup e_0(s_0^-(t))\quad\text{in $L^2(\Omega;\M)$}, & & p_\varepsilon(t)\rightharpoonup p_0(s_0^-(t))\quad\text{in $H^1(\Omega;\M)$},\label{eq:weakcon2}
\end{align}
and by Remark~\ref{rem:emb} for all $t\in[0,T]\setminus S_0$ as $\varepsilon \to 0$ 
\begin{align*}
\alpha_\varepsilon(t)\to \alpha_0(s_0^-(t))\quad\text{in $L^\theta(\Omega)$}, & & p_\varepsilon(t)\to p_0(s_0^-(t))\quad\text{in $L^\theta(\Omega;\M)$}
\end{align*}
for all $\theta\in[1,6)$.

We want to show~$(ev3)$ and we define
$$A_0\coloneqq \{s\in[0,S]\,:\, \Psi(\alpha_0(s),e_0(s),p_0(s))>0\}.$$
Therefore, in order to prove~$(ev3)$ it is enough to show that $A_0\subset U_0$. Since for all $\beta\in H^1(\Omega)$ the map $s\mapsto\langle -\partial_\alpha\mathcal E(\alpha_0(s),e_0(s),p_0(s)),\beta\rangle$ is continuous on $[0,S]$, we derive that the map $s\mapsto \Psi(\alpha_0(s),e_0(s),p_0(s))$ is lower semicontinous on $[0,T]$. In particular, the set $A_0$ is open on $[0,T]$. Moreover, for all $\beta\in H^1(\Omega)$ with $\beta\le 0$ we have
$$\langle -\partial_\alpha\mathcal E(\alpha_0(s),e_0(s),p_0(s)),\beta\rangle\le \liminf_{\varepsilon\to 0}\langle -\partial_\alpha\mathcal E(\alpha_\varepsilon^0(s),e_\varepsilon^0(s),p_\varepsilon^0(s)),\beta\rangle\le \liminf_{\varepsilon\to 0}\Psi(\alpha_\varepsilon^0(s),e_\varepsilon^0(s),p_\varepsilon^0(s)),$$
which gives
\begin{equation}\label{eq:Psiliminf}
\Psi(\alpha_0(s),e_0(s),p_0(s))\le \liminf_{\varepsilon\to 0}\Psi(\alpha_\varepsilon^0(s),e_\varepsilon^0(s),p_\varepsilon^0(s)).    
\end{equation}

Let us define
$$D_0\coloneqq \{s\in (0,S)\,:\, \dot t_0(s)=0\},$$
and let us show that
\begin{equation}\label{eq:limsupte}
\limsup_{\varepsilon\to 0}\dot t_\varepsilon^0(s)>0\quad\text{for a.e.\ $s\in[0,S]\setminus D_0$}.    
\end{equation}
On the contrary, since $t_\varepsilon^0$ is non decreasing, we could find a measurable set $A\subset (0,S)\setminus D_0$ with positive measure such that
$$\lim_{\varepsilon\to 0}\dot t_\varepsilon^0(s)=0\quad\text{for all $s\in A$}.$$
Since every function $t_\varepsilon^0$ is 1-Lipschitz on $[0,S]$, by applying the Dominated Convergence Theorem we conclude that
$$\lim_{\varepsilon\to 0}\int_A \dot t_\varepsilon^0(r)\,\de r=0.$$
On the other hand, we have that $t_\varepsilon^0\rightharpoonup t_0$ weakly* in $W^{1,\infty}((0,S);(0,T))$, which gives
$$\lim_{\varepsilon\to 0}\int_A \dot t_\varepsilon^0(r)\,\de r=\int_A \dot t_0(r)\,\de r>0,$$
since $\dot t_0(s)>0$ for a.e.\ $s\in [0,S]\setminus D_0$. This leads to a contradiction and proves~\eqref{eq:limsupte}.

For a.e.\ $s\in[0,S]\setminus D_0$ we derive
$$0\le \Psi(\alpha_0(s),e_0(s),p_0(s))\le \liminf_{\varepsilon\to 0}\Psi(\alpha_\varepsilon^0(s),e_\varepsilon^0(s),p_\varepsilon^0(s))=\liminf_{\varepsilon\to 0}\varepsilon\|\dot\alpha_\varepsilon(t_\varepsilon^0(s))\|_2=\liminf_{\varepsilon\to 0}\varepsilon\frac{\|\dot\alpha_\varepsilon^0(s)\|_2}{\dot t_\varepsilon^0(s)}=0$$
by~\eqref{eq:Psi_eps},~\eqref{eq:1lip}, and~\eqref{eq:limsupte}. Therefore, we have
$$\Psi(\alpha_0(s),e_0(s),p_0(s))=0\quad\text{for a.e.\ $s\in[0,S]\setminus D_0$},$$
which gives that $A_0\subset D_0$, i.e., 
$$\dot t_0(s)=0\quad\text{for a.e.\ $s\in A_0$}.$$
Since $A_0$ is open on $[0,T]$, every $s\in A_0$ has an open neighborhood where $\dot t_0=0$. Since $t_0$ is Lipschitz, we conclude that $A_0\subset U_0$. This gives~$(ev3)$.

{\bf Step 4.} By Proposition~\ref{prop:eq0}, in order to show that $(\alpha_0,u_0,e_0,p_0,t_0)$ is a Balanced Viscosity quasistatic evolutio, it is enough to prove the energy inequality~$(ev4'')$. By using~$(ev4)_\varepsilon$ in $t=t_\varepsilon^0(s)$ together with~\eqref{eq:Psi} and the change of variable formula, for all $\varepsilon\in(0,1)$ and $s\in[0,S]$ we get
\begin{align}
&\mathcal E(\alpha_\varepsilon^0(s),e_\varepsilon^0(s),p_\varepsilon^0(s))+\int_0^s\mathcal H(\dot p_\varepsilon^0(r))\,\de r+\int_0^s\|\dot\alpha_\varepsilon^0(r)\|_2\Psi(\alpha_\varepsilon^0(r),e_\varepsilon^0(r),p_\varepsilon^0(r))\,\de r\nonumber\\
 &=\mathcal E(\alpha^0,e^0,p^0)+\int_0^{t_\varepsilon^0(s)}(\C e_\varepsilon(r),\EE \dot w(r))_2\,\de r.\label{eq:eneqe}
 \end{align}
 By the weak convergences in~\eqref{eq:0con1} and~\eqref{eq:0con2} we get
 \begin{align*}
&\mathcal E(\alpha_0(s),e_0(s),p_0(s))\le\liminf_{\varepsilon\to 0}\mathcal E(\alpha_\varepsilon^0(s),e_\varepsilon^0(s),p_\varepsilon^0(s)).
 \end{align*}
Moreover, by Remark~\ref{rem:lsc} and the fact that $\dot p_\varepsilon^0\rightharpoonup \dot p_0$ weakly* in $L^\infty((0,S);H^1(\Omega;\M))$ as $\varepsilon\to 0$, we have
\begin{align*}
\int_0^s\mathcal H(\dot p_0(r))\,\de r\le\liminf_{\varepsilon\to 0}\int_0^s\mathcal H(\dot p_\varepsilon^0(r))\,\de r,
 \end{align*}

We want to show that
\begin{equation}\label{eq:Psilsc}
\int_{A_0}\|\dot\alpha_0(r)\|_2\Psi(\alpha_0(r),e_0(r),p_0(r))\,\de r\le\liminf_{\varepsilon\to 0}\int_{A_0}\|\dot\alpha_\varepsilon^0(r)\|_2\Psi(\alpha_\varepsilon^0(r),e_\varepsilon^0(r),p_\varepsilon^0(r))\,\de r.   
\end{equation}
Let $C\subset A_0$ be a compact set and let $\psi\colon C\to [0,\infty)$ be a continuous function such that
$$\Psi(\alpha_0(r),e_0(r),p_0(r))>\psi(r)\quad\text{for all $r\in C$}.$$
By~\eqref{eq:Psiliminf} and the compactness of $C$, we deduce that for all $\varepsilon>0$ small enough
$$\Psi(\alpha_\varepsilon^0(r),e_\varepsilon^0(r),p_\varepsilon^0(r))>\psi(r)\quad\text{for all $r\in C$}.$$
By a standard approximation argument from below, since $s\mapsto \Psi(\alpha_0(r),e_0(r),p_0(r)) is lower semicontinuous$, in order to prove~\eqref{eq:Psilsc} it suffices to show that
\begin{equation}\label{eq:psiliminf}
\int_C\|\dot\alpha_0(r)\|_2\psi(r)\,\de r\le\liminf_{\varepsilon\to 0}\int_C\|\dot\alpha_\varepsilon^0(r)\|_2\psi(r)\,\de r
\end{equation}
for all compact set $C\subset A_0$ and every  continuous function $\psi\colon C\to [0,\infty)$. To this end, let
$(\varphi_i)_i\subset C_c^\infty(\Omega)$ be a dense sequence in the unit ball of $L^2(\Omega)$. Since
$$\|\dot\alpha_\varepsilon^0(r)\|_2=\sup_i(\varphi_i,\dot\alpha_\varepsilon^0(r))_2\quad\text{for all $r\in C$},$$
by the Localization Lemma (see, e.g.,~\cite[Lemma 2.3.2]{Bu}) we have
$$\int_C\|\dot\alpha_\varepsilon^0(r)\|_2\psi(r)\,\de r=\sup\sum_{i=1}^k\int_{C_i}(\varphi_i,\dot\alpha_\varepsilon^0(r))_2\psi(r)\,\de r,$$
where the supremum is taken over all integers $k$ and over all finite Borel partitions $C_1,\dots,C_k$ of $C$. For all $i$ the real-valued functions $r\mapsto (\varphi_i,\alpha_\varepsilon^0(r))_2$ are equi-Lipschitz on $[0,S]$ and converge to $r\mapsto (\varphi_i,\alpha_0(r))_2$ for all $r\in[0,S]$ as $\varepsilon\to 0$. Hence the functions $r\mapsto (\varphi_i,\dot\alpha_\varepsilon^0(r))_2$ converge to $r\mapsto (\varphi_i,\dot\alpha_0(r))_2$ weakly* in $L^\infty(0,S)$. It follows that
$$\sum_{i=1}^k\int_{C_i}(\varphi_i,\dot\alpha_0(r))_2\psi(r)\,\de r=\lim_{\varepsilon\to 0}\sum_{i=1}^k\int_{C_i}(\varphi_i,\dot\alpha_\varepsilon^0(r))_2\psi(r)\,\de r\le\liminf_{\varepsilon\to 0}\int_C\|\dot\alpha_\varepsilon^0(r)\|_2\psi(r)\,\de r.$$
This implies~\eqref{eq:psiliminf}, which gives~\eqref{eq:Psilsc}. 

It remains to study the limit as $\varepsilon\to 0$ of the last term in the right hand side of~\eqref{eq:eneqe}. By the Dominated Convergence Theorem and the weak convergences of~\eqref{eq:weakcon1} and~\eqref{eq:weakcon2} we have
$$\lim_{\varepsilon\to 0}\int_0^{t_\varepsilon^0(s)}(\C e_\varepsilon(r),\EE \dot w(r))_2\,\de r=\int_0^{t_0(s)}(\C e_0(s_0^-(r)),\EE \dot w(r))_2\,\de r.$$
Since $t_0$ is Lipschitz and $w$ satisfies~\eqref{eq:w}, we get that $w_0\in AC([0,S];H^1(\Omega;\R^n)$ and
$$\EE \dot w_0(r)=\EE \dot w(t_0(r))\dot t_0(r)\quad\text{for a.e.\ $r\in[0,S]$}.$$
Hence, by the area formula we derive
$$\int_0^{t_0(s)}(\C e_0(s_0^-(r)),\EE \dot w(r))_2\,\de r=\int_0^s(\C e_0(s_0^-(t_0(r))),\EE \dot w(t_0(r)))_2\dot t_0(r)\,\de r=\int_0^s(\C e_0(r),\EE \dot w_0(r))_2\,\de r,$$
because $s_0^-(t_0(r))=r$ for all $r\in[0,S]\setminus U_0$ and $\dot t_0(r)=0$ for all $r\in U_0$. This implies the energy inequality~$(ev4'')$, which concludes the proof.
\end{proof}

\begin{acknowledgements}
M.C. is a member of {\em Gruppo Nazionale per l'Analisi Matematica, la Probabilità e le loro Applicazioni} (GNAMPA) of {\em Istituto Nazionale di Alta Matematica} (INdAM). M.C.\ acknowledges the support of the project STAR PLUS 2020 - Linea 1 (21-UNINA-EPIG-172) ``New perspectives in the Variational modeling of Continuum Mechanics'', and of the INdAM - GNAMPA Project ``Equazioni differenziali alle derivate parziali di tipo misto o dipendenti da campi di vettori'' (Project number CUP\_E53C22001930001). V.C.\ has been funded by the MIUR - PRIN project 2017BTM7SN \emph{Variational Methods for stationary and evolution problems with singularities and interfaces} and by SEED PNR Project VarADeRo. 
\end{acknowledgements}

\end{document}